\newcommand{\bl}[1]{\textcolor{blue}{#1}}
\definecolor{mypurple}{rgb}{.4,.0,.5}
\newcommand{\prp}[1]{\textcolor{mypurple}{#1}}
\def\y{{\bf y}}
\def\x{{\bf x}}
\def\x{{\mathbf x}}
\def\u{{\bf u}}
\def\x{{\bf x}}
\def\y{{\bf y}}
\def\h{{\bf h}}
\def\be{\begin{equation}}
\def\ee{\end{equation}}
\def\ba{\left[\begin{array}}
\def\ea{\end{array}\right]}
\def\u{{\bf u}}
\def\x{{\bf x}}
\def\y{{\bf y}}
\def\1{{\bf 1}}
\def\0{{\bf 0}}
\def\calX{{\cal X}}
\def\mR{{\mathbb R}}
\def\mE{{\mathbb E}}
\def\lp{\left (}
\def\rp{\right )}
\newtheorem{theorem}{Theorem}
\newtheorem{corollary}{Corollary}
\begin{document}

\begin{singlespace}

\title {Generic and lifted probabilistic comparisons -- max replaces minmax %A tight variant of Gordon's escape through a mesh theorem
%\footnote{ This work was supported in
%part.}
}
\author{
\textsc{Mihailo Stojnic
\footnote{e-mail: {\tt flatoyer@gmail.com}} }}
\date{}
\maketitle

\centerline{{\bf Abstract}} \vspace*{0.1in}

In this paper we introduce a collection of powerful statistical comparison results. We first present the results that we obtained while developing a general comparison concept. After that we introduce a separate lifting procedure that is a comparison concept on its own. We then show how in certain scenarios the lifting procedure basically represents a substantial upgrade over the general strategy. We complement the introduced results with a fairly large collection of numerical experiments that are in an overwhelming agreement with what the theory predicts. We also show how many well known comparison results (e.g. Slepian's max and Gordon's minmax principle) can be obtained as special cases. Moreover, it turns out that the minmax principle can be viewed as a single max principle as well. The range of applications is enormous. It starts with revisiting many of the results we created in recent years in various mathematical fields and recognizing that they are fully self-contained as their starting blocks are specialized variants of the concepts introduced here. Further upgrades relate to core comparison extensions on the one side and more practically oriented modifications on the other. Those that we deem the most important we discuss in several separate companion papers to ensure preserving the introductory elegance and simplicity of what is presented here.

\vspace*{0.25in} \noindent {\bf Index Terms: Random processes; comparison principles, lifting}.

\end{singlespace}

%%%%%%%%%%%%%%%%%%%%%%%%%%%%%%%%%%%%%%%%%%%%%%%%%%%%%%%%%%%%%%%%%
\section{Introduction}
\label{sec:back}
%%%%%%%%%%%%%%%%%%%%%%%%%%%%%%%%%%%%%%%%%%%%%%%%%%%%%%%%%%%%%%%%%

In this paper we study comparisons of random processes. This topic has a fairly rich history and is among the most successful tools in modern probability and statistics. The range of applications is also quite large and our own interest started through such an application. Namely, while studying many random optimization problems we characterized their performance by developing a collection of novel probabilistic mechanisms that used as starting blocks some forms of random processes comparisons (see, e.g. \cite{StojnicISIT2010binary,StojnicCSetam09,StojnicUpper10,StojnicCSetamBlock09,StojnicICASSP10knownsupp} and references therein). After initially recognizing the importance of the role that the strong random comparisons can play, we eventually went much further and designed much stronger concepts through which we were able to achieve substantial improvements in studying many problems previously essentially untouchable by any other tool (more on this can be found in, e.g. \cite{StojnicLiftStrSec13,StojnicMoreSophHopBnds10,StojnicRicBnds13} and references therein).

As mentioned above, many great results have been obtained in the theory of comparisons of random processes. In our view, two the most typical and influential are the Slepian's max \cite{Slep62} and the Gordon's minmax \cite{Gordon85} principle (see also \cite{Sudakov71,Fernique74,Fernique75,Kahane86}). What makes these results particularly unique is that they are fairly self-contained and are basically derived without relying on much of other knowledge. In fact, the line of work before the Slepian's is relatively short and for most parts goes through e.g. \cite{Schlafli858,Placket54,Chover61}. Gordon's work, on the other hand, is pretty much the first upgrade after Slepian in the direction that it pursues. Of course many applications and extensions of these principles are known and we stop short of reviewing these here as our topic of study won't necessarily go into these directions (for more extensive studies in these directions we instead refer to classical works, e.g. \cite{Adler90,Lifshits85,LedTal91,Tal05}). Namely, Slepian's and Gordon's principles by definition in its core deal with the extrema of the random processes. Here, we focus on some fundamentals of comparisons that don't necessarily deal with the extrema but rather with general functions of random processes (as it will turn out, the special cases of the results that we develop can be used to deal with the random processes extrema as well and in later sections we will discuss how it can be done).

We will split the presentation into two main parts. In the first part we will introduce a general comparison principle. This principle will turn out to be a very powerful tool and, as we will show, it will contain as special cases both, the Slepian's max and the Gordon's minmax principle. As such it can then also serve as a replacing starting block in many of our earlier works where the max and minmax principles were used (see, e.g. \cite{StojnicISIT2010binary,StojnicCSetam09,StojnicUpper10,StojnicCSetamBlock09,StojnicICASSP10knownsupp}) which in turn makes all these results basically fully self-contained. On the other hand, in the second part we will introduce a lifting procedure which is a comparison principle on its own as well. However, in certain scenarios it will turn out that the two principles can be connected to each other with the lifting one often acting as a substantial upgrade. Moreover, it will turn out that the special cases of the lifting principle can also be used as starting blocks in many of our other earlier works (see, e.g. \cite{StojnicLiftStrSec13,StojnicMoreSophHopBnds10,StojnicRicBnds13}) making those fully self-contained as well.

%%%%%%%%%%%%%%%%%%%%%%%%%%%%%%%%%%%%%%%%%%%%%%%%%%%%%%%%%%%%%%%%%
\section{General concepts}
\label{sec:gencon}
%%%%%%%%%%%%%%%%%%%%%%%%%%%%%%%%%%%%%%%%%%%%%%%%%%%%%%%%%%%%%%%%%

For a given set $\calX=\{\x^{(1)},\x^{(2)},\dots,\x^{(l)}\}$, where $\x^{(i)}\in S^{n-1},1\leq i\leq l$, and $S^{n-1}$ is the unit Euclidean sphere in $\mR^n$, we will be interested in the following function
\begin{eqnarray}\label{eq:genanal1}
 f(G,\calX,\beta,s)= \frac{1}{\beta\sqrt{n}} \log\lp \sum_{i=1}^{l}e^{\beta\lp s\|
 G\x^{(i)}\|_2\rp} \rp,
\end{eqnarray}
where $s\in\{-1,1\}$ and $\beta>0$ is a real parameter. A typical behavior of this function we will study through a Gaussian framework, i.e. we will assume that $G\in \mR^{m\times n}$ is an $(m\times n)$ dimensional matrix with i.i.d. standard normal components. In such a context (and especially so if $m$, $n$, and $l$ are large) the expected value of the above function is usually its most relevant statistical characteristic. We will denote this expected value by $\xi(\calX,\beta,s)$ and set
\begin{eqnarray}\label{eq:genanal2}
\xi(\calX,\beta,s)\triangleq \mE_G f(G,\calX,\beta,s)= \mE_G\frac{1}{\beta\sqrt{n}} \log\lp \sum_{i=1}^{l}e^{\beta\lp s\|
 G\x^{(i)}\|_2\rp} \rp.
\end{eqnarray}
To study $\xi(\calX,\beta,s)$ we will employ the following interpolating function $\psi(\cdot)$
\begin{eqnarray}\label{eq:genanal3}
\psi(\calX,\beta,s,t)= \mE_{G,\u^{(2)},\h} \frac{1}{\beta\sqrt{n}} \log\lp \sum_{i=1}^{l}e^{\beta\lp s\|\sqrt{t}
 G\x^{(i)}+\sqrt{1-t}\u^{(2)}\|_2+\sqrt{1-t}\h^T\x^{(i)}\rp} \rp,
\end{eqnarray}
where $\u^{(2)}$ and $\h$ are $m$ and $n$ dimensional vectors of i.i.d standard normals, respectively ($G$, $\u^{(2)}$, and $\h$ are all assumed to be independent of each other; $\mE$ denotes the expectation; although it will typically be clear from the context with respect to what randomness the expectations are being computed, we will still often in the subscript emphasize the underlying randomness). It is not that hard to see that $\xi(\calX,\beta,s)=\psi(\calX,\beta,s,1)$. Also, $\psi(\calX,\beta,s,0)$ is an object typically much easier to handle than $\psi(\calX,\beta,s,1)$. Below, we will try to connect $\psi(\calX,\beta,s,1)$ to $\psi(\calX,\beta,s,0)$ which will then automatically connect $\xi(\calX,\beta,s)$ to $\psi(\calX,\beta,s,0)$. To facilitate writing we also set
\begin{eqnarray}\label{eq:genanal4}
\u^{(i,1)} & =  & G\x^{(i)} \nonumber \\
\u^{(i,3)} & =  & \h^T\x^{(i)}.
\end{eqnarray}
Clearly, we then have
\begin{eqnarray}\label{eq:genanal5}
\u_j^{(i,1)} & =  & G_{j,1:n}\x^{(i)},1\leq j\leq m,
\end{eqnarray}
where $\u_j^{(i,1)}$ is the $j$-th component of $\u^{(i,1)}$ and $G_{j,1:n}$ is simply the $j$-th row of $G$. It is also rather obvious that for any fixed $i$, the elements of $\u^{(i,1)}$, $\u^{(2)}$, and $\u^{(i,3)}$ are i.i.d. standard normals. We can then rewrite (\ref{eq:genanal3}) in the following way
\begin{eqnarray}\label{eq:genanal6}
\psi(\calX,\beta,s,t) & = & \mE_{G,\u^{(2)},\h} \frac{1}{\beta\sqrt{n}} \log\lp \sum_{i=1}^{l}e^{\beta\lp s\|\sqrt{t}
 G\x^{(i)}+\sqrt{1-t}\u^{(2)}\|_2+\sqrt{1-t}\h^T\x^{(i)}\rp} \rp \nonumber \\
& = & \mE_{G,\u^{(2)},\h} \frac{1}{\beta\sqrt{n}} \log\lp \sum_{i=1}^{l}e^{\beta\lp s\|\sqrt{t}
 \u^{(i,1)}+\sqrt{1-t}\u^{(2)}\|_2+\sqrt{1-t}\u^{(i,3)}\rp} \rp \nonumber \\
& = & \mE_{\u^{(i,1)},\u^{(2)},\u^{(i,3)}} \frac{1}{\beta\sqrt{n}} \log\lp \sum_{i=1}^{l}e^{\beta\lp s\sqrt{\sum_{j=1}^{m}\lp\sqrt{t}\u_j^{(i,1)}+\sqrt{1-t}\u_j^{(2)}\rp^2}+\sqrt{1-t}\u^{(i,3)}\rp} \rp.
\end{eqnarray}
In what follows we will also often find helpful in facilitating writing the following
\begin{eqnarray}\label{eq:genanal7}
B^{(i)} & \triangleq &  \sqrt{\sum_{j=1}^{m}\lp\sqrt{t}\u_j^{(i,1)}+\sqrt{1-t}\u_j^{(2)}\rp^2} \nonumber \\
A^{(i)} & \triangleq &  e^{\beta(sB^{(i)}+\sqrt{1-t}\u^{(i,3)})}\nonumber \\
Z & \triangleq & \sum_{i=1}^{l}e^{\beta\lp s\sqrt{\sum_{j=1}^{m}\lp\sqrt{t}\u_j^{(i,1)}+\sqrt{1-t}\u_j^{(2)}\rp^2}+\sqrt{1-t}\u^{(i,3)}\rp}= \sum_{i=1}^{l} A^{(i)}.
\end{eqnarray}
From (\ref{eq:genanal6}) and (\ref{eq:genanal7}) we obviously have
\begin{eqnarray}\label{eq:genanal8}
\psi(\calX,\beta,s,t) & = &  \mE_{\u^{(i,1)},\u^{(2)},\u^{(i,3)}} \frac{1}{\beta\sqrt{n}} \log(Z).
\end{eqnarray}
Below we show that $\psi(\calX,\beta,s,t)$ is a decreasing function of $t$. To that end we have
\begin{eqnarray}\label{eq:genanal9}
\frac{d\psi(\calX,\beta,s,t)}{dt} & = &  \mE_{\u^{(i,1)},\u^{(2)},\u^{(i,3)}} \frac{1}{\beta\sqrt{n}} \log\lp \sum_{i=1}^{l}e^{\beta\lp s\sqrt{\sum_{j=1}^{m}\lp\sqrt{t}\u_j^{(i,1)}+\sqrt{1-t}\u_j^{(2)}\rp^2}+\sqrt{1-t}\u^{(i,3)}\rp} \rp\nonumber \\
& = &  \mE_{\u^{(i,1)},\u^{(2)},\u^{(i,3)}} \frac{1}{Z\beta\sqrt{n}} \frac{d\lp \sum_{i=1}^{l}e^{\beta\lp s\sqrt{\sum_{j=1}^{m}\lp\sqrt{t}\u_j^{(i,1)}+\sqrt{1-t}\u_j^{(2)}\rp^2}+\sqrt{1-t}\u^{(i,3)}\rp} \rp }{dt}\nonumber \\
& = &  \mE_{\u^{(i,1)},\u^{(2)},\u^{(i,3)}} \frac{1}{Z\sqrt{n}}  \sum_{i=1}^{l}A^{(i)}\lp s\frac{dB^{(i)}}{dt}-\frac{\u^{(i,3)}}{2\sqrt{1-t}}\rp.\nonumber \\
\end{eqnarray}
From (\ref{eq:genanal7}) we also find
\begin{equation}\label{eq:genanal10}
\frac{dB^{(i)}}{dt} =   \frac{d\sqrt{\sum_{j=1}^{m}\lp\sqrt{t}\u_j^{(i,1)}+\sqrt{1-t}\u_j^{(2)}\rp^2}}{dt}=   \frac{\sum_{j=1}^{m}\lp(\u_j^{(i,1)})^2-(\u_j^{(2)})^2+\u_j^{(i,1)}\u_j^{(2)}\lp\frac{\sqrt{1-t}}{\sqrt{t}}-\frac{\sqrt{t}}{\sqrt{1-t}}\rp \rp }{2B^{(i)}}.
\end{equation}
A combination of (\ref{eq:genanal9}) and (\ref{eq:genanal10}) gives
\begin{eqnarray}\label{eq:genanal11}
\frac{d\psi(\calX,\beta,s,t)}{dt}
& = &  \mE_{\u^{(i,1)},\u^{(2)},\u^{(i,3)}} \frac{s}{2\sqrt{n}}\sum_{j=1}^{m} \sum_{i=1}^{l} \frac{ A^{(i)}\lp(\u_j^{(i,1)})^2-(\u_j^{(2)})^2+\u_j^{(i,1)}\u_j^{(2)}\lp\frac{\sqrt{1-t}}{\sqrt{t}}-\frac{\sqrt{t}}{\sqrt{1-t}}\rp \rp }{ZB^{(i)}}\nonumber \\
& & - \mE_{\u^{(i,1)},\u^{(2)},\u^{(i,3)}} \frac{1}{2\sqrt{n}}  \sum_{i=1}^{l}
\frac{A^{(i)}\u^{(i,3)}}{Z\sqrt{1-t}}.\nonumber \\
\end{eqnarray}
We will handle separately all the terms appearing in the above sums. These include $\mE_{\u^{(i,1)},\u^{(2)},\u^{(i,3)}}  \frac{ A^{(i)}(\u_j^{(i,1)})^2}{ZB^{(i)}}$, $\mE_{\u^{(i,1)},\u^{(2)},\u^{(i,3)}}  \frac{ A^{(i)}(\u_j^{(2)})^2}{ZB^{(i)}}$, $\mE_{\u^{(i,1)},\u^{(2)},\u^{(i,3)}}  \frac{ A^{(i)}\u_j^{(i,1)}\u_j^{(2)}}{ZB^{(i)}}$, and $\mE_{\u^{(i,1)},\u^{(2)},\u^{(i,3)}}  \frac{ A^{(i)}\u^{(i,3)}}{Z}$. We will start with computing $\mE_{\u^{(i,1)},\u^{(2)},\u^{(i,3)}}  \frac{ A^{(i)}\u_j^{(i,1)}\u_j^{(2)}}{ZB^{(i)}}$.

%%%%%%%%%%%%%%%%%%%%%%%%%%%%%%%%%%%%%%%%%%%%%%%%%%%%%%%%%%%%%%%%%%%%%%%%
\subsection{Handling $\mE_{\u^{(i,1)},\u^{(2)},\u^{(i,3)}}  \frac{ A^{(i)}\u_j^{(i,1)}\u_j^{(2)}}{ZB^{(i)}}$}
\label{sec:hand1}
%%%%%%%%%%%%%%%%%%%%%%%%%%%%%%%%%%%%%%%%%%%%%%%%%%%%%%%%%%%%%%%%%%%%%%%%

Below we study $\mE_{\u^{(i,1)},\u^{(2)},\u^{(i,3)}}  \frac{ A^{(i)}\u_j^{(i,1)}\u_j^{(2)}}{ZB^{(i)}}$. We will present two strategies how to handle $\mE_{\u^{(i,1)},\u^{(2)},\u^{(i,3)}}  \frac{ A^{(i)}\u_j^{(i,1)}\u_j^{(2)}}{ZB^{(i)}}$.

\textbf{\underline{\emph{1) Fixing $\u^{(i,1)}$}}}

We start with the following observation
\begin{equation}\label{eq:genanal12}
\mE_{\u^{(i,1)},\u^{(2)},\u^{(i,3)}}  \frac{ A^{(i)}\u_j^{(i,1)}\u_j^{(2)}}{ZB^{(i)}} =
\mE\lp\sum_{p=1,p\neq i}^{l} \mE (\u_j^{(i,1)}\u_j^{(p,1)})\frac{d}{d\u_j^{(p,1)}}\lp \frac{ A^{(i)}\u_j^{(2)}}{ZB^{(i)}}\rp
+\mE (\u_j^{(i,1)}\u_j^{(i,1)})\frac{d}{d\u_j^{(i,1)}}\lp \frac{ A^{(i)}\u_j^{(2)}}{ZB^{(i)}}\rp\rp,
\end{equation}
where we utilized Gaussian integration by parts. Since $\mE (\u_j^{(i,1)}\u_j^{(p,1)})=(\x^{(i)})^T\x^{(p)}$ we easily also have
\begin{equation}\label{eq:genanal13}
\mE_{\u^{(i,1)},\u^{(2)},\u^{(i,3)}}  \frac{ A^{(i)}\u_j^{(i,1)}\u_j^{(2)}}{ZB^{(i)}} =\mE\lp
\sum_{p=1,p\neq i}^{l} (\x^{(i)})^T\x^{(p)}\frac{d}{d\u_j^{(p,1)}}\lp \frac{ A^{(i)}\u_j^{(2)}}{ZB^{(i)}}\rp
+(\x^{(i)})^T\x^{(i)}\frac{d}{d\u_j^{(i,1)}}\lp \frac{ A^{(i)}\u_j^{(2)}}{ZB^{(i)}}\rp\rp.
\end{equation}
For $p\neq i$ we further have
\begin{eqnarray}\label{eq:genanal14}
\frac{d}{d\u_j^{(p,1)}}\lp \frac{ A^{(i)}\u_j^{(2)}}{ZB^{(i)}}\rp=\frac{ A^{(i)}\u_j^{(2)}}{B^{(i)}}\frac{d}{d\u_j^{(p,1)}}\lp \frac{1}{Z}\rp
=-\frac{ A^{(i)}\u_j^{(2)}}{Z^2B^{(i)}}\frac{dZ}{d\u_j^{(p,1)}}=-\frac{ A^{(i)}\u_j^{(2)}}{Z^2B^{(i)}}\frac{d \sum_{i=1}^{l }A^{(i)}}{d\u_j^{(p,1)}}.
\end{eqnarray}
From (\ref{eq:genanal7}) we find
\begin{equation}\label{eq:genanal15}
\frac{d B^{(p)}}{d\u_j^{(p,1)}} =  \frac{2\u_j^{(p,1)}t+2\sqrt{t}\sqrt{1-t}\u_j^{(2)}}{2B^{(p)}}
= \frac{\u_j^{(p,1)}\sqrt{t}+\sqrt{1-t}\u_j^{(2)}}{B^{(p)}}\sqrt{t},
\end{equation}
and
\begin{equation}\label{eq:genanal16}
\frac{d A^{(p)}}{d\u_j^{(p,1)}} = \beta s A^{(p)} \frac{d B^{(p)}}{d\u_j^{(p,1)}}= \beta s A^{(p)} \frac{\u_j^{(p,1)}\sqrt{t}+\sqrt{1-t}\u_j^{(2)}}{B^{(p)}}\sqrt{t}.
\end{equation}
Combining (\ref{eq:genanal14}), (\ref{eq:genanal15}), and (\ref{eq:genanal16}) we obtain
\begin{equation}\label{eq:genanal17}
\frac{d}{d\u_j^{(p,1)}}\lp \frac{ A^{(i)}\u_j^{(2)}}{ZB^{(i)}}\rp=-\frac{ A^{(i)}\u_j^{(2)}}{Z^2B^{(i)}}\frac{d \sum_{i=1}^{l }A^{(i)}}{d\u_j^{(p,1)}}
=-\frac{ A^{(i)}\u_j^{(2)}}{Z^2B^{(i)}}\frac{d A^{(p)}}{d\u_j^{(p,1)}}=-\frac{\beta s  A^{(i)}A^{(p)}\u_j^{(2)}\sqrt{t}(\u_j^{(p,1)}\sqrt{t}+\sqrt{1-t}\u_j^{(2)})}{Z^2B^{(i)}B^{(p)}}.
\end{equation}
For $p=i$ we have
\begin{eqnarray}\label{eq:genanal18}
\frac{d}{d\u_j^{(i,1)}}\lp \frac{ A^{(i)}\u_j^{(2)}}{ZB^{(i)}}\rp & = &\u_j^{(2)}\frac{d}{d\u_j^{(i,1)}}\lp \frac{ A^{(i)}}{ZB^{(i)}}\rp
=\frac{\u_j^{(2)}}{Z}\frac{d}{d\u_j^{(i,1)}}\lp \frac{ A^{(i)}}{B^{(i)}}\rp-\frac{ A^{(i)}\u_j^{(2)}}{Z^2B^{(i)}}\frac{dZ}{d\u_j^{(i,1)}}\nonumber \\
& = & \frac{\u_j^{(2)}}{ZB^{(i)}}\frac{d A^{(i)}}{d\u_j^{(i,1)}}-\frac{\u_j^{(2)}A^{(i)}}{Z(B^{(i)})^2}\frac{d B^{(i)}}{d\u_j^{(i,1)}}
 -\frac{ A^{(i)}\u_j^{(2)}}{Z^2B^{(i)}}\frac{d A^{(i)}}{d\u_j^{(i,1)}} \nonumber \\
 & = & \frac{\beta s \u_j^{(2)}A^{(i)}}{ZB^{(i)}}\frac{d B^{(i)}}{d\u_j^{(i,1)}}-\frac{\u_j^{(2)}A^{(i)}}{Z(B^{(i)})^2}\frac{d B^{(i)}}{d\u_j^{(i,1)}}
 -\frac{ A^{(i)}\u_j^{(2)}}{Z^2B^{(i)}}\frac{d A^{(i)}}{d\u_j^{(i,1)}}\nonumber \\
 & = & \frac{\beta s \u_j^{(2)}A^{(i)}}{ZB^{(i)}}\frac{d B^{(i)}}{d\u_j^{(i,1)}}-\frac{\u_j^{(2)}A^{(i)}}{Z(B^{(i)})^2}\frac{d B^{(i)}}{d\u_j^{(i,1)}}
 -\frac{\beta s  A^{(i)}A^{(i)}\u_j^{(2)}\sqrt{t}(\u_j^{(i,1)}\sqrt{t}+\sqrt{1-t}\u_j^{(2)})}{Z^2B^{(i)}B^{(i)}}.\nonumber \\
\end{eqnarray}
A combination of (\ref{eq:genanal13}), (\ref{eq:genanal17}), and (\ref{eq:genanal18}) gives
\begin{eqnarray}\label{eq:genanal19}
\mE_{\u^{(i,1)},\u^{(2)},\u^{(i,3)}}  \frac{ A^{(i)}\u_j^{(i,1)}\u_j^{(2)}}{ZB^{(i)}} & = &
(\x^{(i)})^T\x^{(i)} \mE\lp\frac{\beta s \u_j^{(2)}A^{(i)}}{ZB^{(i)}}\frac{d B^{(i)}}{d\u_j^{(i,1)}}-\frac{\u_j^{(2)}A^{(i)}}{Z(B^{(i)})^2}\frac{d B^{(i)}}{d\u_j^{(i,1)}}\rp \nonumber \\
& & -\mE\sum_{p=1}^{l} (\x^{(i)})^T\x^{(p)} \frac{\beta s  A^{(i)}A^{(p)}\u_j^{(2)}\sqrt{t}(\u_j^{(p,1)}\sqrt{t}+\sqrt{1-t}\u_j^{(2)})}{Z^2B^{(i)}B^{(p)}}\nonumber \\
& = &
\mE\lp\frac{ A^{(i)}}{ZB^{(i)}}\lp \lp \beta s -\frac{1}{B^{(i)}}\rp\frac{\u_j^{(2)}\sqrt{1-t}+\sqrt{t}\u_j^{(i,1)}}{B^{(i)}}\u^{(2)}\sqrt{t}\rp\rp \nonumber  \\
& & -\mE\sum_{p=1}^{l} (\x^{(i)})^T\x^{(p)} \frac{\beta s  A^{(i)}A^{(p)}\u_j^{(2)}\sqrt{t}(\u_j^{(p,1)}\sqrt{t}+\sqrt{1-t}\u_j^{(2)})}{Z^2B^{(i)}B^{(p)}}.
\end{eqnarray}

\textbf{\underline{\emph{2) Fixing $\u^{(2)}$}}}

To ensure the easiness of presentation we will try to parallel as much as possible what we presented above. We start with the following
\begin{equation}\label{eq:genCanal1}
\mE_{\u^{(i,1)},\u^{(2)},\u^{(i,3)}}  \frac{ A^{(i)}\u_j^{(2)}\u^{(i,1)}}{ZB^{(i)}} =
\mE\lp\mE (\u_j^{(2)}\u_j^{(2)})\frac{d}{d\u_j^{(2)}}\lp \frac{ A^{(i)}\u^{(i,1)}}{ZB^{(i)}}\rp\rp
=\mE\lp\u^{(i,1)}\frac{d}{d\u_j^{(2)}}\lp \frac{ A^{(i)}}{ZB^{(i)}}\rp\rp.
\end{equation}
From (\ref{eq:genanal7}) we have
\begin{equation}\label{eq:genCanal2}
\frac{d B^{(p)}}{d\u_j^{(2)}} =  \frac{2\u_j^{(2)}(1-t)+2\sqrt{t}\sqrt{1-t}\u_j^{(i,1)}}{2B^{(p)}}
= \frac{\u_j^{(2)}\sqrt{1-t}+\sqrt{t}\u_j^{(i,1)}}{B^{(p)}}\sqrt{1-t},
\end{equation}
and
\begin{equation}\label{eq:genCanal3}
\frac{d A^{(p)}}{d\u_j^{(2)}} = \beta s A^{(p)} \frac{d B^{(p)}}{d\u_j^{(2)}}= \beta s A^{(i)} \frac{\u_j^{(2)}\sqrt{1-t}+\sqrt{t}\u_j^{(i,1)}}{B^{(p)}}\sqrt{1-t}.
\end{equation}
Now, we also have
\begin{eqnarray}\label{eq:genCanal4}
\frac{d}{d\u_j^{(2)}}\lp \frac{ A^{(i)}}{ZB^{(i)}}\rp & = &
\frac{1}{Z}\frac{d}{d\u_j^{(2)}}\lp \frac{ A^{(i)}}{B^{(i)}}\rp-\frac{ A^{(i)}}{Z^2B^{(i)}}\frac{dZ}{d\u_j^{(2)}}\nonumber \\
& = & \frac{1}{ZB^{(i)}}\frac{d A^{(i)}}{d\u_j^{(2)}}-\frac{A^{(i)}}{Z(B^{(i)})^2}\frac{d B^{(i)}}{d\u_j^{(2)}}
 -\frac{ A^{(i)}}{Z^2B^{(i)}}\frac{d \sum_{p=1}^{l}A^{(p)}}{d\u_j^{(2)}} \nonumber \\
 & = & \frac{\beta s A^{(i)}}{ZB^{(i)}}\frac{d B^{(i)}}{d\u_j^{(2)}}-\frac{A^{(i)}}{Z(B^{(i)})^2}\frac{d B^{(i)}}{d\u_j^{(2)}}
 -\frac{ A^{(i)}}{Z^2B^{(i)}}\sum_{p=1}^{l}\frac{d A^{(p)}}{d\u_j^{(2)}}\nonumber \\
 & = & \frac{\beta s A^{(i)}}{ZB^{(i)}}\frac{d B^{(i)}}{d\u_j^{(2)}}-\frac{A^{(i)}}{Z(B^{(i)})^2}\frac{d B^{(i)}}{d\u_j^{(2)}}
 -\frac{ A^{(i)}}{Z^2B^{(i)}}\sum_{p=1}^{l}\beta s A^{(p)} \frac{\u_j^{(2)}\sqrt{1-t}+\sqrt{t}\u_j^{(p,1)}}{B^{(p)}}\sqrt{1-t}\nonumber \\
\end{eqnarray}
A combination of (\ref{eq:genCanal1}), (\ref{eq:genCanal2}), (\ref{eq:genCanal3}), and (\ref{eq:genCanal4}) gives
\begin{eqnarray}\label{eq:genCanal5}
\mE_{\u^{(i,1)},\u^{(2)},\u^{(i,3)}}  \frac{ A^{(i)}\u_j^{(i,1)}\u_j^{(2)}}{ZB^{(i)}}
 & = &
\mE\lp\u_j^{(i,1)}\frac{d}{d\u_j^{(2)}}\lp \frac{ A^{(i)}}{ZB^{(i)}}\rp\rp \nonumber \\
 & = &
\mE\lp\frac{ A^{(i)}}{ZB^{(i)}}\lp \lp \beta s -\frac{1}{B^{(i)}}\rp\frac{\u_j^{(2)}\sqrt{1-t}+\sqrt{t}\u_j^{(i,1)}}{B^{(i)}}\u_j^{(i,1)}\sqrt{1-t}\rp\rp \nonumber \\
& &  -\mE\sum_{p=1}^{l}\frac{\beta s  A^{(i)}A^{(p)}\u_j^{(i,1)}\sqrt{1-t}}{Z^2B^{(i)}B^{(p)}} (\u_j^{(2)}\sqrt{1-t}+\sqrt{t}\u_j^{(p,1)})\nonumber \\
\end{eqnarray}

%%%%%%%%%%%%%%%%%%%%%%%%%%%%%%%%%%%%%%%%%%%%%%%%%%%%%%%%%%%%%%%%%%%%%%%%
\subsection{Handling $\mE_{\u^{(i,1)},\u^{(2)},\u^{(i,3)}}  \frac{ A^{(i)}(\u_j^{(2)})^2}{ZB^{(i)}}$}
\label{sec:hand2}
%%%%%%%%%%%%%%%%%%%%%%%%%%%%%%%%%%%%%%%%%%%%%%%%%%%%%%%%%%%%%%%%%%%%%%%%

In this section we study $\mE_{\u^{(i,1)},\u^{(2)},\u^{(i,3)}}  \frac{ A^{(i)}(\u_j^{(2)})^2}{ZB^{(i)}}$. We start with the following observation
\begin{equation}\label{eq:gen1anal1}
\mE_{\u^{(i,1)},\u^{(2)},\u^{(i,3)}}  \frac{ A^{(i)}(\u_j^{(2)})^2}{ZB^{(i)}} =
\mE\lp\mE (\u_j^{(2)}\u_j^{(2)})\frac{d}{d\u_j^{(2)}}\lp \frac{ A^{(i)}\u_j^{(2)}}{ZB^{(i)}}\rp\rp=\mE\lp\frac{ A^{(i)}}{ZB^{(i)}}+\u_j^{(2)}\frac{d}{d\u_j^{(2)}}\lp \frac{ A^{(i)}}{ZB^{(i)}}\rp\rp.
\end{equation}
A combination of (\ref{eq:genCanal4}) and (\ref{eq:gen1anal1}) gives
\begin{eqnarray}\label{eq:gen1anal5}
\mE_{\u^{(i,1)},\u^{(2)},\u^{(i,3)}}  \frac{ A^{(i)}(\u_j^{(2)})^2}{ZB^{(i)}}
 & = &
\mE\lp\frac{ A^{(i)}}{ZB^{(i)}}+\u_j^{(2)}\frac{d}{d\u_j^{(2)}}\lp \frac{ A^{(i)}}{ZB^{(i)}}\rp\rp \nonumber \\
 & = &
\mE\lp\frac{ A^{(i)}}{ZB^{(i)}}\lp 1 +\lp \beta s -\frac{1}{B^{(i)}}\rp\frac{\u_j^{(2)}\sqrt{1-t}+\sqrt{t}\u_j^{(i,1)}}{B^{(i)}}\u_j^{(2)}\sqrt{1-t}\rp\rp \nonumber \\
& &  -\mE\sum_{p=1}^{l}\frac{\beta s  A^{(i)}A^{(p)}\u_j^{(2)}\sqrt{1-t}}{Z^2B^{(i)}B^{(p)}} (\u_j^{(2)}\sqrt{1-t}+\sqrt{t}\u_j^{(p,1)})\nonumber \\
\end{eqnarray}

%%%%%%%%%%%%%%%%%%%%%%%%%%%%%%%%%%%%%%%%%%%%%%%%%%%%%%%%%%%%%%%%%%%%%%%%
\subsection{Handling $\mE_{\u^{(i,1)},\u^{(2)},\u^{(i,3)}}  \frac{ A^{(i)}(\u_j^{(i,1)})^2}{ZB^{(i)}}$}
\label{sec:hand3}
%%%%%%%%%%%%%%%%%%%%%%%%%%%%%%%%%%%%%%%%%%%%%%%%%%%%%%%%%%%%%%%%%%%%%%%%

Below we study $\mE_{\u^{(i,1)},\u^{(2)},\u^{(i,3)}}  \frac{ A^{(i)}(\u_j^{(i,1)})^2}{ZB^{(i)}}$. We start with the following
\begin{equation}\label{eq:gen2anal12}
\mE_{\u^{(i,1)},\u^{(2)},\u^{(i,3)}}  \frac{ A^{(i)}\u_j^{(i,1)}\u_j^{(2)}}{ZB^{(i)}} =
\mE\lp\sum_{p=1,p\neq i}^{l} \mE (\u_j^{(i,1)}\u_j^{(p,1)})\frac{d}{d\u_j^{(p,1)}}\lp \frac{ A^{(i)}\u_j^{(i,1)}}{ZB^{(i)}}\rp
+\mE (\u_j^{(i,1)}\u_j^{(i,1)})\frac{d}{d\u_j^{(i,1)}}\lp \frac{ A^{(i)}\u_j^{(i,1)}}{ZB^{(i)}}\rp\rp.
\end{equation}
We easily also have
\begin{equation}\label{eq:gen2anal13}
\mE_{\u^{(i,1)},\u^{(2)},\u^{(i,3)}}  \frac{ A^{(i)}\u_j^{(i,1)}\u_j^{(i,1)}}{ZB^{(i)}} =\mE\lp
\sum_{p=1,p\neq i}^{l} (\x^{(i)})^T\x^{(p)}\frac{d}{d\u_j^{(p,1)}}\lp \frac{ A^{(i)}\u_j^{(i,1)}}{ZB^{(i)}}\rp
+(\x^{(i)})^T\x^{(i)}\frac{d}{d\u_j^{(i,1)}}\lp \frac{ A^{(i)}\u_j^{(i,1)}}{ZB^{(i)}}\rp\rp.
\end{equation}
For $p\neq i$ we find
\begin{eqnarray}\label{eq:gen2anal14}
\frac{d}{d\u_j^{(p,1)}}\lp \frac{ A^{(i)}\u_j^{(i,1)}}{ZB^{(i)}}\rp=\frac{ A^{(i)}\u_j^{(i,1)}}{B^{(i)}}\frac{d}{d\u_j^{(p,1)}}\lp \frac{1}{Z}\rp
=-\frac{ A^{(i)}\u_j^{(i,1)}}{Z^2B^{(i)}}\frac{dZ}{d\u_j^{(p,1)}}=-\frac{ A^{(i)}\u_j^{(i,1)}}{Z^2B^{(i)}}\frac{d \sum_{i=1}^{l }A^{(i)}}{d\u_j^{(p,1)}}.
\end{eqnarray}
From (\ref{eq:gen2anal14}), (\ref{eq:genanal15}), and (\ref{eq:genanal16}) we find
\begin{equation}\label{eq:gen2anal17}
\frac{d}{d\u_j^{(p,1)}}\lp \frac{ A^{(i)}\u_j^{(i,1)}}{ZB^{(i)}}\rp=-\frac{ A^{(i)}\u_j^{(i,1)}}{Z^2B^{(i)}}\frac{d \sum_{i=1}^{l }A^{(i)}}{d\u_j^{(p,1)}}
=-\frac{ A^{(i)}\u_j^{(i,1)}}{Z^2B^{(i)}}\frac{d A^{(p)}}{d\u_j^{(p,1)}}=-\frac{ \beta s A^{(i)}A^{(p)}\u_j^{(i,1)}\sqrt{t}(\u_j^{(p,1)}\sqrt{t}+\sqrt{1-t}\u_j^{(2)})}{Z^2B^{(i)}B^{(p)}}.
\end{equation}
For $p=i$ we have
\begin{eqnarray}\label{eq:gen2anal18}
\frac{d}{d\u_j^{(i,1)}}\lp \frac{ A^{(i)}\u_j^{(i,1)}}{ZB^{(i)}}\rp & = &\u_j^{(i,1)}\frac{d}{d\u_j^{(i,1)}}\lp \frac{ A^{(i)}}{ZB^{(i)}}\rp
+\frac{ A^{(i)}}{ZB^{(i)}}.
\end{eqnarray}
Combining (\ref{eq:genanal18}) and (\ref{eq:gen2anal18}) we have
\begin{equation}\label{eq:gen2anal18a}
\frac{d}{d\u_j^{(i,1)}}\lp \frac{ A^{(i)}\u_j^{(i,1)}}{ZB^{(i)}}\rp  =  \frac{\beta s \u_j^{(i,1)}A^{(i)}}{ZB^{(i)}}\frac{d B^{(i)}}{d\u_j^{(i,1)}}-\frac{\u_j^{(i,1)}A^{(i)}}{Z(B^{(i)})^2}\frac{d B^{(i)}}{d\u_j^{(i,1)}}
 -\frac{\beta s  A^{(i)}A^{(i)}\u_j^{(i,1)}\sqrt{t}(\u_j^{(i,1)}\sqrt{t}+\sqrt{1-t}\u_j^{(2)})}{Z^2B^{(i)}B^{(i)}} +\frac{ A^{(i)}}{ZB^{(i)}}.
\end{equation}
A combination of (\ref{eq:gen2anal13}), (\ref{eq:gen2anal17}), and (\ref{eq:gen2anal18a}) gives
\begin{eqnarray}\label{eq:gen2anal19}
\mE_{\u^{(i,1)},\u^{(2)},\u^{(i,3)}}  \frac{ A^{(i)}(\u_j^{(i,1)})^2}{ZB^{(i)}} & = &
\mE\lp\frac{\beta s \u^{(i,1)}A^{(i)}}{ZB^{(i)}}\frac{d B^{(i)}}{d\u_j^{(i,1)}}-\frac{\u^{(i,1)}A^{(i)}}{Z(B^{(i)})^2}\frac{d B^{(i)}}{d\u_j^{(i,1)}} +\frac{ A^{(i)}}{ZB^{(i)}}\rp \nonumber \\
& & -\mE\sum_{p=1}^{l} (\x^{(i)})^T\x^{(p)} \frac{\beta s  A^{(i)}A^{(p)}\u^{(i,1)}\sqrt{t}(\u_j^{(p,1)}\sqrt{t}+\sqrt{1-t}\u_j^{(2)})}{Z^2B^{(i)}B^{(p)}}\nonumber \\
& = &
\mE\lp\frac{ A^{(i)}}{ZB^{(i)}}\lp 1 +\lp \beta s -\frac{1}{B^{(i)}}\rp\frac{\u_j^{(2)}\sqrt{1-t}+\sqrt{t}\u_j^{(i,1)}}{B^{(i)}}\u_j^{(i,1)}\sqrt{t}\rp\rp \nonumber \\ & & -\mE\sum_{p=1}^{l} (\x^{(i)})^T\x^{(p)} \frac{ \beta s A^{(i)}A^{(p)}\u_j^{(i,1)}\sqrt{t}(\u_j^{(p,1)}\sqrt{t}+\sqrt{1-t}\u_j^{(2)})}{Z^2B^{(i)}B^{(p)}}.
\end{eqnarray}

%%%%%%%%%%%%%%%%%%%%%%%%%%%%%%%%%%%%%%%%%%%%%%%%%%%%%%%%%%%%%%%%%%%%%%%%
\subsection{Handling $\mE_{\u^{(i,1)},\u^{(2)},\u^{(i,3)}}  \frac{ A^{(i)}\u^{(i,3)}}{Z}$}
\label{sec:hand4}
%%%%%%%%%%%%%%%%%%%%%%%%%%%%%%%%%%%%%%%%%%%%%%%%%%%%%%%%%%%%%%%%%%%%%%%%

Below we study $\mE_{\u^{(i,1)},\u^{(2)},\u^{(i,3)}}  \frac{ A^{(i)}\u^{(i,3)}}{Z}$.
We start with the following observation
\begin{equation}\label{eq:genDanal12}
\mE_{\u^{(i,1)},\u^{(2)},\u^{(i,3)}}  \frac{ A^{(i)}\u^{(i,3)}}{Z} =
\mE\lp\sum_{p=1,p\neq i}^{l} \mE (\u^{(i,3)}\u^{(p,3)})\frac{d}{d\u^{(p,3)}}\lp \frac{ A^{(i)}}{Z}\rp
+\mE (\u^{(i,3)}\u^{(i,3)})\frac{d}{d\u^{(i,3)}}\lp \frac{ A^{(i)}}{Z}\rp\rp.
\end{equation}
Trivially we then also have
\begin{equation}\label{eq:genDanal13}
\mE_{\u^{(i,1)},\u^{(2)},\u^{(i,3)}}  \frac{ A^{(i)}\u^{(i,3)}}{Z} =\mE\lp
\sum_{p=1,p\neq i}^{l} (\x^{(i)})^T\x^{(p)}\frac{d}{d\u^{(p,3)}}\lp \frac{ A^{(i)}}{Z}\rp
+(\x^{(i)})^T\x^{(i)}\frac{d}{d\u^{(i,3)}}\lp \frac{ A^{(i)}}{Z}\rp\rp.
\end{equation}
For $p\neq i$ we find
\begin{eqnarray}\label{eq:genDanal14}
\frac{d}{d\u^{(p,3)}}\lp \frac{ A^{(i)}}{Z}\rp=A^{(i)}\frac{d}{d\u^{(p,3)}}\lp \frac{1}{Z}\rp
=-\frac{ A^{(i)}}{Z^2}\frac{dZ}{d\u^{(p,3)}}=-\frac{ A^{(i)}}{Z^2}\frac{d \sum_{i=1}^{l }A^{(i)}}{d\u^{(p,3)}}.
\end{eqnarray}
From (\ref{eq:genanal7}) we find
\begin{equation}\label{eq:genDanal16}
\frac{d A^{(p)}}{d\u^{(p,3)}} = \beta  A^{(p)} \sqrt{1-t}.
\end{equation}
Combining (\ref{eq:genDanal14}) and (\ref{eq:genDanal16}) we obtain
\begin{equation}\label{eq:genDanal17}
\frac{d}{d\u^{(p,3)}}\lp \frac{ A^{(i)}}{Z}\rp=-\frac{\beta A^{(i)}  A^{(p)} \sqrt{1-t}}{Z^2}.
\end{equation}
For $p=i$ we have
\begin{equation}\label{eq:genDanal18}
\frac{d}{d\u^{(i,3)}}\lp \frac{ A^{(i)}}{Z}\rp  =
\frac{1}{Z}\frac{d A^{(i)}}{d\u^{(i,3)}}-\frac{ A^{(i)}}{Z^2}\frac{dZ}{d\u^{(i,3)}} =  \frac{\beta A^{(i)}\sqrt{1-t}}{Z} -\frac{ A^{(i)}}{Z^2}\frac{d A^{(i)}}{d\u^{(i,3)}}  = \frac{\beta A^{(i)}\sqrt{1-t}}{Z} -\frac{ A^{(i)}A^{(i)}\beta\sqrt{1-t}}{Z^2}.
\end{equation}
A combination of (\ref{eq:genDanal13}), (\ref{eq:genDanal17}), and (\ref{eq:genDanal18}) gives
\begin{equation}\label{eq:genDanal19}
\mE_{\u^{(i,1)},\u^{(2)},\u^{(i,3)}}  \frac{ A^{(i)}\u^{(i,3)}}{Z}  =
\mE \frac{\beta A^{(i)}\sqrt{1-t}}{Z} -\mE\sum_{p=1}^{l} (\x^{(i)})^T\x^{(p)}\frac{\beta A^{(i)}  A^{(p)} \sqrt{1-t}}{Z^2}.
\end{equation}

%%%%%%%%%%%%%%%%%%%%%%%%%%%%%%%%%%%%%%%%%%%%%%%%%%%%%%%%%%%%%%%%%%%%%%%%
\subsection{Connecting all pieces together}
\label{sec:conalt}
%%%%%%%%%%%%%%%%%%%%%%%%%%%%%%%%%%%%%%%%%%%%%%%%%%%%%%%%%%%%%%%%%%%%%%%%

Combining (\ref{eq:genanal11}), (\ref{eq:genanal19}), (\ref{eq:genCanal5}), (\ref{eq:gen1anal5}), (\ref{eq:gen2anal19}), and (\ref{eq:genDanal19}) we obtain
\begin{eqnarray}\label{eq:conalt1}
\frac{d\psi(\calX,\beta,s,t)}{dt}
& = &   \mE_{\u^{(i,1)},\u^{(2)},\u^{(i,3)}} \frac{s}{2\sqrt{n}}\sum_{j=1}^{m} \sum_{i=1}^{l} \frac{ A^{(i)}\lp(\u_j^{(i,1)})^2-(\u_j^{(2)})^2+\u_j^{(i,1)}\u_j^{(2)}\lp\frac{\sqrt{1-t}}{\sqrt{t}}-\frac{\sqrt{t}}{\sqrt{1-t}}\rp \rp }{ZB^{(i)}} \nonumber \\
& &  - \mE_{\u^{(i,1)},\u^{(2)},\u^{(i,3)}} \frac{1}{2\sqrt{n}}  \sum_{i=1}^{l}
\frac{A^{(i)}\u^{(i,3)}}{Z\sqrt{1-t}} \nonumber \\
& = & \frac{s}{2\sqrt{n}}\sum_{j=1}^{m} \sum_{i=1}^{l}
\mE\lp\frac{ A^{(i)}}{ZB^{(i)}}\lp 1 +\lp \beta s -\frac{1}{B^{(i)}}\rp\frac{\u_j^{(2)}\sqrt{1-t}+\sqrt{t}\u_j^{(i,1)}}{B^{(i)}}\u_j^{(i,1)}\sqrt{t}\rp\rp \nonumber \\ & & -\frac{s}{2\sqrt{n}}\sum_{j=1}^{m} \sum_{i=1}^{l}\mE\sum_{p=1}^{l} (\x^{(i)})^T\x^{(p)} \frac{ \beta s A^{(i)}A^{(p)}\u_j^{(i,1)}\sqrt{t}(\u_j^{(p,1)}\sqrt{t}+\sqrt{1-t}\u_j^{(2)})}{Z^2B^{(i)}B^{(p)}} \nonumber \\
&  & -\frac{s}{2\sqrt{n}}\sum_{j=1}^{m} \sum_{i=1}^{l}
\mE\lp\frac{ A^{(i)}}{ZB^{(i)}}\lp 1 +\lp \beta s -\frac{1}{B^{(i)}}\rp\frac{\u_j^{(2)}\sqrt{1-t}+\sqrt{t}\u_j^{(i,1)}}{B^{(i)}}\u_j^{(2)}\sqrt{1-t}\rp\rp \nonumber \\
& &  +\frac{s}{2\sqrt{n}}\sum_{j=1}^{m} \sum_{i=1}^{l}\mE\sum_{p=1}^{l}\frac{\beta s  A^{(i)}A^{(p)}\u_j^{(2)}\sqrt{1-t}}{Z^2B^{(i)}B^{(p)}} (\u_j^{(2)}\sqrt{1-t}+\sqrt{t}\u_j^{(p,1)})\nonumber \\
&  & +\frac{s}{2\sqrt{n}}\sum_{j=1}^{m} \sum_{i=1}^{l}
\mE\lp\frac{ A^{(i)}}{ZB^{(i)}}\lp \lp \beta s -\frac{1}{B^{(i)}}\rp\frac{\u_j^{(2)}\sqrt{1-t}+\sqrt{t}\u_j^{(i,1)}}{B^{(i)}}\u_j^{(2)}\sqrt{t}\rp\rp\frac{\sqrt{1-t}}{\sqrt{t}} \nonumber  \\
& & -\frac{s}{2\sqrt{n}}\sum_{j=1}^{m} \sum_{i=1}^{l}\mE\sum_{p=1}^{l} (\x^{(i)})^T\x^{(p)} \frac{\beta s  A^{(i)}A^{(p)}\u_j^{(2)}\sqrt{t}(\u_j^{(p,1)}\sqrt{t}+\sqrt{1-t}\u_j^{(2)})}{Z^2B^{(i)}B^{(p)}}\frac{\sqrt{1-t}}{\sqrt{t}}\nonumber \\
&  & -\frac{s}{2\sqrt{n}}\sum_{j=1}^{m} \sum_{i=1}^{l}
\mE\lp\frac{ A^{(i)}}{ZB^{(i)}}\lp \lp \beta s -\frac{1}{B^{(i)}}\rp\frac{\u_j^{(2)}\sqrt{1-t}+\sqrt{t}\u_j^{(i,1)}}{B^{(i)}}\u_j^{(i,1)}\sqrt{1-t}\rp\rp \frac{\sqrt{t}}{\sqrt{1-t}} \nonumber \\
& &  +\frac{s}{2\sqrt{n}}\sum_{j=1}^{m} \sum_{i=1}^{l}\mE\sum_{p=1}^{l}\frac{\beta s  A^{(i)}A^{(p)}\u_j^{(i,1)}\sqrt{1-t}}{Z^2B^{(i)}B^{(p)}} (\u_j^{(2)}\sqrt{1-t}+\sqrt{t}\u_j^{(p,1)})\frac{\sqrt{t}}{\sqrt{1-t}}\nonumber \\
& & -\frac{1}{2\sqrt{n}}  \sum_{i=1}^{l} \lp \frac{\beta A^{(i)}}{Z} -\mE\sum_{p=1}^{l} (\x^{(i)})^T\x^{(p)}\frac{\beta A^{(i)}  A^{(p)} }{Z^2}\rp.
\end{eqnarray}
After cancelling all terms that can be cancelled and rearranging a bit we have
\begin{eqnarray}\label{eq:conalt2}
\frac{d\psi(\calX,\beta,s,t)}{dt}
 & = & -\frac{s}{2\sqrt{n}} \sum_{i=1}^{l}\mE\sum_{p=1}^{l}\sum_{j=1}^{m} (\x^{(i)})^T\x^{(p)} \frac{ \beta s A^{(i)}A^{(p)}\u_j^{(i,1)}\sqrt{t}(\u_j^{(p,1)}\sqrt{t}+\sqrt{1-t}\u_j^{(2)})}{Z^2B^{(i)}B^{(p)}} \nonumber \\
& &  +\frac{s}{2\sqrt{n}} \sum_{i=1}^{l}\mE\sum_{p=1}^{l}\sum_{j=1}^{m}\frac{\beta s  A^{(i)}A^{(p)}\u_j^{(2)}\sqrt{1-t}}{Z^2B^{(i)}B^{(p)}} (\u_j^{(2)}\sqrt{1-t}+\sqrt{t}\u_j^{(i,1)})\nonumber \\
& & -\frac{s}{2\sqrt{n}} \sum_{i=1}^{l}\mE\sum_{p=1}^{l}\sum_{j=1}^{m} (\x^{(i)})^T\x^{(p)} \frac{\beta s  A^{(i)}A^{(p)}\u_j^{(2)}\sqrt{1-t}(\u_j^{(p,1)}\sqrt{t}+\sqrt{1-t}\u_j^{(2)})}{Z^2B^{(i)}B^{(p)}}\nonumber \\
& &  +\frac{s}{2\sqrt{n}} \sum_{i=1}^{l}\mE\sum_{p=1}^{l}\sum_{j=1}^{m}\frac{\beta s  A^{(i)}A^{(p)}\u_j^{(i,1)}\sqrt{t}}{Z^2B^{(i)}B^{(p)}} (\u_j^{(2)}\sqrt{1-t}+\sqrt{t}\u_j^{(p,1)})\nonumber \\
& & -\frac{1}{2\sqrt{n}}   \mE\sum_{i=1}^{l}\sum_{p=1}^{l} (1-(\x^{(i)})^T\x^{(p)})\frac{\beta A^{(i)}  A^{(p)} }{Z^2}.
\end{eqnarray}
Grouping the first and third and second and fourth term we can write
\begin{eqnarray}\label{eq:conalt3}
\frac{d\psi(\calX,\beta,s,t)}{dt}
 & = & -\frac{s}{2\sqrt{n}} \sum_{i=1}^{l}\mE\sum_{p=1}^{l}\sum_{j=1}^{m} (\x^{(i)})^T\x^{(p)} \frac{ \beta s A^{(i)}A^{(p)}(\u_j^{(i,1)}\sqrt{t}+\u_j^{(2)}\sqrt{1-t})(\u_j^{(p,1)}\sqrt{t}+\sqrt{1-t}\u_j^{(2)})}{Z^2B^{(i)}B^{(p)}} \nonumber \\
& &  +\frac{s}{2\sqrt{n}} \sum_{i=1}^{l}\mE\sum_{p=1}^{l}\sum_{j=1}^{m}\frac{\beta s  A^{(i)}A^{(p)}(\u_j^{(i,1)}\sqrt{t}+\u_j^{(2)}\sqrt{1-t})(\u_j^{(2)}\sqrt{1-t}+\sqrt{t}\u_j^{(p,1)})}{Z^2B^{(i)}B^{(p)}} \nonumber \\
& & -\frac{1}{2\sqrt{n}}   \mE\sum_{i=1}^{l}\sum_{p=1}^{l} (1-(\x^{(i)})^T\x^{(p)})\frac{\beta A^{(i)}  A^{(p)} }{Z^2}.
\end{eqnarray}
Grouping the first and second term we can also write
\begin{multline}\label{eq:conalt4}
\frac{d\psi(\calX,\beta,s,t)}{dt}
  =  \frac{s}{2\sqrt{n}} \sum_{i=1}^{l}\mE\sum_{p=1}^{l}\sum_{j=1}^{m} (1-(\x^{(i)})^T\x^{(p)}) \frac{ \beta s A^{(i)}A^{(p)}(\u_j^{(i,1)}\sqrt{t}+\u_j^{(2)}\sqrt{1-t})(\u_j^{(p,1)}\sqrt{t}+\sqrt{1-t}\u_j^{(2)})}{Z^2B^{(i)}B^{(p)}} \\
  -\frac{1}{2\sqrt{n}}   \mE\sum_{i=1}^{l}\sum_{p=1}^{l} (1-(\x^{(i)})^T\x^{(p)})\frac{\beta A^{(i)}  A^{(p)} }{Z^2},
\end{multline}
and
\begin{equation}\label{eq:conalt5}
\frac{d\psi(\calX,\beta,s,t)}{dt} =  -\frac{\beta}{2\sqrt{n}} \sum_{i=1}^{l}\mE\sum_{p=1}^{l} \frac{ A^{(i)}A^{(p)}}{Z^2}(1-(\x^{(i)})^T\x^{(p)})\lp1-
\sum_{j=1}^{m}\frac{(\u_j^{(i,1)}\sqrt{t}+\u_j^{(2)}\sqrt{1-t})(\u_j^{(p,1)}\sqrt{t}+\sqrt{1-t}\u_j^{(2)})}{B^{(i)}B^{(p)}}\rp.
\end{equation}
Finally we have
\begin{equation}\label{eq:conalt5}
\frac{d\psi(\calX,\beta,s,t)}{dt} =  -\frac{\beta}{2\sqrt{n}} \sum_{i=1}^{l}\mE\sum_{p=1}^{l} \frac{ A^{(i)}A^{(p)}}{Z^2}(1-(\x^{(i)})^T\x^{(p)})\lp1-
\frac{(\u^{(i,1)}\sqrt{t}+\u^{(2)}\sqrt{1-t})^T(\u^{(p,1)}\sqrt{t}+\sqrt{1-t}\u^{(2)})}{B^{(i)}B^{(p)}}\rp.
\end{equation}
Since $\|\u^{(i,1)}\sqrt{t}+\u^{(2)}\sqrt{1-t}\|_2=B^{(i)}$ we easily have $\frac{\psi(\calX,\beta,s,t)}{dt}\leq 0$ and function $\psi(\calX,\beta,s,t)$ is indeed decreasing in $t$. We summarize the obtained results in the following theorem.

\begin{theorem}
\label{thm:thm1}
Let $\calX=\{\x^{(1)},\x^{(2)},\dots,\x^{(l)}\}$, where $\x^{(i)}\in S^{n-1},1\leq i\leq l$, and $S^{n-1}$ is the unit sphere in $\mR^n$. Let $G$ be an $m \times n$ matrix of i.i.d. standard normals, $\u^{(2)}$ an $m \times 1$ vector of i.i.d. standard normals, and $\h$ an $n \times 1$ vector of i.i.d. standard normals. Moreover, let $G$, $\u^{(2)}$, and $\h$ be independent of each other. Then the following function $\psi(\calX,\beta,s,t)$
\begin{eqnarray}\label{eq:thm1eq1}
\psi(\calX,\beta,s,t)= \mE_{G,\u^{(2)},\h} \frac{1}{\beta\sqrt{n}} \log\lp \sum_{i=1}^{l}e^{\beta\lp s\|\sqrt{t}
 G\x^{(i)}+\sqrt{1-t}\u^{(2)}\|_2+\sqrt{1-t}\h^T\x^{(i)}\rp} \rp,
\end{eqnarray}
is decreasing in $t$.
\end{theorem}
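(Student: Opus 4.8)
The plan is to establish monotonicity by showing that $\frac{d\psi(\calX,\beta,s,t)}{dt}\le 0$ for every $t\in(0,1)$; since $\psi$ is continuous in $t$ on $[0,1]$, this gives that it is decreasing on the whole interval. First I would differentiate under the expectation in the representation $\psi(\calX,\beta,s,t)=\mE\,\frac{1}{\beta\sqrt n}\log Z$ of (\ref{eq:genanal8}), using the quantities $A^{(i)}$, $B^{(i)}$, $Z$ introduced in (\ref{eq:genanal7}). Differentiating $B^{(i)}=\big(\sum_{j=1}^m(\sqrt t\,\u_j^{(i,1)}+\sqrt{1-t}\,\u_j^{(2)})^2\big)^{1/2}$ in $t$ and applying the chain rule to $\log Z$, the derivative collapses to a linear combination, summed over $i$ and $j$ with $t$-dependent scalar coefficients, of the four elementary expectations
\[
\mE\,\frac{A^{(i)}(\u_j^{(i,1)})^2}{ZB^{(i)}},\qquad \mE\,\frac{A^{(i)}(\u_j^{(2)})^2}{ZB^{(i)}},\qquad \mE\,\frac{A^{(i)}\u_j^{(i,1)}\u_j^{(2)}}{ZB^{(i)}},\qquad \mE\,\frac{A^{(i)}\u^{(i,3)}}{Z}.
\]

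The core of the proof is to evaluate each of these four expectations by Gaussian integration by parts. For the ones carrying a factor $\u_j^{(i,1)}$ I would integrate by parts in the variables $\{\u_j^{(p,1)}\}_{p=1}^{l}$, using $\mE[\u_j^{(i,1)}\u_j^{(p,1)}]=(\x^{(i)})^T\x^{(p)}$; for the one carrying $(\u_j^{(2)})^2$ I would integrate by parts in $\u_j^{(2)}$ (whose coordinates are common to all $i$), so only the self-pairing survives; and for the last one I would integrate by parts in $\{\u^{(p,3)}\}_{p=1}^{l}$, again using $\mE[\u^{(i,3)}\u^{(p,3)}]=(\x^{(i)})^T\x^{(p)}$. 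Every such differentiation of $A^{(p)}$, $B^{(p)}$ or $Z$ produces factors of the form $\beta s$, $1/B^{(p)}$, and $\sqrt t\,\u_j^{(p,1)}+\sqrt{1-t}\,\u_j^{(2)}$, so all resulting terms are built from one family of building blocks, and the diagonal $p=i$ contributions additionally generate the terms $\frac{A^{(i)}}{ZB^{(i)}}$ coming from differentiating the $B^{(i)}$ in the denominator.

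Substituting the four evaluated expectations back into the differentiated identity, I would then carry out the (mechanical) cancellation. All terms not carrying a factor $1-(\x^{(i)})^T\x^{(p)}$ cancel; grouping the surviving sums pairwise, the factors $\sqrt t\,\u_j^{(i,1)}+\sqrt{1-t}\,\u_j^{(2)}$ and $\sqrt t\,\u_j^{(p,1)}+\sqrt{1-t}\,\u_j^{(2)}$ assemble, after summing over $j$, into a single inner product, yielding the compact identity
\[
\frac{d\psi(\calX,\beta,s,t)}{dt}=-\frac{\beta}{2\sqrt n}\sum_{i=1}^{l}\sum_{p=1}^{l}\mE\,\frac{A^{(i)}A^{(p)}}{Z^2}\,\big(1-(\x^{(i)})^T\x^{(p)}\big)\lp 1-\frac{(\sqrt t\,\u^{(i,1)}+\sqrt{1-t}\,\u^{(2)})^T(\sqrt t\,\u^{(p,1)}+\sqrt{1-t}\,\u^{(2)})}{B^{(i)}B^{(p)}}\rp.
\]
To conclude I would invoke two positivity facts: since $\x^{(i)},\x^{(p)}\in S^{n-1}$, Cauchy--Schwarz gives $1-(\x^{(i)})^T\x^{(p)}\ge 0$, and since $B^{(i)}=\|\sqrt t\,\u^{(i,1)}+\sqrt{1-t}\,\u^{(2)}\|_2$ and likewise for $B^{(p)}$, Cauchy--Schwarz again makes the bracketed quantity nonnegative. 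As $A^{(i)},A^{(p)},Z>0$, every summand is nonnegative, hence $\frac{d\psi}{dt}\le 0$.

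The main obstacle is organizational rather than conceptual: keeping the integration-by-parts bookkeeping straight — choosing the correct differentiation variables in each of the three cases and faithfully tracking the diagonal $p=i$ terms — so that the contributions not proportional to $1-(\x^{(i)})^T\x^{(p)}$ genuinely cancel and the clean final formula emerges. One mild technical point is that the coefficients $\frac{\sqrt{1-t}}{\sqrt t}$ and $\frac{\sqrt t}{\sqrt{1-t}}$ entering $\frac{dB^{(i)}}{dt}$ are singular at the endpoints, which is why the differentiation argument is carried out on the open interval $(0,1)$ and monotonicity on $[0,1]$ is recovered afterwards from continuity.
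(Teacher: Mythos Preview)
Your proposal is correct and follows essentially the same route as the paper: differentiate $\psi$ in $t$, reduce to the four elementary expectations you list, evaluate each by Gaussian integration by parts with the covariances $(\x^{(i)})^T\x^{(p)}$, cancel to obtain exactly the identity (\ref{eq:conalt5}), and conclude via the two Cauchy--Schwarz observations. The only small tactical difference is that the paper computes the cross expectation $\mE\,\frac{A^{(i)}\u_j^{(i,1)}\u_j^{(2)}}{ZB^{(i)}}$ in two ways (once integrating by parts in the $\u^{(p,1)}$ variables, once in $\u_j^{(2)}$) and uses one formula for each of the two occurrences of the cross term in (\ref{eq:genanal11}), which streamlines the cancellation in Section~\ref{sec:conalt}.
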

\begin{proof}
  Follows from the above discussion.
\end{proof}
\begin{corollary}
  Assuming the setup of Theorem \ref{thm:thm1}, we have
\begin{eqnarray}\label{eq:co1eq1}
\psi(\calX,\beta,s,t)= \psi(\calX,\beta,s,0)+\int_{0}^{t}\frac{d\psi(\calX,\beta,s,t)}{dt}dt,
\end{eqnarray}
as well as the following comparison principle
\begin{eqnarray}\label{eq:co1eq2}
\psi(\calX,\beta,s,0) \geq  \psi(\calX,\beta,s,t)\geq \psi(\calX,\beta,s,1).
\end{eqnarray}
\end{corollary}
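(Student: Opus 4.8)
The plan is to obtain both assertions of the corollary from Theorem~\ref{thm:thm1} together with the fundamental theorem of calculus, so the only real work is to record enough regularity of $t\mapsto\psi(\calX,\beta,s,t)$ to make that legitimate.

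First I would check regularity. For almost every realization of $G,\u^{(2)},\h$ the integrand in (\ref{eq:thm1eq1}) is a differentiable function of $t$ on $(0,1)$, and the chain of identities in (\ref{eq:genanal9})--(\ref{eq:conalt5}) shows that differentiation may be carried inside $\mE$: the normalized weights $A^{(i)}/Z$ lie in $[0,1]$, one has $B^{(i)}=\|\sqrt{t}\,\u^{(i,1)}+\sqrt{1-t}\,\u^{(2)}\|_2>0$ almost surely, and although the individual terms produced by the Gaussian integration by parts in Sections~\ref{sec:hand1}--\ref{sec:hand4} carry the factors $1/\sqrt{t}$ and $1/\sqrt{1-t}$ (see (\ref{eq:genanal11})), those singular factors cancel in the regrouping that leads to (\ref{eq:conalt5}). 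Consequently $d\psi(\calX,\beta,s,t)/dt$ is a continuous, locally bounded function of $t$ on $(0,1)$, while $\psi(\calX,\beta,s,t)$ itself is continuous on the closed interval $[0,1]$ by dominated convergence (the integrand is dominated, uniformly in $t$, by $\frac{1}{\beta\sqrt{n}}(\log l+\beta\max_i(\|\u^{(i,1)}\|_2+\|\u^{(2)}\|_2+|\u^{(i,3)}|))$, which is integrable). Thus $\psi(\calX,\beta,s,\cdot)$ is continuous on $[0,1]$ and $C^1$ on $(0,1)$ with integrable derivative.

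Granting this, (\ref{eq:co1eq1}) is just the fundamental theorem of calculus on $[0,t]$. For the comparison principle (\ref{eq:co1eq2}) I would invoke Theorem~\ref{thm:thm1}: by the sign analysis of (\ref{eq:conalt5}) --- using $\|\sqrt{t}\,\u^{(i,1)}+\sqrt{1-t}\,\u^{(2)}\|_2=B^{(i)}$ and Cauchy--Schwarz to see that the bracket $1-\frac{(\u^{(i,1)}\sqrt{t}+\u^{(2)}\sqrt{1-t})^T(\u^{(p,1)}\sqrt{t}+\u^{(2)}\sqrt{1-t})}{B^{(i)}B^{(p)}}$ is nonnegative, together with $1-(\x^{(i)})^T\x^{(p)}\geq 0$ since the $\x^{(i)}$ lie on $S^{n-1}$ --- we have $d\psi(\calX,\beta,s,\tau)/d\tau\leq 0$ for all $\tau\in(0,1)$, so $\tau\mapsto\psi(\calX,\beta,s,\tau)$ is nonincreasing on $[0,1]$. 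Evaluating at the three points $0\leq t\leq 1$ gives $\psi(\calX,\beta,s,0)\geq\psi(\calX,\beta,s,t)\geq\psi(\calX,\beta,s,1)$; equivalently, $\psi(\calX,\beta,s,t)-\psi(\calX,\beta,s,0)=\int_0^t \frac{d\psi}{d\tau}\,d\tau\leq 0$ and $\psi(\calX,\beta,s,1)-\psi(\calX,\beta,s,t)=\int_t^1 \frac{d\psi}{d\tau}\,d\tau\leq 0$, which is (\ref{eq:co1eq2}).

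The only non-routine point is the regularity step: one must make sure the differentiation under $\mE$ and the integration-by-parts manipulations are justified uniformly near the endpoints $t=0,1$, i.e. that the $1/\sqrt{t}$ and $1/\sqrt{1-t}$ factors appearing in the individual terms of (\ref{eq:genanal11}) genuinely cancel in the aggregate and leave a derivative that remains bounded as $t\to 0^+$ and $t\to 1^-$. Once that is in hand, everything else is the fundamental theorem of calculus and monotonicity.
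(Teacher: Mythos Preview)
Your proposal is correct and follows the same approach as the paper: both derive the corollary from Theorem~\ref{thm:thm1} via the fundamental theorem of calculus and the sign of $\frac{d\psi}{dt}$. The paper's own proof is the single line ``Follows trivially from the above theorem by noting that $\frac{d\psi(\calX,\beta,s,t)}{dt}\leq 0$,'' so your added regularity discussion (domination, cancellation of the $1/\sqrt{t}$ and $1/\sqrt{1-t}$ factors, continuity at the endpoints) goes beyond what the paper records but is exactly the justification one would want to make the ``trivial'' step rigorous.
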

\begin{proof}
  Follows trivially from the above theorem by noting that $\frac{d\psi(\calX,\beta,s,t)}{dt}\leq 0$.
\end{proof}

%%%%%%%%%%%%%%%%%%%%%%%%%%%%%%%%%%%%%%%%%%%%%%%%%%%%%%%%%%%%%%%%%
\subsection{The power of simulations}
\label{sec:genconsim}
%%%%%%%%%%%%%%%%%%%%%%%%%%%%%%%%%%%%%%%%%%%%%%%%%%%%%%%%%%%%%%%%%

In this section we present a few numerical results that showcase the strength of the above theoretical results as well as the ultimate power of numerical simulations. We chose $m=5$, $n=5$, $l=10$, and selected set $\calX$ as the columns of the following matrix
\begin{equation}
X^{+}=\begin{bmatrix}
-0.7998 & 0.1004 & -0.7599 & 0.6616 & 0.5864 & -0.4010 & -0.0148 & -0.8320 & 0.3187 & -0.4861 \\
0.1760 & 0.0704 & 0.1056 & -0.1369 & -0.6259 & -0.5289 & -0.3740 & 0.3140 & 0.6299 & -0.5494 \\
0.0806 & -0.9085 & -0.3381 & -0.1970 & -0.1438 & 0.4863 & 0.5832 & 0.0840 & -0.2299 & -0.2647 \\
0.5487 & -0.3120 & -0.5447 & 0.5673 & 0.4870 & -0.5239 & 0.0407 & -0.2955 & 0.3913 & 0.5113 \\
-0.1476 & 0.2497 & -0.0208 & 0.4276 & 0.0808 & -0.2202 & -0.7198 & 0.3389 & 0.5438 & -0.3611
\end{bmatrix}.
\end{equation}
In other words, we have
\begin{equation}\label{eq:sim1}
  \calX^{+}=\{X^{+}_{:,1},X^{+}_{:,2},\dots,X^{+}_{:,l}\}.
\end{equation}
Clearly, set $\calX^{+}$ (and matrix $X^{+}$) is totally random, i.e. there is nothing specific about it besides that the norm of each of its elements is equal to one. For such a set we then simulated derivatives $\frac{d\psi(\calX,\beta,s,t)}{dt}$ according to (\ref{eq:genanal11}) (which we refer to as the standard interpolation) and according to (\ref{eq:conalt5}) (which we refer to as the computed interpolation) and computed $\psi(\calX,\beta,s,t)$ according to (\ref{eq:co1eq1}). We also simulated $\psi(\calX,\beta,s,t)$ directly (i.e. without interpolating computations) through (\ref{eq:genanal8}). Throughout all simulations, every quantity of interest that needed to be averaged was averaged over a set of $3e4$ experiments. Also, through all the simulations we set $\beta=10$. We simulated two different scenarios with all other parameters being the same: 1) $s=1$ and 2) $s=-1$.

\textbf{\underline{\emph{1) $s=1$ -- numerical results}}}

The results that we obtained for $s=1$ are presented in Figure \ref{fig:gensplus1xnorm1psi} and Table \ref{tab:gensplus1xnorm1psi}. Figure \ref{fig:gensplus1xnorm1psi} shows the entire range for $t$ (i.e. its shows the values for $t\in(0,1)$) whereas Table \ref{tab:gensplus1xnorm1psi} focuses on several concrete values of $t$ and shows obtained values of all key quantities. As both, Figure \ref{fig:gensplus1xnorm1psi} and Table \ref{tab:gensplus1xnorm1psi}, show, the agreement between all presented results is rather overwhelming.

\begin{figure}[htb]
%\begin{minipage}[b]{.5\linewidth}
\centering
\centerline{\epsfig{figure=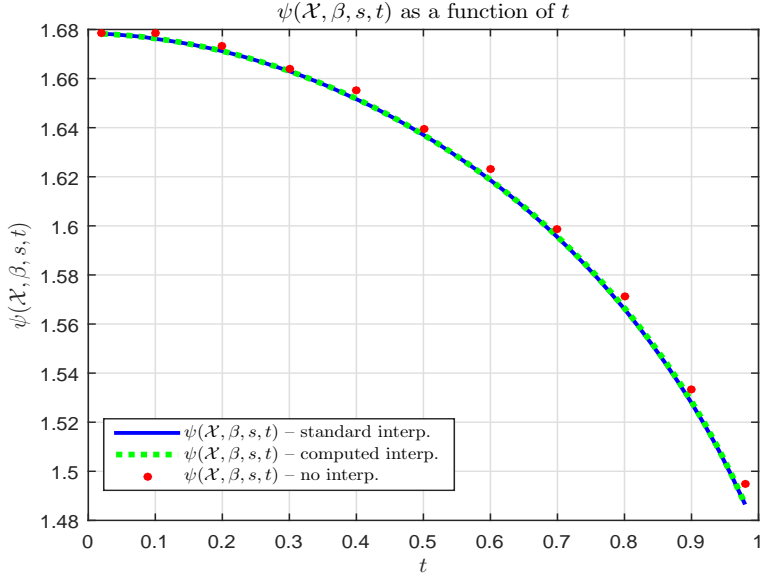,width=11.5cm,height=8cm}}
%\end{minipage}
%\begin{minipage}[b]{.5\linewidth}
%\centering
%\centerline{\epsfig{figure=finprerral08.eps,width=9cm,height=6.5cm}}
%\end{minipage}
\caption{$\psi(\calX,\beta,s,t)$ as a function of $t$; $m=5$, $n=5$, $l=10$, $\calX=\calX^{+}$, $\beta=3$, $s=1$}
\label{fig:gensplus1xnorm1psi}
\end{figure}
\begin{table}[h]
\caption{Simulated results --- $m=5$, $n=5$, $l=10$, $\calX=\calX^{+}$, $\beta=3$, $s=1$}\vspace{.1in}
\hspace{-0in}\centering
\begin{tabular}{||c||c|c|c|c|c||}\hline\hline
$ t$  &  $\frac{d\psi}{dt}$; (\ref{eq:genanal11}) & $\frac{d\psi}{dt}$;  (\ref{eq:conalt5}) & $\psi$;  (\ref{eq:genanal11}) and (\ref{eq:co1eq1}) & $\psi$; (\ref{eq:conalt5}) and (\ref{eq:co1eq1}) & $\psi$;  (\ref{eq:genanal8})\\  \hline\hline
$ 0.1000 $ & $ -0.0411 $ & $ -0.0331 $ & $\bl{\mathbf{ 1.6762 }}$ & $\bl{\mathbf{ 1.6763 }}$ & $\mathbf{ 1.6787 }$  \\ \hline
$ 0.2000 $ & $ -0.0651 $ & $ -0.0635 $ & $\bl{\mathbf{ 1.6711 }}$ & $\bl{\mathbf{ 1.6712 }}$ & $\mathbf{ 1.6733 }$  \\ \hline
$ 0.3000 $ & $ -0.0932 $ & $ -0.0937 $ & $\bl{\mathbf{ 1.6630 }}$ & $\bl{\mathbf{ 1.6631 }}$ & $\mathbf{ 1.6640 }$  \\ \hline
$ 0.4000 $ & $ -0.1251 $ & $ -0.1258 $ & $\bl{\mathbf{ 1.6516 }}$ & $\bl{\mathbf{ 1.6518 }}$ & $\mathbf{ 1.6551 }$  \\ \hline
$ 0.5000 $ & $ -0.1610 $ & $ -0.1613 $ & $\bl{\mathbf{ 1.6371 }}$ & $\bl{\mathbf{ 1.6371 }}$ & $\mathbf{ 1.6395 }$  \\ \hline
$ 0.6000 $ & $ -0.2041 $ & $ -0.2005 $ & $\bl{\mathbf{ 1.6186 }}$ & $\bl{\mathbf{ 1.6187 }}$ & $\mathbf{ 1.6232 }$  \\ \hline
$ 0.7000 $ & $ -0.2541 $ & $ -0.2528 $ & $\bl{\mathbf{ 1.5954 }}$ & $\bl{\mathbf{ 1.5956 }}$ & $\mathbf{ 1.5986 }$  \\ \hline
$ 0.8000 $ & $ -0.3185 $ & $ -0.3219 $ & $\bl{\mathbf{ 1.5661 }}$ & $\bl{\mathbf{ 1.5665 }}$ & $\mathbf{ 1.5711 }$  \\ \hline
$ 0.9000 $ & $ -0.4287 $ & $ -0.4277 $ & $\bl{\mathbf{ 1.5279 }}$ & $\bl{\mathbf{ 1.5285 }}$ & $\mathbf{ 1.5336 }$  \\ \hline \hline
\end{tabular}
\label{tab:gensplus1xnorm1psi}
\end{table}

\textbf{\underline{\emph{2) $s=-1$ -- numerical results}}}

The results that we obtained for $s=-1$ are presented in Figure \ref{fig:gensmin1xnorm1psi} and Table \ref{tab:gensmin1xnorm1psi}. Figure \ref{fig:gensmin1xnorm1psi} again shows the entire range for $t$, whereas Table \ref{tab:gensmin1xnorm1psi} focuses on several concrete values of $t$ and shows explicitly the obtained results. As was the case for $s=1$, here we again have that both, Figure \ref{fig:gensmin1xnorm1psi} and Table \ref{tab:gensmin1xnorm1psi}, show that the agreement between all presented results is rather overwhelming.

\begin{figure}[htb]
%\begin{minipage}[b]{.5\linewidth}
\centering
\centerline{\epsfig{figure=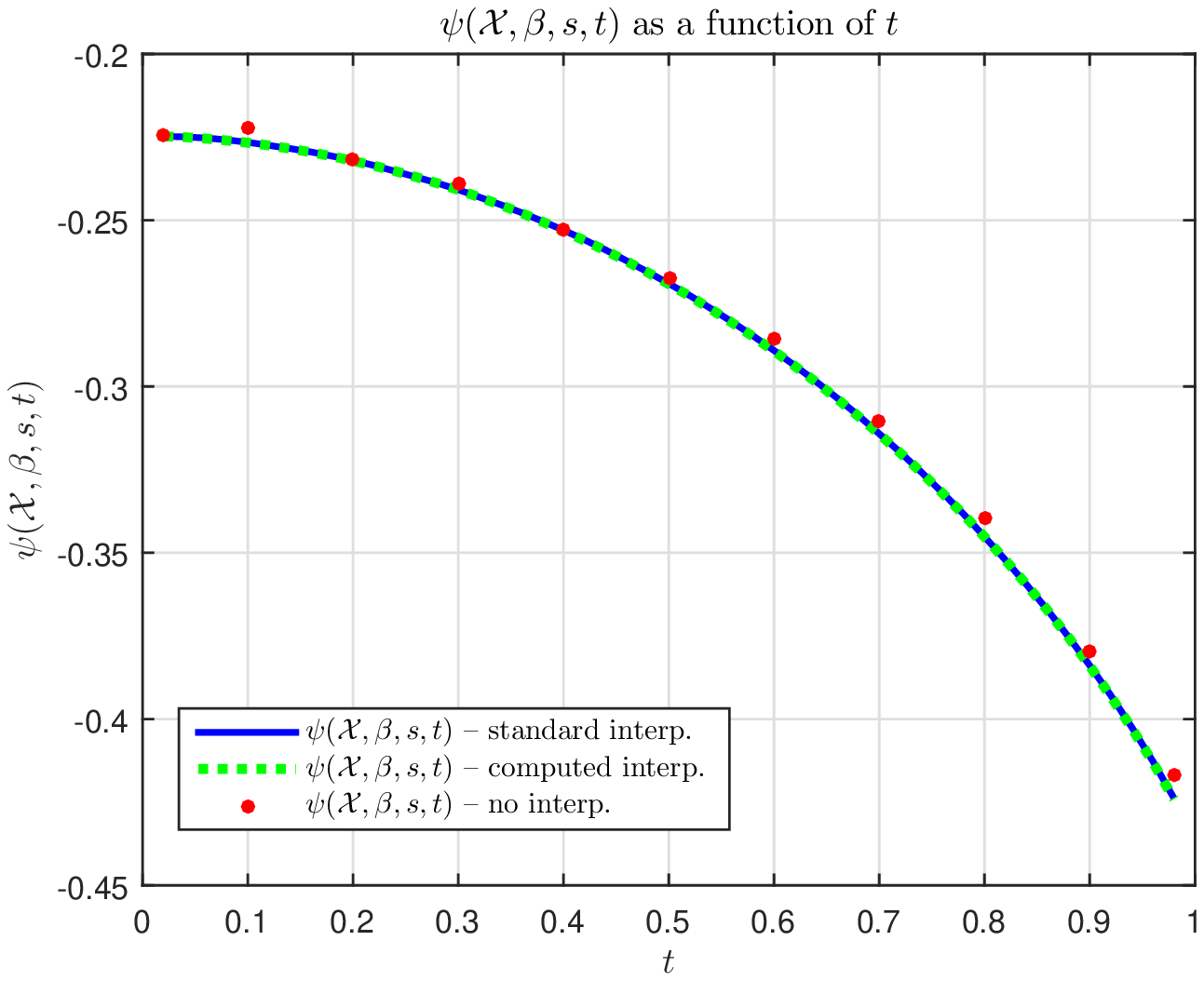,width=11.5cm,height=8cm}}
%\end{minipage}
%\begin{minipage}[b]{.5\linewidth}
%\centering
%\centerline{\epsfig{figure=finprerral08.eps,width=9cm,height=6.5cm}}
%\end{minipage}
\caption{$\psi(\calX,\beta,s,t)$ as a function of $t$; $m=5$, $n=5$, $l=10$, $\calX=\calX^{+}$, $\beta=3$, $s=-1$}
\label{fig:gensmin1xnorm1psi}
\end{figure}
\begin{table}[h]
\caption{Simulated results --- $m=5$, $n=5$, $l=10$, $\calX=\calX^{+}$, $\beta=3$, $s=-1$}\vspace{.1in}
\hspace{-0in}\centering
\begin{tabular}{||c||c|c|c|c|c||}\hline\hline
$ t$  &  $\frac{d\psi}{dt}$; (\ref{eq:genanal11}) & $\frac{d\psi}{dt}$;  (\ref{eq:conalt5}) & $\psi$;  (\ref{eq:genanal11}) and (\ref{eq:co1eq1}) & $\psi$; (\ref{eq:conalt5}) and (\ref{eq:co1eq1}) & $\psi$;  (\ref{eq:genanal8})\\  \hline\hline
$ 0.1000 $ & $ -0.0363 $ & $ -0.0344 $ & $\bl{\mathbf{ -0.2266 }}$ & $\bl{\mathbf{ -0.2267 }}$ & $\mathbf{ -0.2224 }$\\ \hline
$ 0.2000 $ & $ -0.0655 $ & $ -0.0680 $ & $\bl{\mathbf{ -0.2320 }}$ & $\bl{\mathbf{ -0.2321 }}$ & $\mathbf{ -0.2320 }$\\ \hline
$ 0.3000 $ & $ -0.0998 $ & $ -0.1010 $ & $\bl{\mathbf{ -0.2408 }}$ & $\bl{\mathbf{ -0.2409 }}$ & $\mathbf{ -0.2391 }$\\ \hline
$ 0.4000 $ & $ -0.1333 $ & $ -0.1377 $ & $\bl{\mathbf{ -0.2530 }}$ & $\bl{\mathbf{ -0.2532 }}$ & $\mathbf{ -0.2525 }$\\ \hline
$ 0.5000 $ & $ -0.1737 $ & $ -0.1751 $ & $\bl{\mathbf{ -0.2690 }}$ & $\bl{\mathbf{ -0.2692 }}$ & $\mathbf{ -0.2676 }$\\ \hline
$ 0.6000 $ & $ -0.2208 $ & $ -0.2189 $ & $\bl{\mathbf{ -0.2892 }}$ & $\bl{\mathbf{ -0.2893 }}$ & $\mathbf{ -0.2854 }$\\ \hline
$ 0.7000 $ & $ -0.2768 $ & $ -0.2716 $ & $\bl{\mathbf{ -0.3142 }}$ & $\bl{\mathbf{ -0.3143 }}$ & $\mathbf{ -0.3106 }$\\ \hline
$ 0.8000 $ & $ -0.3431 $ & $ -0.3348 $ & $\bl{\mathbf{ -0.3452 }}$ & $\bl{\mathbf{ -0.3452 }}$ & $\mathbf{ -0.3399 }$ \\ \hline
$ 0.9000 $ & $ -0.4270 $ & $ -0.4283 $ & $\bl{\mathbf{ -0.3839 }}$ & $\bl{\mathbf{ -0.3840 }}$ & $\mathbf{ -0.3798 }$\\ \hline \hline
\end{tabular}
\label{tab:gensmin1xnorm1psi}
\end{table}

%%%%%%%%%%%%%%%%%%%%%%%%%%%%%%%%%%%%%%%%%%%%%%%%%%%%%%%%%%%%%%%%%
\subsection{$\beta\rightarrow \infty$}
\label{sec:betainf}
%%%%%%%%%%%%%%%%%%%%%%%%%%%%%%%%%%%%%%%%%%%%%%%%%%%%%%%%%%%%%%%%%

It is often of particular interest to study the following limiting behavior of $\xi(\calX,\beta,s)$
\begin{eqnarray}\label{eq:betainf1}
\lim_{\beta\rightarrow\infty} \xi(\calX,\beta,s)=\lim_{\beta\rightarrow\infty} \mE_G \frac{1}{\beta\sqrt{n}} \log\lp \sum_{i=1}^{l}e^{\beta\lp s\|
 G\x^{(i)}\|_2\rp} \rp=\mE_G \frac{\max_{\x^{(i)}\in \calX} \lp s\|
 G\x^{(i)}\|_2\rp}{\sqrt{n}},
\end{eqnarray}

%%%%%%%%%%%%%%%%%%%%%%%%%%%%%%%%%%%%%%%%%%%%%%%%%%%%%%%%%%%%%%%%%
\subsubsection{$s=1$ -- a Slepian's comparison principle}
\label{sec:betainfsplus1}
%%%%%%%%%%%%%%%%%%%%%%%%%%%%%%%%%%%%%%%%%%%%%%%%%%%%%%%%%%%%%%%%%

In particular when $s=1$ we have
\begin{eqnarray}\label{eq:betainfsplus1}
\lim_{\beta\rightarrow\infty} \xi(\calX,\beta,1)=\mE_G \frac{\max_{\x^{(i)}\in \calX} \lp \|
 G\x^{(i)}\|_2\rp}{\sqrt{n}}.
\end{eqnarray}
Recalling that $\xi(\calX,\beta,1)=\psi(\calX,\beta,1,1)$ and utilizing the above machinery we have
\begin{eqnarray}\label{eq:betainfsplus2}
\mE_G \frac{\max_{\x^{(i)}\in \calX} \lp \|
 G\x^{(i)}\|_2\rp}{\sqrt{n}} & = & \lim_{\beta\rightarrow\infty} \xi(\calX,\beta,1)=
 \lim_{\beta\rightarrow\infty} \psi(\calX,\beta,1,1) \nonumber \\
& \leq &  \lim_{\beta\rightarrow\infty} \psi(\calX,\beta,1,0)=
\mE_{\u^{(2)},\h} \frac{\max_{\x^{(i)}\in \calX} \lp \|\u^{(2)}\|_2 +\h^T\x^{(i)}\rp}{\sqrt{n}}.
\end{eqnarray}
(\ref{eq:betainfsplus2}) is of course a form of the well-known Slepian comparison principle \cite{Slep62}. Namely, based on the Slepian comparison principle one has
\begin{eqnarray}\label{eq:betainfsplus2}
\mE_G \frac{\max_{\x^{(i)}\in \calX} \lp \|
 G\x^{(i)}\|_2\rp}{\sqrt{n}} & = & \mE_G \frac{\max_{\x^{(i)}\in \calX,\y\in S^{m-1}}  \y^T G\x^{(i)}}{\sqrt{n}}\nonumber \\
 & \leq & \mE_{\u^{(2)},\h} \frac{\max_{\x^{(i)}\in \calX,\y\in S^{m-1}} \lp \y^T\u^{(2)} +\h^T\x^{(i)}\rp}{\sqrt{n}}\nonumber \\
 & = &
\mE_{\u^{(2)},\h} \frac{\max_{\x^{(i)}\in \calX} \lp \|\u^{(2)}\|_2 +\h^T\x^{(i)}\rp}{\sqrt{n}}.\nonumber \\
\end{eqnarray}
Of course, this form is only a special case of a much stronger concept introduced in Theorem \ref{thm:thm1}.

\vspace{.1in}
\textbf{\underline{\emph{Numerical results}}}

We below show in Figure \ref{fig:genbetainfsplus1xnorm1psi} and Table \ref{tab:genbetainfsplus1xnorm1psi} simulated results. All parameters are the same as earlier, with the exception that now $\beta=10$, which in a way emulates $\beta\rightarrow\infty$.
As can be seen from both, Figure \ref{fig:genbetainfsplus1xnorm1psi} and Table \ref{tab:genbetainfsplus1xnorm1psi}, the agreement between all presented results is again overwhelming. Moreover, even rather small value of $\beta$, namely, $\beta=10$, is already a fairly good approximation of $\beta\rightarrow\infty$.

%\begin{figure}[htb]
%%\begin{minipage}[b]{.5\linewidth}
%\centering
%\centerline{\epsfig{figure=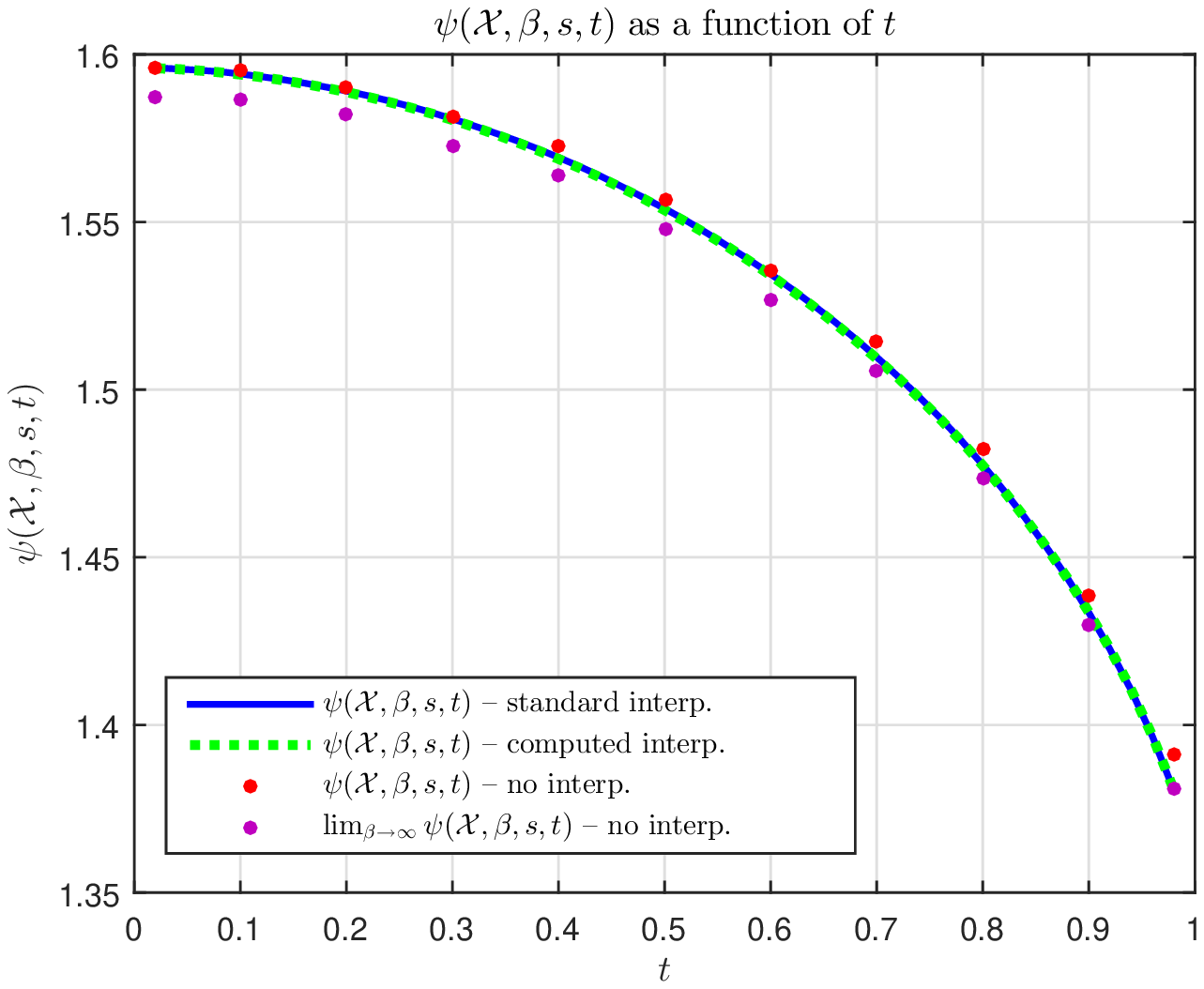,width=11.5cm,height=8cm}}
%%\end{minipage}
%%\begin{minipage}[b]{.5\linewidth}
%%\centering
%%\centerline{\epsfig{figure=finprerral08.eps,width=9cm,height=6.5cm}}
%%\end{minipage}
%\caption{$\psi(\calX,\beta,s,t)$ as a function of $t$; $m=5$, $n=5$, $l=10$, $\calX=\calX^{+}$, $\beta=10$, $s=1$}
%\label{fig:gensplus1xnorm1psi}
%\end{figure}

\begin{figure}[htb]
\begin{minipage}[b]{.5\linewidth}
\centering
\centerline{\epsfig{figure=splus1xnorm1beta10psiPAP.eps,width=9cm,height=7cm}}
%\end{minipage}
%\begin{minipage}[b]{.5\linewidth}
%\centering
%\centerline{\epsfig{figure=finprerral08.eps,width=9cm,height=6.5cm}}
\end{minipage}
\begin{minipage}[b]{.5\linewidth}
\centering
\centerline{\epsfig{figure=,width=9cm,height=7cm}}
%\end{minipage}
%\begin{minipage}[b]{.5\linewidth}
%\centering
%\centerline{\epsfig{figure=finprerral08.eps,width=9cm,height=6.5cm}}
\end{minipage}
\caption{Left -- $\psi(\calX,\beta,s,t)$ as a function of $t$; $m=5$, $n=5$, $l=10$, $\calX=\calX^{+}$, $\beta=10$, $s=1$; right -- comparison between $\beta=3$ and $\beta=10$}
\label{fig:genbetainfsplus1xnorm1psi}
\end{figure}

\begin{table}[h]
\caption{Simulated results --- $m=5$, $n=5$, $l=10$, $\calX=\calX^{+}$, $\beta=10$, $s=1$}\vspace{.1in}
\hspace{-0in}\centering
\begin{tabular}{||c||c|c|c|c|c|c||}\hline\hline
$ t$  &  $\frac{d\psi}{dt}$; (\ref{eq:genanal11}) & $\frac{d\psi}{dt}$;  (\ref{eq:conalt5}) & $\psi$;  (\ref{eq:genanal11}) and (\ref{eq:co1eq1}) & $\psi$; (\ref{eq:conalt5}) and (\ref{eq:co1eq1}) & $\psi$;  (\ref{eq:genanal8})& $\lim_{\beta\rightarrow\infty}\psi$;  (\ref{eq:genanal8})\\  \hline\hline
$ 0.1000 $ & $ -0.0334 $ & $ -0.0332 $ & $\bl{\mathbf{ 1.5942 }}$ & $\bl{\mathbf{ 1.5940 }}$ & $\mathbf{ 1.5951 }$& $\prp{\mathbf{ 1.5866 }}$\\ \hline
$ 0.2000 $ & $ -0.0616 $ & $ -0.0648 $ & $\bl{\mathbf{ 1.5891 }}$ & $\bl{\mathbf{ 1.5888 }}$ & $\mathbf{ 1.5904 }$& $\prp{\mathbf{ 1.5819 }}$\\ \hline
$ 0.3000 $ & $ -0.0953 $ & $ -0.0960 $ & $\bl{\mathbf{ 1.5808 }}$ & $\bl{\mathbf{ 1.5805 }}$ & $\mathbf{ 1.5814 }$& $\prp{\mathbf{ 1.5729 }}$\\ \hline
$ 0.4000 $ & $ -0.1293 $ & $ -0.1296 $ & $\bl{\mathbf{ 1.5693 }}$ & $\bl{\mathbf{ 1.5688 }}$ & $\mathbf{ 1.5727 }$& $\prp{\mathbf{ 1.5642 }}$\\ \hline
$ 0.5000 $ & $ -0.1666 $ & $ -0.1692 $ & $\bl{\mathbf{ 1.5540 }}$ & $\bl{\mathbf{ 1.5536 }}$ & $\mathbf{ 1.5565 }$& $\prp{\mathbf{ 1.5481 }}$\\ \hline
$ 0.6000 $ & $ -0.2115 $ & $ -0.2144 $ & $\bl{\mathbf{ 1.5345 }}$ & $\bl{\mathbf{ 1.5341 }}$ & $\mathbf{ 1.5356 }$& $\prp{\mathbf{ 1.5271 }}$\\ \hline
$ 0.7000 $ & $ -0.2733 $ & $ -0.2716 $ & $\bl{\mathbf{ 1.5097 }}$ & $\bl{\mathbf{ 1.5093 }}$ & $\mathbf{ 1.5144 }$& $\prp{\mathbf{ 1.5058 }}$\\ \hline
$ 0.8000 $ & $ -0.3604 $ & $ -0.3558 $ & $\bl{\mathbf{ 1.4775 }}$ & $\bl{\mathbf{ 1.4774 }}$ & $\mathbf{ 1.4820 }$& $\prp{\mathbf{ 1.4732 }}$\\ \hline
$ 0.9000 $ & $ -0.5081 $ & $ -0.5016 $ & $\bl{\mathbf{ 1.4335 }}$ & $\bl{\mathbf{ 1.4336 }}$ & $\mathbf{ 1.4388 }$& $\prp{\mathbf{ 1.4296 }}$
\\ \hline\hline
\end{tabular}
\label{tab:genbetainfsplus1xnorm1psi}
\end{table}

%%%%%%%%%%%%%%%%%%%%%%%%%%%%%%%%%%%%%%%%%%%%%%%%%%%%%%%%%%%%%%%%%
\subsubsection{$s=-1$ -- a Gordon's comparison principle}
\label{sec:betainfsminus1}
%%%%%%%%%%%%%%%%%%%%%%%%%%%%%%%%%%%%%%%%%%%%%%%%%%%%%%%%%%%%%%%%%

When $s=-1$ we have
\begin{eqnarray}\label{eq:betainfsminus1}
\lim_{\beta\rightarrow\infty} \xi(\calX,\beta,-1)=\mE_G \frac{\max_{\x^{(i)}\in \calX} \lp - \|
 G\x^{(i)}\|_2\rp}{\sqrt{n}}= -\mE_G \frac{\min_{\x^{(i)}\in \calX} \lp  \|
 G\x^{(i)}\|_2\rp}{\sqrt{n}}.
\end{eqnarray}
Relying again on $\xi(\calX,\beta,1)=\psi(\calX,\beta,1,1)$ and the above machinery we have
\begin{eqnarray}\label{eq:betainfsminus2}
-\mE_G \frac{\min_{\x^{(i)}\in \calX} \lp \|
 G\x^{(i)}\|_2\rp}{\sqrt{n}} & = & \lim_{\beta\rightarrow\infty} \xi(\calX,\beta,1)=
 \lim_{\beta\rightarrow\infty} \psi(\calX,\beta,1,1) \nonumber \\
& \leq &  \lim_{\beta\rightarrow\infty} \psi(\calX,\beta,1,0)=
\mE_{\u^{(2)},\h} \frac{\max_{\x^{(i)}\in \calX} \lp -\|\u^{(2)}\|_2 +\h^T\x^{(i)}\rp}{\sqrt{n}} \nonumber \\
& = &
- \mE_{\u^{(2)},\h} \frac{\min_{\x^{(i)}\in \calX} \lp \|\u^{(2)}\|_2 -\h^T\x^{(i)}\rp}{\sqrt{n}}.
\end{eqnarray}
(\ref{eq:betainfsminus2}) is of course a form of the well-known Gordon comparison principle \cite{Gordon85}. Gordon's principle is an upgrade on the above Slepian's comparison principle, and assumes the following
\begin{eqnarray}\label{eq:betainfsminus3}
\mE_G \frac{\min_{\x^{(i)}\in \calX} \lp \|
 G\x^{(i)}\|_2\rp}{\sqrt{n}} & = & \mE_G \frac{\min_{\x^{(i)}\in \calX}\max_{\y\in S^{m-1}}  \y^T G\x^{(i)}}{\sqrt{n}}\nonumber \\
 & \geq & \mE_{\u^{(2)},\h} \frac{\min_{\x^{(i)}\in \calX}\max_{\y\in S^{m-1}} \lp \y^T\u^{(2)} +\h^T\x^{(i)}\rp}{\sqrt{n}} \nonumber \\
 & = &
\mE_{\u^{(2)},\h} \frac{\min_{\x^{(i)}\in \calX} \lp \|\u^{(2)}\|_2 +\h^T\x^{(i)}\rp}{\sqrt{n}}.\nonumber \\
\end{eqnarray}
Clearly, connecting beginning and end in both, (\ref{eq:betainfsminus2}) and (\ref{eq:betainfsminus3}) we obtain the same inequalities ($-\h$ and $\h$ have the same distribution). As was the case above with the Slepian's form, forms,  (\ref{eq:betainfsminus2}) and (\ref{eq:betainfsminus3}) are again only a special case of a much stronger concept introduced in Theorem \ref{thm:thm1}.

%\begin{figure}[htb]
%%\begin{minipage}[b]{.5\linewidth}
%\centering
%\centerline{\epsfig{figure=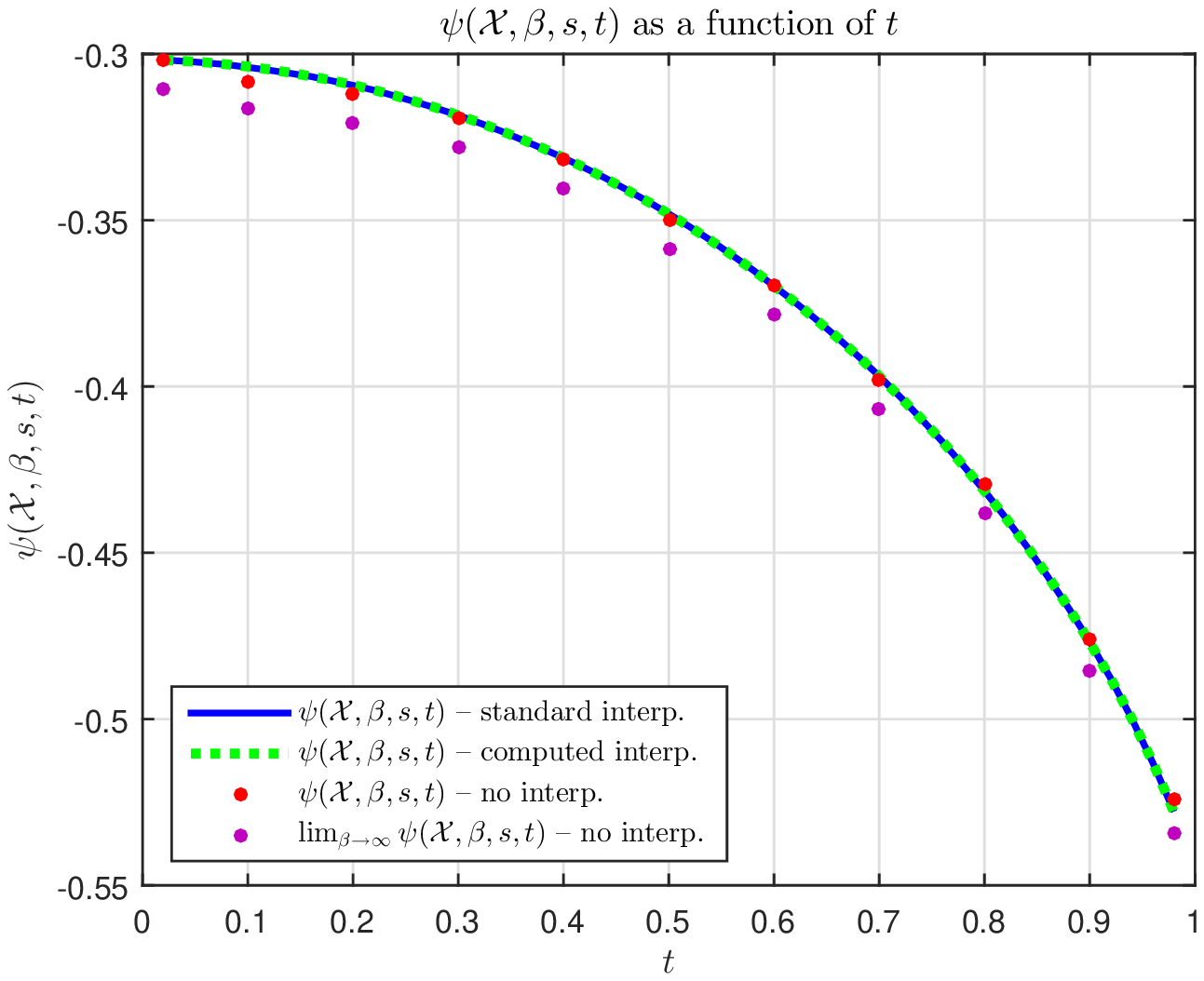,width=11.5cm,height=8cm}}
%%\end{minipage}
%%\begin{minipage}[b]{.5\linewidth}
%%\centering
%%\centerline{\epsfig{figure=finprerral08.eps,width=9cm,height=6.5cm}}
%%\end{minipage}
%\caption{$\psi(\calX,\beta,s,t)$ as a function of $t$; $m=5$, $n=5$, $l=10$, $\calX=\calX^{+}$, $\beta=10$, $s=-1$}
%\label{fig:gensmin1xnorm1psi}
%\end{figure}

\textbf{\underline{\emph{Numerical results}}}

In Figure \ref{fig:genbetainfsmin1xnorm1psi} and Table \ref{tab:genbetainfsmin1xnorm1psi} simulated results are shown. All parameters are again the same as earlier, with the exception that now $\beta=10$ which again in a way emulates $\beta\rightarrow\infty$.
Both, Figure \ref{fig:genbetainfsmin1xnorm1psi} and Table \ref{tab:genbetainfsmin1xnorm1psi}, again demonstrate a solid agreement between all the presented results with $\beta=10$ being a pretty good approximation of $\beta\rightarrow\infty$.

%\begin{figure}[htb]
%%\begin{minipage}[b]{.5\linewidth}
%\centering
%\centerline{\epsfig{figure=splus1xnorm1beta10psiPAP.eps,width=11.5cm,height=8cm}}
%%\end{minipage}
%%\begin{minipage}[b]{.5\linewidth}
%%\centering
%%\centerline{\epsfig{figure=finprerral08.eps,width=9cm,height=6.5cm}}
%%\end{minipage}
%\caption{$\psi(\calX,\beta,s,t)$ as a function of $t$; $m=5$, $n=5$, $l=10$, $\calX=\calX^{+}$, $\beta=10$, $s=1$}
%\label{fig:gensplus1xnorm1psi}
%\end{figure}

\begin{figure}[htb]
\begin{minipage}[b]{.5\linewidth}
\centering
\centerline{\epsfig{figure=smin1xnorm1beta10psiPAP.eps,width=9cm,height=7cm}}
%\end{minipage}
%\begin{minipage}[b]{.5\linewidth}
%\centering
%\centerline{\epsfig{figure=finprerral08.eps,width=9cm,height=6.5cm}}
\end{minipage}
\begin{minipage}[b]{.5\linewidth}
\centering
\centerline{\epsfig{figure=,width=9cm,height=7cm}}
%\end{minipage}
%\begin{minipage}[b]{.5\linewidth}
%\centering
%\centerline{\epsfig{figure=finprerral08.eps,width=9cm,height=6.5cm}}
\end{minipage}
\caption{Left -- $\psi(\calX,\beta,s,t)$ as a function of $t$; $m=5$, $n=5$, $l=10$, $\calX=\calX^{+}$, $\beta=10$, $s=-1$; right -- comparison between $\beta=3$ and $\beta=10$}
\label{fig:genbetainfsmin1xnorm1psi}
\end{figure}

\begin{table}[h]
\caption{Simulated results --- $m=5$, $n=5$, $l=10$, $\calX=\calX^{+}$, $\beta=10$, $s=-1$}\vspace{.1in}
\hspace{-0in}\centering
\begin{tabular}{||c||c|c|c|c|c|c||}\hline\hline
$ t$  &  $\frac{d\psi}{dt}$; (\ref{eq:genanal11}) & $\frac{d\psi}{dt}$;  (\ref{eq:conalt5}) & $\psi$;  (\ref{eq:genanal11}) and (\ref{eq:co1eq1}) & $\psi$; (\ref{eq:conalt5}) and (\ref{eq:co1eq1}) & $\psi$;  (\ref{eq:genanal8})& $\lim_{\beta\rightarrow\infty}\psi$;  (\ref{eq:genanal8})\\  \hline\hline
$ 0.1000 $ & $ -0.0398 $ & $ -0.0349 $ & $\bl{\mathbf{ -0.3040 }}$ & $\bl{\mathbf{ -0.3038 }}$ & $\mathbf{ -0.3080 }$& $\prp{\mathbf{ -0.3167 }}$ \\ \hline
$ 0.2000 $ & $ -0.0670 $ & $ -0.0693 $ & $\bl{\mathbf{ -0.3094 }}$ & $\bl{\mathbf{ -0.3093 }}$ & $\mathbf{ -0.3122 }$& $\prp{\mathbf{ -0.3209 }}$ \\ \hline
$ 0.3000 $ & $ -0.1047 $ & $ -0.1059 $ & $\bl{\mathbf{ -0.3185 }}$ & $\bl{\mathbf{ -0.3184 }}$ & $\mathbf{ -0.3195 }$& $\prp{\mathbf{ -0.3280 }}$ \\ \hline
$ 0.4000 $ & $ -0.1411 $ & $ -0.1444 $ & $\bl{\mathbf{ -0.3313 }}$ & $\bl{\mathbf{ -0.3312 }}$ & $\mathbf{ -0.3320 }$& $\prp{\mathbf{ -0.3405 }}$ \\ \hline
$ 0.5000 $ & $ -0.1866 $ & $ -0.1848 $ & $\bl{\mathbf{ -0.3481 }}$ & $\bl{\mathbf{ -0.3481 }}$ & $\mathbf{ -0.3500 }$& $\prp{\mathbf{ -0.3585 }}$ \\ \hline
$ 0.6000 $ & $ -0.2382 $ & $ -0.2366 $ & $\bl{\mathbf{ -0.3698 }}$ & $\bl{\mathbf{ -0.3697 }}$ & $\mathbf{ -0.3695 }$& $\prp{\mathbf{ -0.3780 }}$ \\ \hline
$ 0.7000 $ & $ -0.2995 $ & $ -0.2996 $ & $\bl{\mathbf{ -0.3969 }}$ & $\bl{\mathbf{ -0.3969 }}$ & $\mathbf{ -0.3978 }$& $\prp{\mathbf{ -0.4066 }}$ \\ \hline
$ 0.8000 $ & $ -0.3780 $ & $ -0.3806 $ & $\bl{\mathbf{ -0.4314 }}$ & $\bl{\mathbf{ -0.4314 }}$ & $\mathbf{ -0.4292 }$& $\prp{\mathbf{ -0.4381 }}$ \\ \hline
$ 0.9000 $ & $ -0.5042 $ & $ -0.5164 $ & $\bl{\mathbf{ -0.4767 }}$ & $\bl{\mathbf{ -0.4770 }}$ & $\mathbf{ -0.4762 }$& $\prp{\mathbf{ -0.4858 }}$ \\ \hline\hline
\end{tabular}
\label{tab:genbetainfsmin1xnorm1psi}
\end{table}

%%%%%%%%%%%%%%%%%%%%%%%%%%%%%%%%%%%%%%%%%%%%%%%%%%%%%%%%%%%%%%%%%
\subsection{Moving away from the unit sphere}
\label{sec:moving}
%%%%%%%%%%%%%%%%%%%%%%%%%%%%%%%%%%%%%%%%%%%%%%%%%%%%%%%%%%%%%%%%%

What we presented so far assumed a fairly general sets $\calX$. The only restriction was that their elements should have unit norm. Now, we will remove such a restriction and show how the results that we obtained earlier can be modified to account for fully general sets $\calX$. Let $\calX=\{\x^{(1)},\x^{(2)},\dots,\x^{(l)}\}$, where $\x^{(i)}\in \mR^n,1\leq i\leq l$, be a given set. We will be interested in the following function
\begin{eqnarray}\label{eq:Mgenanal1}
 f_{M}(G,u^{(4)},\calX,\beta,s)= \frac{1}{\beta\sqrt{n}} \log\lp \sum_{i=1}^{l}e^{\beta\lp s\|
 G\x^{(i)}\|_2+\|\x^{(i)}\|_2u^{(4)}\rp} \rp,
\end{eqnarray}
where all objects are as earlier, and $u^{(4)}$ is a standard normal random variable independent of $G$. Given that we continue to consider random medium where the expected values play the crucial role, we introduce the following non-spherical analogues to $\xi(\calX,\beta,s)$
\begin{eqnarray}\label{eq:Mgenanal2}
\xi_M(\calX,\beta,s)\triangleq \mE_{G,u^{(4)}} f_M(G,u^{(4)},\calX,\beta,s)= \mE_{G,u^{(4)}}\frac{1}{\beta\sqrt{n}} \log\lp \sum_{i=1}^{l}e^{\beta\lp s\|
 G\x^{(i)}\|_2+\|\x^{(i)}\|_2u^{(4)}\rp} \rp,
\end{eqnarray}
and to $\psi(\calX,\beta,s,t)$
\begin{eqnarray}\label{eq:Mgenanal3}
\psi_M(\calX,\beta,s,t)= \mE_{G,u^{(4)},\u^{(2)},\h} \frac{1}{\beta\sqrt{n}} \log\lp \sum_{i=1}^{l}e^{\beta\lp s\|\sqrt{t}
 G\x^{(i)}+\sqrt{1-t}\|\x^{(i)}\|_2\u^{(2)}\|_2+\sqrt{t}\|\x^{(i)}\|_2 u^{(4)}+\sqrt{1-t}\h^T\x^{(i)}\rp} \rp,
\end{eqnarray}
Clearly,$\xi_M(\calX,\beta,s)=\psi_M(\calX,\beta,s,1)$. As earlier, $\psi_M(\calX,\beta,s,0)$ is an object much easier to handle than $\psi_M(\calX,\beta,s,1)$. Below, we will follow the strategy from earlier sections and will try to connect $\psi_M(\calX,\beta,s,1)$ to $\psi_M(\calX,\beta,s,0)$ which will then automatically connect $\xi_M(\calX,\beta,s)$ to $\psi_M(\calX,\beta,s,0)$. We can also rewrite (\ref{eq:Mgenanal3}) in the following way
\begin{equation}\label{eq:Mgenanal6}
\psi_M(\calX,\beta,s,t)
 =  \mE_{\u^{(i,1)},\u^{(2)},\u^{(i,3)},u^{(4)}} \frac{1}{\beta\sqrt{n}} \log\lp \sum_{i=1}^{l}e^{\beta_i\lp s\sqrt{\sum_{j=1}^{m}\lp\sqrt{t}\u_j^{(i,1)}+\sqrt{1-t}\u_j^{(2)}\rp^2}+\sqrt{t} u^{(4)}+\sqrt{1-t}\u^{(i,3)}\rp} \rp.
\end{equation}
where $\u_j^{(i,1)}$ and $\u_j^{(i,3)}$ are as earlier (in a sense that they are products of vectors of i.i.d. standard normals and unit norm vectors $\frac{\x^{(i)}}{\|\x^{(i)}\|_2}$) and $\beta_i=\beta\|\x^{(i)}\|_2$. We recall on the definition of $B^{(i)}$ from (\ref{eq:genanal7}) and set
\begin{eqnarray}\label{eq:Mgenanal7}
B^{(i)} & \triangleq &  \sqrt{\sum_{j=1}^{m}\lp\sqrt{t}\u_j^{(i,1)}+\sqrt{1-t}\u_j^{(2)}\rp^2} \nonumber \\
A^{(i)}_M & \triangleq &  e^{\beta_i(sB^{(i)}+\sqrt{t}u^{(4)}+\sqrt{1-t}\u^{(i,3)})}\nonumber \\
Z_M & \triangleq & \sum_{i=1}^{l}e^{\beta_i\lp s\sqrt{\sum_{j=1}^{m}\lp\sqrt{t}\u_j^{(i,1)}+\sqrt{1-t}\u_j^{(2)}\rp^2}+\sqrt{t}u^{(4)}+\sqrt{1-t}\u^{(i,3)}\rp}= \sum_{i=1}^{l} A^{(i)}_M.
\end{eqnarray}
From (\ref{eq:Mgenanal6}) and (\ref{eq:Mgenanal7}) we obviously have
\begin{eqnarray}\label{eq:Mgenanal8}
\psi_M(\calX,\beta,s,t) & = &  \mE_{\u^{(i,1)},\u^{(2)},\u^{(i,3)},u^{(4)}} \frac{1}{\beta\sqrt{n}} \log(Z_M).
\end{eqnarray}
and analogously to (\ref{eq:genanal11})
\begin{multline}\label{eq:Mgenanal11}
\frac{d\psi_M(\calX,\beta,s,t)}{dt}
 =   \mE_{\u^{(i,1)},\u^{(2)},\u^{(i,3)},u^{(4)}} \frac{s}{2\sqrt{n}}\sum_{j=1}^{m} \sum_{i=1}^{l} \frac{ \|\x^{(i)}\|_2A^{(i)}_M\lp(\u_j^{(i,1)})^2-(\u_j^{(2)})^2+\u_j^{(i,1)}\u_j^{(2)}\lp\frac{\sqrt{1-t}}{\sqrt{t}}-\frac{\sqrt{t}}{\sqrt{1-t}}\rp \rp }{ZB^{(i)}}\\
 + \mE_{\u^{(i,1)},\u^{(2)},\u^{(i,3)},u^{(4)}} \frac{1}{2\sqrt{n}}  \sum_{i=1}^{l}
\frac{A^{(i)}_M\|\x^{(i)}\|_2 u^{(4)}}{Z_M\sqrt{t}}- \mE_{\u^{(i,1)},\u^{(2)},\u^{(i,3)},u^{(4)}} \frac{1}{2\sqrt{n}}  \sum_{i=1}^{l}
\frac{A^{(i)}_M\|\x^{(i)}\|_2\u^{(i,3)}}{Z_M\sqrt{1-t}}.
\end{multline}

%%%%%%%%%%%%%%%%%%%%%%%%%%%%%%%%%%%%%%%%%%%%%%%%%%%%%%%%%%%%%%%%%
\subsubsection{Handling the derivatives}
\label{sec:movingder}
%%%%%%%%%%%%%%%%%%%%%%%%%%%%%%%%%%%%%%%%%%%%%%%%%%%%%%%%%%%%%%%%%

Below we show how already computed quantities can be used to characterize each of the terms appearing in the above sums.

%%%%%%%%%%%%%%%%%%%%%%%%%%%%%%%%%%%%%%%%%%%%%%%%%%%%%%%%%%%%%%%%%%%%%%%%
\textbf{\underline{\emph{1) Computing $\mE_{\u^{(i,1)},\u^{(2)},\u^{(i,3)},u^{(4)}}  \frac{ A^{(i)}_M\u_j^{(i,1)}\u_j^{(2)}}{Z_MB^{(i)}}$}}}
%%%%%%%%%%%%%%%%%%%%%%%%%%%%%%%%%%%%%%%%%%%%%%%%%%%%%%%%%%%%%%%%%%%%%%%%

\noindent As in Section \ref{sec:gencon} we present two different characterizations.

\textbf{\underline{\emph{1.1) Fixing $\u^{(i,1)}$}}}

\noindent Similarly to what we had in (\ref{eq:genanal13}) we now have
\begin{eqnarray}\label{eq:Mgenanal13}
\mE_{\u^{(i,1)},\u^{(2)},\u^{(i,3)},u^{(4)}}  \frac{ A^{(i)}_M\u_j^{(i,1)}\u_j^{(2)}}{Z_MB^{(i)}} & = &\mE\lp
\sum_{p=1,p\neq i}^{l} \frac{(\x^{(i)})^T\x^{(p)}}{\|\x^{(i)}\|_2\|\x^{(p)}\|_2}\frac{d}{d\u_j^{(p,1)}}\lp \frac{ A^{(i)}_M\u_j^{(2)}}{Z_MB^{(i)}}\rp \rp\nonumber \\
& & +\mE\lp\frac{(\x^{(i)})^T\x^{(i)}}{\|\x^{(i)}\|_2\|\x^{(i)}\|_2}\frac{d}{d\u_j^{(i,1)}}\lp \frac{ A^{(i)}_M\u_j^{(2)}}{Z_MB^{(i)}}\rp\rp.
\end{eqnarray}
Following closely the derivations from Section \ref{sec:hand1} one can observe that every single step can be repeated and the only difference will be that $\beta$ changes to $\beta_i$ and now $E(\u_j^{(i,1)}\u_j^{(p,1)})=\frac{(\x^{(i)})^T\x^{(p)}}{\|\x^{(i)}\|_2\|\x^{(p)}\|_2}$ (also, we note right here that $E(\u_j^{(i,3)}\u_j^{(p,3)})=\frac{(\x^{(i)})^T\x^{(p)}}{\|\x^{(i)}\|_2\|\x^{(p)}\|_2}$ as well). Following  (\ref{eq:genanal19}) we then have
\begin{eqnarray}\label{eq:Mgenanal19}
\mE_{\u^{(i,1)},\u^{(2)},\u^{(i,3)},u^{(4)}}  \frac{ A^{(i)}_M\u_j^{(i,1)}\u_j^{(2)}}{Z_MB^{(i)}}
& = &
\mE\lp\frac{ A^{(i)}_M}{Z_MB^{(i)}}\lp \lp \beta_i s -\frac{1}{B^{(i)}}\rp\frac{\u_j^{(2)}\sqrt{1-t}+\sqrt{t}\u_j^{(i,1)}}{B^{(i)}}\u^{(2)}\sqrt{t}\rp\rp \nonumber  \\
& & -\mE\sum_{p=1}^{l} \frac{(\x^{(i)})^T\x^{(p)}}{\|\x^{(i)}\|_2\|\x^{(p)}\|_2} \frac{\beta_p s  A^{(i)}_MA^{(p)}_M\u_j^{(2)}\sqrt{t}(\u_j^{(p,1)}\sqrt{t}+\sqrt{1-t}\u_j^{(2)})}{Z_MB^{(i)}B^{(p)}}.\nonumber \\
\end{eqnarray}

\textbf{\underline{\emph{1.2) Fixing $\u^{(2)}$}}}

\noindent Similarly to what we had in (\ref{eq:genCanal1}) we now have
\begin{equation}\label{eq:MgenCanal1}
\mE_{\u^{(i,1)},\u^{(2)},\u^{(i,3)},u^{(4)}}  \frac{ A^{(i)}_M\u_j^{(2)}\u^{(i,1)}}{Z_MB^{(i)}} =
\mE\lp\mE (\u_j^{(2)}\u_j^{(2)})\frac{d}{d\u_j^{(2)}}\lp \frac{ A^{(i)}_M\u^{(i,1)}}{Z_MB^{(i)}}\rp\rp
=\mE\lp\u^{(i,1)}\frac{d}{d\u_j^{(2)}}\lp \frac{ A^{(i)}_M}{Z_MB^{(i)}}\rp\rp,
\end{equation}
and utilizing the above observations from (\ref{eq:genCanal5}) we have
\begin{eqnarray}\label{eq:MgenCanal5}
\mE_{\u^{(i,1)},\u^{(2)},\u^{(i,3)},u^{(4)}}  \frac{ A^{(i)}_M\u_j^{(i,1)}\u_j^{(2)}}{Z_MB^{(i)}}
 & = &
\mE\lp\frac{ A^{(i)}_M}{Z_MB^{(i)}}\lp \lp \beta_i s -\frac{1}{B^{(i)}}\rp\frac{\u_j^{(2)}\sqrt{1-t}+\sqrt{t}\u_j^{(i,1)}}{B^{(i)}}\u_j^{(i,1)}\sqrt{1-t}\rp\rp \nonumber \\
& &  -\mE\sum_{p=1}^{l}\frac{\beta_p s  A^{(i)}_MA^{(p)}_M\u_j^{(i,1)}\sqrt{1-t}}{Z_MB^{(i)}B^{(p)}} (\u_j^{(2)}\sqrt{1-t}+\sqrt{t}\u_j^{(p,1)}).\nonumber \\
\end{eqnarray}

%%%%%%%%%%%%%%%%%%%%%%%%%%%%%%%%%%%%%%%%%%%%%%%%%%%%%%%%%%%%%%%%%%%%%%%%
\textbf{\underline{\emph{2) Computing $\mE_{\u^{(i,1)},\u^{(2)},\u^{(i,3)},u^{(4)}}  \frac{ A^{(i)}_M(\u_j^{(2)})^2}{Z_MB^{(i)}}$}}}
%%%%%%%%%%%%%%%%%%%%%%%%%%%%%%%%%%%%%%%%%%%%%%%%%%%%%%%%%%%%%%%%%%%%%%%%

\noindent Following (\ref{eq:gen1anal1}) we have
\begin{equation}\label{eq:Mgen1anal1}
\mE_{\u^{(i,1)},\u^{(2)},\u^{(i,3)},u^{(4)}}  \frac{ A^{(i)}_M(\u_j^{(2)})^2}{Z_MB^{(i)}} =
\mE\lp\mE (\u_j^{(2)}\u_j^{(2)})\frac{d}{d\u_j^{(2)}}\lp \frac{ A^{(i)}_M\u_j^{(2)}}{Z_MB^{(i)}}\rp\rp=\mE\lp\frac{ A^{(i)}_M}{Z_MB^{(i)}}+\u_j^{(2)}\frac{d}{d\u_j^{(2)}}\lp \frac{ A^{(i)}_M}{Z_MB^{(i)}}\rp\rp.
\end{equation}
Utilizing the above reasoning we then easily have
\begin{eqnarray}\label{eq:Mgen1anal5}
\mE_{\u^{(i,1)},\u^{(2)},\u^{(i,3)},u^{(4)}}  \frac{ A^{(i)}_M(\u_j^{(2)})^2}{Z_MB^{(i)}}
 & = &
\mE\lp\frac{ A^{(i)}_M}{Z_MB^{(i)}}\lp 1 +\lp \beta_i s -\frac{1}{B^{(i)}}\rp\frac{\u_j^{(2)}\sqrt{1-t}+\sqrt{t}\u_j^{(i,1)}}{B^{(i)}}\u_j^{(2)}\sqrt{1-t}\rp\rp \nonumber \\
& &  -\mE\sum_{p=1}^{l}\frac{\beta_p s  A^{(i)}_MA^{(p)}_M\u_j^{(2)}\sqrt{1-t}}{Z_MB^{(i)}B^{(p)}} (\u_j^{(2)}\sqrt{1-t}+\sqrt{t}\u_j^{(p,1)}).\nonumber \\
\end{eqnarray}

%%%%%%%%%%%%%%%%%%%%%%%%%%%%%%%%%%%%%%%%%%%%%%%%%%%%%%%%%%%%%%%%%%%%%%%%
\textbf{\underline{\emph{3) Computing $\mE_{\u^{(i,1)},\u^{(2)},\u^{(i,3)},u^{(4)}}  \frac{ A^{(i)}_M(\u_j^{(i,1)})^2}{Z_MB^{(i)}}$}}}
%%%%%%%%%%%%%%%%%%%%%%%%%%%%%%%%%%%%%%%%%%%%%%%%%%%%%%%%%%%%%%%%%%%%%%%%

\noindent Following (\ref{eq:gen2anal12}) we have
\begin{eqnarray}\label{eq:Mgen2anal12}
\mE_{\u^{(i,1)},\u^{(2)},\u^{(i,3)},u^{(4)}}  \frac{ A^{(i)}_M\u_j^{(i,1)}\u_j^{(2)}}{Z_MB^{(i)}}  & = &
\mE\lp\sum_{p=1,p\neq i}^{l} \mE (\u_j^{(i,1)}\u_j^{(p,1)})\frac{d}{d\u_j^{(p,1)}}\lp \frac{ A^{(i)}_M\u_j^{(i,1)}}{Z_MB^{(i)}}\rp \rp \nonumber \\
& & +\mE\lp\mE (\u_j^{(i,1)}\u_j^{(i,1)})\frac{d}{d\u_j^{(i,1)}}\lp \frac{ A^{(i)}_M\u_j^{(i,1)}}{Z_MB^{(i)}}\rp\rp,
\end{eqnarray}
and through the above mentioned changes from (\ref{eq:gen2anal19})
\begin{eqnarray}\label{eq:Mgen2anal19}
\mE_{\u^{(i,1)},\u^{(2)},\u^{(i,3)},u^{(4)}}  \frac{ A^{(i)}_M(\u_j^{(i,1)})^2}{Z_MB^{(i)}}
& = &
\mE\lp\frac{ A^{(i)}_M}{Z_MB^{(i)}}\lp 1 +\lp \beta_i s -\frac{1}{B^{(i)}}\rp\frac{\u_j^{(2)}\sqrt{1-t}+\sqrt{t}\u_j^{(i,1)}}{B^{(i)}}\u_j^{(i,1)}\sqrt{t}\rp\rp \nonumber \\ & & -\mE\sum_{p=1}^{l} \frac{(\x^{(i)})^T\x^{(p)}}{\|\x^{(i)}\|_2\|\x^{(p)}\|_2} \frac{ \beta_p s A^{(i)}_MA^{(p)}_M\u_j^{(i,1)}\sqrt{t}(\u_j^{(p,1)}\sqrt{t}+\sqrt{1-t}\u_j^{(2)})}{Z_MB^{(i)}B^{(p)}}.\nonumber \\
\end{eqnarray}

%%%%%%%%%%%%%%%%%%%%%%%%%%%%%%%%%%%%%%%%%%%%%%%%%%%%%%%%%%%%%%%%%%%%%%%%
\textbf{\underline{\emph{4) Computing $\mE_{\u^{(i,1)},\u^{(2)},\u^{(i,3)},u^{(4)}}  \frac{ A^{(i)}_M\u^{(i,3)}}{Z_M}$}}}
%%%%%%%%%%%%%%%%%%%%%%%%%%%%%%%%%%%%%%%%%%%%%%%%%%%%%%%%%%%%%%%%%%%%%%%%

\noindent  From (\ref{eq:genDanal12}) we also have
\begin{equation}\label{eq:MgenDanal12}
\mE_{\u^{(i,1)},\u^{(2)},\u^{(i,3)},u^{(4)}}  \frac{ A^{(i)}_M\u^{(i,3)}}{Z_M} =
\mE\lp\sum_{p=1,p\neq i}^{l} \mE (\u^{(i,3)}\u^{(p,3)})\frac{d}{d\u^{(p,3)}}\lp \frac{ A^{(i)}_M}{Z_M}\rp
+\mE (\u^{(i,3)}\u^{(i,3)})\frac{d}{d\u^{(i,3)}}\lp \frac{ A^{(i)}_M}{Z_M}\rp\rp,
\end{equation}
and similarly as above from (\ref{eq:genDanal19})
\begin{equation}\label{eq:MgenDanal19}
\mE_{\u^{(i,1)},\u^{(2)},\u^{(i,3)},u^{(4)}}  \frac{ A^{(i)}_M\u^{(i,3)}}{Z_M}  =
 \mE\frac{\beta_i A^{(i)}_M\sqrt{1-t}}{Z_M} -\mE\sum_{p=1}^{l} \frac{(\x^{(i)})^T\x^{(p)}}{\|\x^{(i)}\|_2\|\x^{(p)}\|_2}\frac{\beta_p A^{(i)}_M  A^{(p)}_M \sqrt{1-t}}{Z_M}.
\end{equation}

%%%%%%%%%%%%%%%%%%%%%%%%%%%%%%%%%%%%%%%%%%%%%%%%%%%%%%%%%%%%%%%%%%%%%%%%
\textbf{\underline{\emph{5) Computing $\mE_{\u^{(i,1)},\u^{(2)},\u^{(i,3)},u^{(4)}}  \frac{ A^{(i)}_M\u^{(4)}}{Z_M}$}}}
%%%%%%%%%%%%%%%%%%%%%%%%%%%%%%%%%%%%%%%%%%%%%%%%%%%%%%%%%%%%%%%%%%%%%%%%

\noindent Utilizing integration by parts we obtain
\begin{equation}\label{eq:MgenEanal12}
\mE_{\u^{(i,1)},\u^{(2)},\u^{(i,3)},u^{(4)}}  \frac{ A^{(i)}_M\u^{(4)}}{Z_M} =
\mE\lp\mE (\u^{(4)}\u^{(4)})\frac{d}{d\u^{(4)}}\lp \frac{ A^{(i)}_M}{Z_M}\rp\rp
=\mE\lp\frac{d}{d\u^{(4)}}\lp \frac{ A^{(i)}_M}{Z_M}\rp\rp.
\end{equation}
We also have
\begin{eqnarray}\label{eq:MgenEanal18}
\frac{d}{d\u^{(4)}}\lp \frac{ A^{(i)}_M}{Z_M}\rp  & = &
\frac{1}{Z_M}\frac{d A^{(i)}_M}{d\u^{(4)}}-\frac{A^{(i)}_M}{Z^{2}_M}\frac{dZ_M}{d\u^{(4)}} \nonumber \\
& = &   \frac{\beta_i A^{(i)}_M\sqrt{t}}{Z_M}
-\frac{ A^{(i)}_M}{Z^{2}_M}\sum_{p=1}^{l}\frac{d A^{(i)}_M}{d\u^{(4)}}  = \frac{\beta_i A^{(i)}_M\sqrt{t}}{Z_M} -\sum_{p=1}^{l}\frac{ A^{(i)}_MA^{(p)}_M\beta_p\sqrt{t}}{Z^{2}_M}.
\end{eqnarray}
Combining (\ref{eq:MgenEanal12}) and (\ref{eq:MgenEanal18}) we have
\begin{equation}\label{eq:MgenEanal19}
\mE_{\u^{(i,1)},\u^{(2)},\u^{(i,3)},u^{(4)}}  \frac{ A^{(i)}_M\u^{(4)}}{Z_M}  =
\mE \frac{\beta_i A^{(i)}_M\sqrt{t}}{Z_M} -\mE\sum_{p=1}^{l}\frac{ A^{(i)}_MA^{(p)}_M\beta_p\sqrt{t}}{Z^{2}_M}.
\end{equation}

%%%%%%%%%%%%%%%%%%%%%%%%%%%%%%%%%%%%%%%%%%%%%%%%%%%%%%%%%%%%%%%%%
\subsubsection{Connecting everything together}
\label{sec:Mconalt}
%%%%%%%%%%%%%%%%%%%%%%%%%%%%%%%%%%%%%%%%%%%%%%%%%%%%%%%%%%%%%%%%%

Now, we can combine (\ref{eq:Mgenanal11}), (\ref{eq:Mgenanal19}), (\ref{eq:MgenCanal5}), (\ref{eq:Mgen1anal5}), (\ref{eq:Mgen2anal19}), (\ref{eq:MgenDanal19}), and (\ref{eq:MgenEanal19}) and follow the procedure from Section \ref{sec:conalt} to obtain similarly to (\ref{eq:conalt4})
\begin{eqnarray}\label{eq:Mconalt4}
\frac{d\psi_M(\calX,\beta,s,c_3,t)}{dt}
 & = & \frac{s}{2\sqrt{n}} \sum_{i=1}^{l}\mE\sum_{p=1}^{l}\sum_{j=1}^{m} (\|\x^{(i)}\|_2\|\x^{(p)}\|_2-(\x^{(i)})^T\x^{(p)})\nonumber \\
  & & \times \frac{ \beta s A^{(i)}_MA^{(p)}_M(\u_j^{(i,1)}\sqrt{t}+\u_j^{(2)}\sqrt{1-t})(\u_j^{(p,1)}\sqrt{t}+\sqrt{1-t}\u_j^{(2)})}{Z^{2}_MB^{(i)}B^{(p)}} \nonumber \\
& & +\frac{1}{2\sqrt{n}}   \mE\sum_{i=1}^{l}\lp \mE \frac{\beta_i A^{(i)}_M}{Z_M} -\mE\sum_{p=1}^{l}\frac{ A^{(i)}_MA^{(p)}_M\beta_p}{Z^{2}_M}\rp \nonumber \\
& & - \frac{1}{2\sqrt{n}}   \mE\sum_{i=1}^{l}\lp\mE\frac{\beta_i A^{(i)}_M}{Z_M} -\mE\sum_{p=1}^{l} \frac{(\x^{(i)})^T\x^{(p)}}{\|\x^{(i)}\|_2\|\x^{(p)}\|_2}\frac{\beta_p A^{(i)}_M  A^{(p)}_M }{Z^{2}_M}\rp.
\end{eqnarray}
The sum to the right of the equality sign in the first row is obtained in the same way as the corresponding one in (\ref{eq:conalt4}) with the above mentioned small adjustments. The second row follows from (\ref{eq:MgenEanal19}) and the third row follows from (\ref{eq:MgenDanal19}). From (\ref{eq:Mconalt4}) we further have
\begin{eqnarray}\label{eq:Mconalt5}
\frac{d\psi_M(\calX,\beta,s,t)}{dt}
 & = & \frac{1}{2\sqrt{n}} \sum_{i=1}^{l}\mE\sum_{p=1}^{l}\sum_{j=1}^{m} (\|\x^{(i)}\|_2\|\x^{(p)}\|_2-(\x^{(i)})^T\x^{(p)})\nonumber \\
  & & \times \frac{ \beta  A^{(i)}_MA^{(p)}_M(\u_j^{(i,1)}\sqrt{t}+\u_j^{(2)}\sqrt{1-t})(\u_j^{(p,1)}\sqrt{t}+\sqrt{1-t}\u_j^{(2)})}{Z^{2}_MB^{(i)}B^{(p)}} \nonumber \\
& & - \frac{1}{2} \mE \lp\sum_{i=1}^{l}\sum_{p=1}^{l} (\|\x^{(i)}\|_2\|\x^{(p)}\|_2-(\x^{(i)})^T\x^{(p)})\frac{\beta A^{(i)}_M  A^{(p)}_M }{Z^{2}_M}\rp.
\end{eqnarray}
(\ref{eq:Mconalt5}) then easily gives
\begin{eqnarray}\label{eq:Mconalt6}
\frac{d\psi_M(\calX,\beta,s,t)}{dt} & = &  -\frac{\beta }{2\sqrt{n}} \sum_{i=1}^{l}\mE\sum_{p=1}^{l}\frac{ A^{(i)}_MA^{(p)}_M}{Z^{(2)}_M}
 (\|\x^{(i)}\|_2\|\x^{(p)}\|_2-(\x^{(i)})^T\x^{(p)})
\nonumber \\
 & & \times \lp 1-
\sum_{j=1}^{m}\frac{(\u_j^{(i,1)}\sqrt{t}+\u_j^{(2)}\sqrt{1-t})(\u_j^{(p,1)}\sqrt{t}+\sqrt{1-t}\u_j^{(2)})}{B^{(i)}B^{(p)}}\rp.\nonumber\\
\end{eqnarray}
Finally we have
\begin{eqnarray}\label{eq:Mconalt7}
\frac{d\psi_M(\calX,\beta,s,t)}{dt} & = &  -\frac{\beta}{2\sqrt{n}} \sum_{i=1}^{l}\mE\sum_{p=1}^{l} \frac{ A^{(i)}_MA^{(p)}_M}{Z^{(2)}_M}
 (\|\x^{(i)}\|_2\|\x^{(p)}\|_2-(\x^{(i)})^T\x^{(p)})\nonumber \\
& & \times \lp1-
\frac{(\u^{(i,1)}\sqrt{t}+\u^{(2)}\sqrt{1-t})^T(\u^{(p,1)}\sqrt{t}+\sqrt{1-t}\u^{(2)})}{B^{(i)}B^{(p)}}\rp.\nonumber\\
\end{eqnarray}
We summarize the above results in the following theorem.
\begin{theorem}
\label{thm:thm1a}
  Assume the setup of Theorem \ref{thm:thm1} without the restriction that the elements of $\calX$ have the unit norm. Function $\psi_M(\calX,\beta,s,t)$
\begin{eqnarray}\label{eq:thm1aeq1}
\psi_M(\calX,\beta,s,t)= \mE_{G,u^{(4)},\u^{(2)},\h} \frac{1}{\beta\sqrt{n}} \log\lp \sum_{i=1}^{l}e^{\beta\lp s\|\sqrt{t}
 G\x^{(i)}+\sqrt{1-t}\u^{(2)}\|_2+\sqrt{t}\|\x^{(i)}\|_2u^{(4)}+\sqrt{1-t}\h^T\x^{(i)}\rp} \rp,
\end{eqnarray}
is decreasing in $t$.
\end{theorem}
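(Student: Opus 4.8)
The plan is to establish Theorem \ref{thm:thm1a} by the same interpolation-and-Gaussian-integration-by-parts route that gave Theorem \ref{thm:thm1}. Indeed, the computation leading from the substitution (\ref{eq:Mgenanal6})--(\ref{eq:Mgenanal8}), through the term-by-term Gaussian integrations by parts (\ref{eq:Mgenanal19}), (\ref{eq:MgenCanal5}), (\ref{eq:Mgen1anal5}), (\ref{eq:Mgen2anal19}), (\ref{eq:MgenDanal19}), (\ref{eq:MgenEanal19}), all the way to the closed form (\ref{eq:Mconalt7}), has already been carried out in the preceding subsections, so the remaining task is simply to read off the sign of $\frac{d\psi_M(\calX,\beta,s,t)}{dt}$ from (\ref{eq:Mconalt7}) and conclude monotonicity, exactly as was done in Section \ref{sec:conalt} for Theorem \ref{thm:thm1}. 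For completeness I would first observe that the derivation above is a transcription of Sections \ref{sec:gencon}--\ref{sec:conalt} under three changes: every weight $\beta$ attached to index $i$ in an exponent becomes $\beta_i=\beta\|\x^{(i)}\|_2$, so each derivative of $A^{(i)}_M$ carries $\beta_i$; the Gaussian covariances used in the integrations by parts are $\mE(\u_j^{(i,1)}\u_j^{(p,1)})=\mE(\u^{(i,3)}\u^{(p,3)})=\frac{(\x^{(i)})^T\x^{(p)}}{\|\x^{(i)}\|_2\|\x^{(p)}\|_2}$, with the $\u^{(2)}$ and $u^{(4)}$ coordinates still of unit variance; and there is the extra $u^{(4)}$ term, treated in (\ref{eq:MgenEanal12})--(\ref{eq:MgenEanal19}).

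The one genuinely new step in assembling the pieces is that the $u^{(4)}$ term, after integration by parts, contributes $\frac{\|\x^{(i)}\|_2}{\sqrt{t}}\lp\mE\frac{\beta_iA^{(i)}_M}{Z_M}-\mE\sum_p\frac{\beta_pA^{(i)}_MA^{(p)}_M}{Z_M^2}\rp$ to $\frac{d\psi_M}{dt}$, and its $\mE\frac{\beta_iA^{(i)}_M}{Z_M}$-part cancels the corresponding part of the $\u^{(i,3)}$ contribution (\ref{eq:MgenDanal19}) against which it is paired in (\ref{eq:Mgenanal11}); what survives combines, via $\beta_i\|\x^{(p)}\|_2=\beta_p\|\x^{(i)}\|_2=\beta\|\x^{(i)}\|_2\|\x^{(p)}\|_2$, into the symmetric coefficient $\|\x^{(i)}\|_2\|\x^{(p)}\|_2-(\x^{(i)})^T\x^{(p)}$. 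This is exactly the non-spherical promotion of the ``$1-(\x^{(i)})^T\x^{(p)}$'' of (\ref{eq:conalt2}), the ``$1$'' now being furnished by the $u^{(4)}$ term rather than being free. All cross-terms pairing a $\beta_i$-weighted derivative against a $\beta_p$-weighted one cancel as in Section \ref{sec:conalt}, which yields (\ref{eq:Mconalt7}).

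The final step is the sign argument. In (\ref{eq:Mconalt7}) every summand is $-\frac{\beta}{2\sqrt{n}}$ times a product of three nonnegative factors: $\frac{A^{(i)}_MA^{(p)}_M}{Z_M^2}>0$, since the $A$'s and $Z_M$ are sums of exponentials; $\|\x^{(i)}\|_2\|\x^{(p)}\|_2-(\x^{(i)})^T\x^{(p)}\geq0$, by Cauchy--Schwarz applied to $\x^{(i)},\x^{(p)}$; and, writing $\v^{(i)}:=\sqrt{t}\u^{(i,1)}+\sqrt{1-t}\u^{(2)}$ so that $\|\v^{(i)}\|_2=B^{(i)}$ by the definition in (\ref{eq:Mgenanal7}), the bracket $1-\frac{(\v^{(i)})^T\v^{(p)}}{B^{(i)}B^{(p)}}=1-\frac{(\v^{(i)})^T\v^{(p)}}{\|\v^{(i)}\|_2\|\v^{(p)}\|_2}\geq0$, by Cauchy--Schwarz applied to $\v^{(i)},\v^{(p)}$. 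Hence every term of the double sum is $\leq0$, so $\frac{d\psi_M(\calX,\beta,s,t)}{dt}\leq0$ and $\psi_M$ is decreasing in $t$; as in the Corollary following Theorem \ref{thm:thm1}, this also gives the comparison $\psi_M(\calX,\beta,s,0)\geq\psi_M(\calX,\beta,s,1)$.

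I expect the main obstacle to be precisely the bookkeeping of the middle paragraph: carrying the two different normal weights $\beta_i$ and $\beta_p$ through all five integration-by-parts computations and verifying that exactly the same cancellations as in Section \ref{sec:conalt} persist, and in particular confirming that the $u^{(4)}$ term and the $\u^{(i,3)}$ term together produce the coefficient $\|\x^{(i)}\|_2\|\x^{(p)}\|_2-(\x^{(i)})^T\x^{(p)}$ and nothing else. Once (\ref{eq:Mconalt7}) is established there is essentially nothing left to do: the sign argument is the same pair of Cauchy--Schwarz inequalities already used at the end of Section \ref{sec:conalt}, now applied to the $\x$'s as well as to the $\v$'s.
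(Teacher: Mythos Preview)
Your proposal is correct and follows essentially the same route as the paper: the paper's proof of Theorem \ref{thm:thm1a} is literally ``Follows from the above discussion,'' where the discussion is precisely the derivation (\ref{eq:Mgenanal6})--(\ref{eq:Mconalt7}) that you summarize, and the sign is then read off from (\ref{eq:Mconalt7}) exactly as you do via two applications of Cauchy--Schwarz. If anything, you are more explicit than the paper about why the $u^{(4)}$ contribution (\ref{eq:MgenEanal19}) combines with the $\u^{(i,3)}$ contribution (\ref{eq:MgenDanal19}) to manufacture the factor $\|\x^{(i)}\|_2\|\x^{(p)}\|_2-(\x^{(i)})^T\x^{(p)}$ (the paper just writes down (\ref{eq:Mconalt4})--(\ref{eq:Mconalt5}) and lets the reader check); one very minor slip is the stray $1/\sqrt{t}$ in your displayed $u^{(4)}$ contribution, since the $\sqrt{t}$ produced in (\ref{eq:MgenEanal19}) cancels the $1/\sqrt{t}$ in (\ref{eq:Mgenanal11}), but this does not affect the argument.
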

\begin{proof}
  Follows from the above discussion.
\end{proof}
\begin{corollary}
  Assuming the setup of Theorem \ref{thm:thm1a}, we have
\begin{eqnarray}\label{eq:co1aeq1}
\psi_M(\calX,\beta,s,t)= \psi_M(\calX,\beta,s,0)+\int_{0}^{t}\frac{d\psi_M(\calX,\beta,s,t)}{dt}dt,
\end{eqnarray}
as well as the following comparison principle
\begin{eqnarray}\label{eq:co1aeq2}
\psi_M(\calX,\beta,s,0) \geq  \psi_M(\calX,\beta,s,t)\geq \psi_M(\calX,\beta,s,1).
\end{eqnarray}
\end{corollary}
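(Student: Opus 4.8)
The plan is to read both assertions directly off Theorem~\ref{thm:thm1a}, without revisiting the interpolation computation that underlies it. For the identity (\ref{eq:co1aeq1}), the first step is to observe that the discussion preceding Theorem~\ref{thm:thm1a} already exhibits $\frac{d\psi_M(\calX,\beta,s,t)}{dt}$ in the explicit closed form (\ref{eq:Mconalt7}); in particular $t\mapsto\psi_M(\calX,\beta,s,t)$ is continuously differentiable on the open interval $(0,1)$, with the derivative given by that bounded expression. The fundamental theorem of calculus then yields $\psi_M(\calX,\beta,s,t)-\psi_M(\calX,\beta,s,0)=\int_{0}^{t}\frac{d\psi_M(\calX,\beta,s,\tau)}{d\tau}\,d\tau$, which is precisely (\ref{eq:co1aeq1}).

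For the comparison chain (\ref{eq:co1aeq2}) I would simply invoke the monotonicity asserted by Theorem~\ref{thm:thm1a}: since $\psi_M(\calX,\beta,s,t)$ is decreasing in $t$ on $[0,1]$, the inequalities $\psi_M(\calX,\beta,s,0)\ge\psi_M(\calX,\beta,s,t)\ge\psi_M(\calX,\beta,s,1)$ hold for every $t\in[0,1]$. Equivalently, one can extract them from (\ref{eq:co1aeq1}) together with the sign of the integrand: in (\ref{eq:Mconalt7}) each summand carries the factor $\|\x^{(i)}\|_2\|\x^{(p)}\|_2-(\x^{(i)})^T\x^{(p)}\ge 0$ (Cauchy--Schwarz) multiplied by $1-\frac{(\u^{(i,1)}\sqrt{t}+\u^{(2)}\sqrt{1-t})^T(\u^{(p,1)}\sqrt{t}+\sqrt{1-t}\u^{(2)})}{B^{(i)}B^{(p)}}\ge 0$ (the two vectors in that inner product have norms $B^{(i)}$ and $B^{(p)}$), so $\frac{d\psi_M}{d\tau}\le 0$; hence $\int_{0}^{t}$ of it is $\le 0$, giving the first inequality, and $\int_{t}^{1}$ of it is $\le 0$, giving the second.

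The only point needing a word of care—and hence the main, quite mild, obstacle—is the behaviour of $\frac{d\psi_M}{dt}$ at the endpoints $t=0$ and $t=1$, where the representation (\ref{eq:Mgenanal11}) contains terms with $1/\sqrt{t}$ and $1/\sqrt{1-t}$. As in the spherical case (the passage from (\ref{eq:genanal11}) to (\ref{eq:conalt5})), these singular terms cancel after the Gaussian integration-by-parts bookkeeping, leaving the manifestly bounded expression (\ref{eq:Mconalt7}); thus $\psi_M$ is genuinely differentiable on $(0,1)$, continuous on $[0,1]$, and the integral in (\ref{eq:co1aeq1}) is a proper one. Everything else is immediate from Theorem~\ref{thm:thm1a}.
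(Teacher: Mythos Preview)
Your proposal is correct and mirrors the paper's own argument: the paper proves this corollary in one line by noting that $\frac{d\psi_M(\calX,\beta,s,t)}{dt}\leq 0$ (from Theorem~\ref{thm:thm1a}) and invoking the fundamental theorem of calculus. Your additional remarks on the Cauchy--Schwarz sign check and the endpoint regularity are sound elaborations but not departures from the paper's approach.
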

\begin{proof}
  Follows trivially from the above theorem by noting that $\frac{d\psi_M(\calX,\beta,s,t)}{dt}\leq 0$.
\end{proof}

%%%%%%%%%%%%%%%%%%%%%%%%%%%%%%%%%%%%%%%%%%%%%%%%%%%%%%%%%%%%%%%%%
\subsubsection{Simulations}
\label{sec:Mgenconsim}
%%%%%%%%%%%%%%%%%%%%%%%%%%%%%%%%%%%%%%%%%%%%%%%%%%%%%%%%%%%%%%%%%

In this section we present a few additional simulation results. We again chose $m=5$, $n=5$, $l=10$, and selected set $\calX$ as the columns of the following matrix
\begin{equation}
X^{-}=\begin{bmatrix}
-0.3624 & -0.9364 & 1.1566 & -0.8076 & -1.1066 & 1.3148 & -0.3405 & -0.7938 & -3.0744 & 0.2493 \\
-0.6616 & -1.4250 & -1.4638 & -0.1997 & 0.1102 & 0.9261 & 1.2240 & -0.1874 & -0.4569 & -0.1518 \\
-0.4980 & -0.0708 & -0.7947 & -1.3493 & 0.3226 & -0.4982 & 1.0334 & -0.2817 & 0.3247 & -2.4773 \\
-1.4281 & -0.7722 & 0.9885 & -0.4056 & -0.2903 & -0.1814 & 1.4318 & 1.0533 & 1.3286 & -0.6086 \\
-0.7196 & 1.3075 & 1.0363 & -0.9904 & 0.4357 & -1.6953 & 0.2346 & -0.5735 & -1.0376 & 0.1766
\end{bmatrix}.
\end{equation}
In other words, we have
\begin{equation}\label{eq:sim1}
  \calX^{-}=\{X^{-}_{:,1},X^{-}_{:,2},\dots,X^{-}_{:,l}\}.
\end{equation}
As earlier, set $\calX^{-}$ (and matrix $X^{-}$) is chosen totally randomly, i.e. there is nothing specific about it (this time even the norm of its elements is not equal to one). For such a set we repeated simulation experiments from earlier section. Namely, derivatives $\frac{d\psi_M(\calX,\beta,s,t)}{dt}$ were simulated according to (\ref{eq:Mgenanal11}) and according to (\ref{eq:Mconalt5}) and with such derivatives $\psi_M(\calX,\beta,s,t)$ is computed according to (\ref{eq:co1aeq1}). We also again simulated $\psi_M(\calX,\beta,s,t)$ directly (i.e. without interpolating computations) through (\ref{eq:Mgenanal8}). Throughout all simulations, the number of repetitions is again kept at $5e4$. Moreover, we set $\beta=3$ and simulated two different scenarios with all other parameters being the same: 1) $s=1$ and 2) $s=-1$.

\textbf{\underline{\emph{1) $s=1$ -- numerical results}}}

The results that we obtained for $s=1$ are presented in Figure \ref{fig:Mgensplus1xdiffnormpsi} and Table \ref{tab:Mgensplus1xdiffnormpsi} in a fashion that fully parallels what we have done in earlier sections. Both, Figure \ref{fig:Mgensplus1xdiffnormpsi} and Table \ref{tab:Mgensplus1xdiffnormpsi}, show an overwhelming agreement between the presented results.

\begin{figure}[htb]
%\begin{minipage}[b]{.5\linewidth}
\centering
\centerline{\epsfig{figure=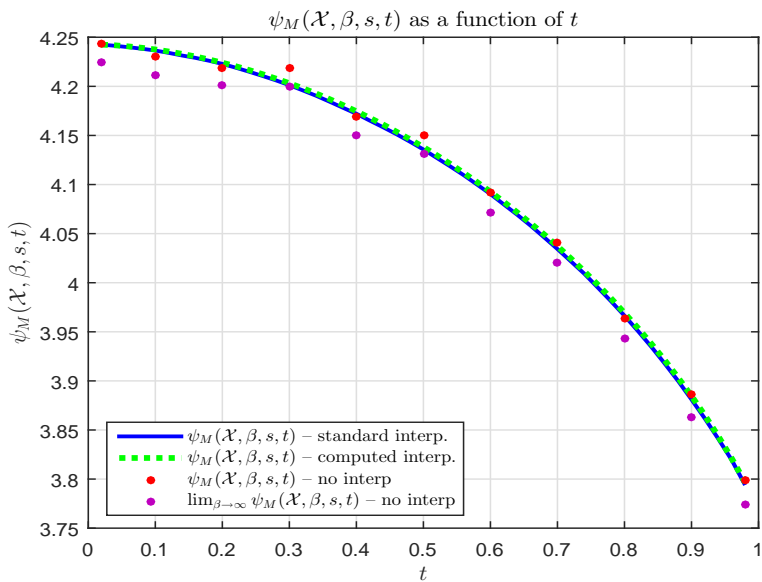,width=11.5cm,height=8cm}}
%\end{minipage}
%\begin{minipage}[b]{.5\linewidth}
%\centering
%\centerline{\epsfig{figure=finprerral08.eps,width=9cm,height=6.5cm}}
%\end{minipage}
\caption{$\psi(\calX,\beta,s,t)$ as a function of $t$; $m=5$, $n=5$, $l=10$, $\calX=\calX^{-}$, $\beta=3$, $s=1$}
\label{fig:Mgensplus1xdiffnormpsi}
\end{figure}
\begin{table}[h]
\caption{Simulated results --- $m=5$, $n=5$, $l=10$, $\calX=\calX^{-}$, $\beta=3$, $s=1$}\vspace{.1in}
\hspace{-0in}\centering
\begin{tabular}{||c||c|c|c|c|c||}\hline\hline
$ t$  &  $\frac{d\psi}{dt}$; (\ref{eq:genanal11}) & $\frac{d\psi}{dt}$;  (\ref{eq:conalt5}) & $\psi$;  (\ref{eq:genanal11}) and (\ref{eq:co1eq1}) & $\psi$; (\ref{eq:conalt5}) and (\ref{eq:co1eq1}) & $\psi$;  (\ref{eq:genanal8})\\  \hline\hline
$ 0.1000 $ & $ -0.0868 $ & $ -0.0877 $ & $\bl{\mathbf{ 4.2365 }}$ & $\bl{\mathbf{ 4.2376 }}$ & $\mathbf{ 4.2299 }$  \\ \hline
$ 0.2000 $ & $ -0.1643 $ & $ -0.1662 $ & $\bl{\mathbf{ 4.2229 }}$ & $\bl{\mathbf{ 4.2241 }}$ & $\mathbf{ 4.2189 }$  \\ \hline
$ 0.3000 $ & $ -0.2493 $ & $ -0.2342 $ & $\bl{\mathbf{ 4.2012 }}$ & $\bl{\mathbf{ 4.2032 }}$ & $\mathbf{ 4.2180 }$  \\ \hline
$ 0.4000 $ & $ -0.3099 $ & $ -0.3181 $ & $\bl{\mathbf{ 4.1721 }}$ & $\bl{\mathbf{ 4.1746 }}$ & $\mathbf{ 4.1691 }$  \\ \hline
$ 0.5000 $ & $ -0.3966 $ & $ -0.3936 $ & $\bl{\mathbf{ 4.1357 }}$ & $\bl{\mathbf{ 4.1382 }}$ & $\mathbf{ 4.1502 }$  \\ \hline
$ 0.6000 $ & $ -0.4833 $ & $ -0.4948 $ & $\bl{\mathbf{ 4.0903 }}$ & $\bl{\mathbf{ 4.0926 }}$ & $\mathbf{ 4.0912 }$  \\ \hline
$ 0.7000 $ & $ -0.6006 $ & $ -0.5995 $ & $\bl{\mathbf{ 4.0343 }}$ & $\bl{\mathbf{ 4.0372 }}$ & $\mathbf{ 4.0402 }$  \\ \hline
$ 0.8000 $ & $ -0.7336 $ & $ -0.7447 $ & $\bl{\mathbf{ 3.9664 }}$ & $\bl{\mathbf{ 3.9691 }}$ & $\mathbf{ 3.9636 }$  \\ \hline
$ 0.9000 $ & $ -0.9417 $ & $ -0.9298 $ & $\bl{\mathbf{ 3.8811 }}$ & $\bl{\mathbf{ 3.8846 }}$ & $\mathbf{ 3.8858 }$
  \\ \hline \hline
\end{tabular}
\label{tab:Mgensplus1xdiffnormpsi}
\end{table}

\textbf{\underline{\emph{2) $s=-1$ -- numerical results}}}

The results that we obtained for $s=-1$ are presented in Figure \ref{fig:Mgensmin1xdiffnormpsi} and Table \ref{tab:Mgensmin1xdiffnormpsi}. As above for $s=1$, here we again have that both, Figure \ref{fig:Mgensmin1xdiffnormpsi} and Table \ref{tab:Mgensmin1xdiffnormpsi}, show an overwhelming agreement between the presented results.

\begin{figure}[htb]
%\begin{minipage}[b]{.5\linewidth}
\centering
\centerline{\epsfig{figure=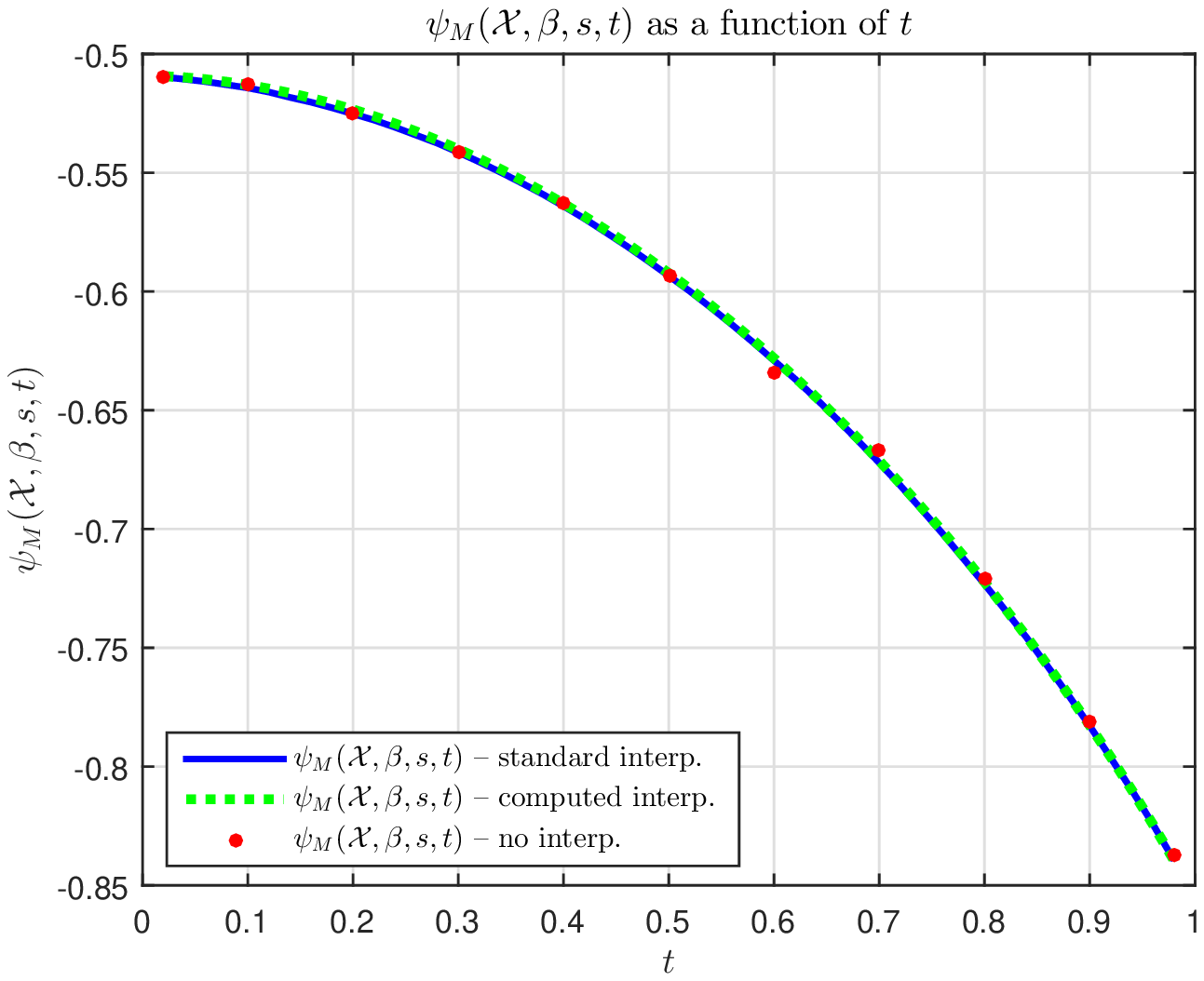,width=11.5cm,height=8cm}}
%\end{minipage}
%\begin{minipage}[b]{.5\linewidth}
%\centering
%\centerline{\epsfig{figure=finprerral08.eps,width=9cm,height=6.5cm}}
%\end{minipage}
\caption{$\psi(\calX,\beta,s,t)$ as a function of $t$; $m=5$, $n=5$, $l=10$, $\calX=\calX^{-}$, $\beta=$, $s=-1$}
\label{fig:Mgensmin1xdiffnormpsi}
\end{figure}
\begin{table}[h]
\caption{Simulated results --- $m=5$, $n=5$, $l=10$, $\calX=\calX^{-}$, $\beta=3$, $s=-1$}\vspace{.1in}
\hspace{-0in}\centering
\begin{tabular}{||c||c|c|c|c|c||}\hline\hline
$ t$  &  $\frac{d\psi}{dt}$; (\ref{eq:genanal11}) & $\frac{d\psi}{dt}$;  (\ref{eq:conalt5}) & $\psi$;  (\ref{eq:genanal11}) and (\ref{eq:co1eq1}) & $\psi$; (\ref{eq:conalt5}) and (\ref{eq:co1eq1}) & $\psi$;  (\ref{eq:genanal8})\\  \hline\hline
$ 0.1000 $ & $ -0.0672 $ & $ -0.0643 $ & $\bl{\mathbf{ -0.5143 }}$ & $\bl{\mathbf{ -0.5130 }}$ & $\mathbf{ -0.5128 }$\\ \hline
$ 0.2000 $ & $ -0.1284 $ & $ -0.1278 $ & $\bl{\mathbf{ -0.5252 }}$ & $\bl{\mathbf{ -0.5234 }}$ & $\mathbf{ -0.5245 }$\\ \hline
$ 0.3000 $ & $ -0.1919 $ & $ -0.1926 $ & $\bl{\mathbf{ -0.5414 }}$ & $\bl{\mathbf{ -0.5401 }}$ & $\mathbf{ -0.5413 }$\\ \hline
$ 0.4000 $ & $ -0.2546 $ & $ -0.2550 $ & $\bl{\mathbf{ -0.5641 }}$ & $\bl{\mathbf{ -0.5630 }}$ & $\mathbf{ -0.5630 }$\\ \hline
$ 0.5000 $ & $ -0.3169 $ & $ -0.3186 $ & $\bl{\mathbf{ -0.5934 }}$ & $\bl{\mathbf{ -0.5923 }}$ & $\mathbf{ -0.5929 }$\\ \hline
$ 0.6000 $ & $ -0.3850 $ & $ -0.3855 $ & $\bl{\mathbf{ -0.6291 }}$ & $\bl{\mathbf{ -0.6281 }}$ & $\mathbf{ -0.6342 }$\\ \hline
$ 0.7000 $ & $ -0.4623 $ & $ -0.4621 $ & $\bl{\mathbf{ -0.6721 }}$ & $\bl{\mathbf{ -0.6713 }}$ & $\mathbf{ -0.6671 }$\\ \hline
$ 0.8000 $ & $ -0.5355 $ & $ -0.5447 $ & $\bl{\mathbf{ -0.7232 }}$ & $\bl{\mathbf{ -0.7223 }}$ & $\mathbf{ -0.7214 }$\\ \hline
$ 0.9000 $ & $ -0.6412 $ & $ -0.6439 $ & $\bl{\mathbf{ -0.7829 }}$ & $\bl{\mathbf{ -0.7827 }}$ & $\mathbf{ -0.7809 }$
\\ \hline \hline
\end{tabular}
\label{tab:Mgensmin1xdiffnormpsi}
\end{table}

%%%%%%%%%%%%%%%%%%%%%%%%%%%%%%%%%%%%%%%%%%%%%%%%%%%%%%%%%%%%%%%%%
\subsubsection{$\beta\rightarrow \infty$}
\label{sec:betainf}
%%%%%%%%%%%%%%%%%%%%%%%%%%%%%%%%%%%%%%%%%%%%%%%%%%%%%%%%%%%%%%%%%

It is often of particular interest to study the following limiting behavior of $\xi_M(\calX,\beta,s)$
\begin{eqnarray}\label{eq:betainf1}
\lim_{\beta\rightarrow\infty} \xi_M(\calX,\beta,s)& = & \lim_{\beta\rightarrow\infty} \mE_{G,u^{(4)}} \frac{1}{\beta\sqrt{n}} \log\lp \sum_{i=1}^{l}e^{\beta\lp s\|
 G\x^{(i)}\|_2+\|\x^{(i)}\|_2u^{(4)}\rp} \rp\nonumber \\
 & = & \mE_{G,u^{(4)}} \frac{\max_{\x^{(i)}\in \calX} \lp s\|
 G\x^{(i)}\|_2 +\|\x^{(i)}\|_2u^{(4)}\rp}{\sqrt{n}},
\end{eqnarray}
We again distinguish two scenarios.

%%%%%%%%%%%%%%%%%%%%%%%%%%%%%%%%%%%%%%%%%%%%%%%%%%%%%%%%%%%%%%%%%%%%%%%%
\textbf{\underline{\emph{1) $s=1$ -- a Slepian's comparison principle}}}
%%%%%%%%%%%%%%%%%%%%%%%%%%%%%%%%%%%%%%%%%%%%%%%%%%%%%%%%%%%%%%%%%%%%%%%%

\noindent When $s=1$ we have
\begin{eqnarray}\label{eq:Mbetainfsplus1}
\lim_{\beta\rightarrow\infty} \xi_M(\calX,\beta,1)=\mE_{G,u^{(4)}} \frac{\max_{\x^{(i)}\in \calX} \lp \|
 G\x^{(i)}\|_2+\|\x^{(i)}\|_2u^{(4)}\rp}{\sqrt{n}}.
\end{eqnarray}
Since $\xi_M(\calX,\beta,1)=\psi_M(\calX,\beta,1,1)$, the above machinery then gives
\begin{eqnarray}\label{eq:Mbetainfsplus2}
\mE_{G,u^{(4)}} \frac{\max_{\x^{(i)}\in \calX} \lp \|
 G\x^{(i)}\|_2+\|\x^{(i)}\|_2u^{(4)}\rp}{\sqrt{n}} & = & \lim_{\beta\rightarrow\infty} \xi_M(\calX,\beta,1)=
 \lim_{\beta\rightarrow\infty} \psi_M(\calX,\beta,1,1) \nonumber \\
& \leq &  \lim_{\beta\rightarrow\infty} \psi_M(\calX,\beta,1,0)\nonumber \\
& = &
\mE_{\u^{(2)},\h} \frac{\max_{\x^{(i)}\in \calX} \lp \|\x^{(i)}\|_2\|\u^{(2)}\|_2 +\h^T\x^{(i)}\rp}{\sqrt{n}}.
\end{eqnarray}
(\ref{eq:Mbetainfsplus2}) is again of course a form of the well-known Slepian comparison principle \cite{Slep62}. Indeed, based on the Slepian comparison principle one has
\begin{eqnarray}\label{eq:Mbetainfsplus2}
\mE_{G,u^{(4)}} \frac{\max_{\x^{(i)}\in \calX} \lp \|
 G\x^{(i)}\|_2+\|\x^{(i)}\|_2u^{(4)}\rp}{\sqrt{n}} & = & \mE_{G,u^{(4)}} \frac{\max_{\x^{(i)}\in \calX,\y\in S^{m-1}} \lp \y^T G\x^{(i)}+\|\x^{(i)}\|_2u^{(4)}\rp}{\sqrt{n}}\nonumber \\
 & \leq & \mE_{\u^{(2)},\h} \frac{\max_{\x^{(i)}\in \calX,\y\in S^{m-1}} \lp \|\x^{(i)}\|_2\y^T\u^{(2)} +\h^T\x^{(i)}\rp}{\sqrt{n}}\nonumber \\
 & = &
\mE_{\u^{(2)},\h} \frac{\max_{\x^{(i)}\in \calX} \lp \|\x^{(i)}\|_2\|\u^{(2)}\|_2 +\h^T\x^{(i)}\rp}{\sqrt{n}}.\nonumber \\
\end{eqnarray}
Of course, this form is only a special case of a much stronger concept introduced in Theorem \ref{thm:thm1a}.

\textbf{\underline{\emph{Numerical results}}}

We below show in Figure \ref{fig:Mgenbetainfsplus1xdiffnormpsi} and Table \ref{tab:Mgenbetainfsplus1xdiffnormpsi} simulated results. We again emulate what was done in earlier section and set all parameters to be the same as before. Along the same lines we again set $\beta=10$, as a way to approach closer $\beta\rightarrow\infty$ regime.
As can be seen from both, Figure \ref{fig:Mgenbetainfsplus1xdiffnormpsi} and Table \ref{tab:Mgenbetainfsplus1xdiffnormpsi}, the agreement between all presented results is again overwhelming. Moreover, $\beta=10$ doesn't bring much of a difference when compared to $\beta=3$ which hints that there are scenarios where $\beta$ as small as three is already a fairly good approximation of $\beta\rightarrow\infty$.

%\begin{figure}[htb]
%%\begin{minipage}[b]{.5\linewidth}
%\centering
%\centerline{\epsfig{figure=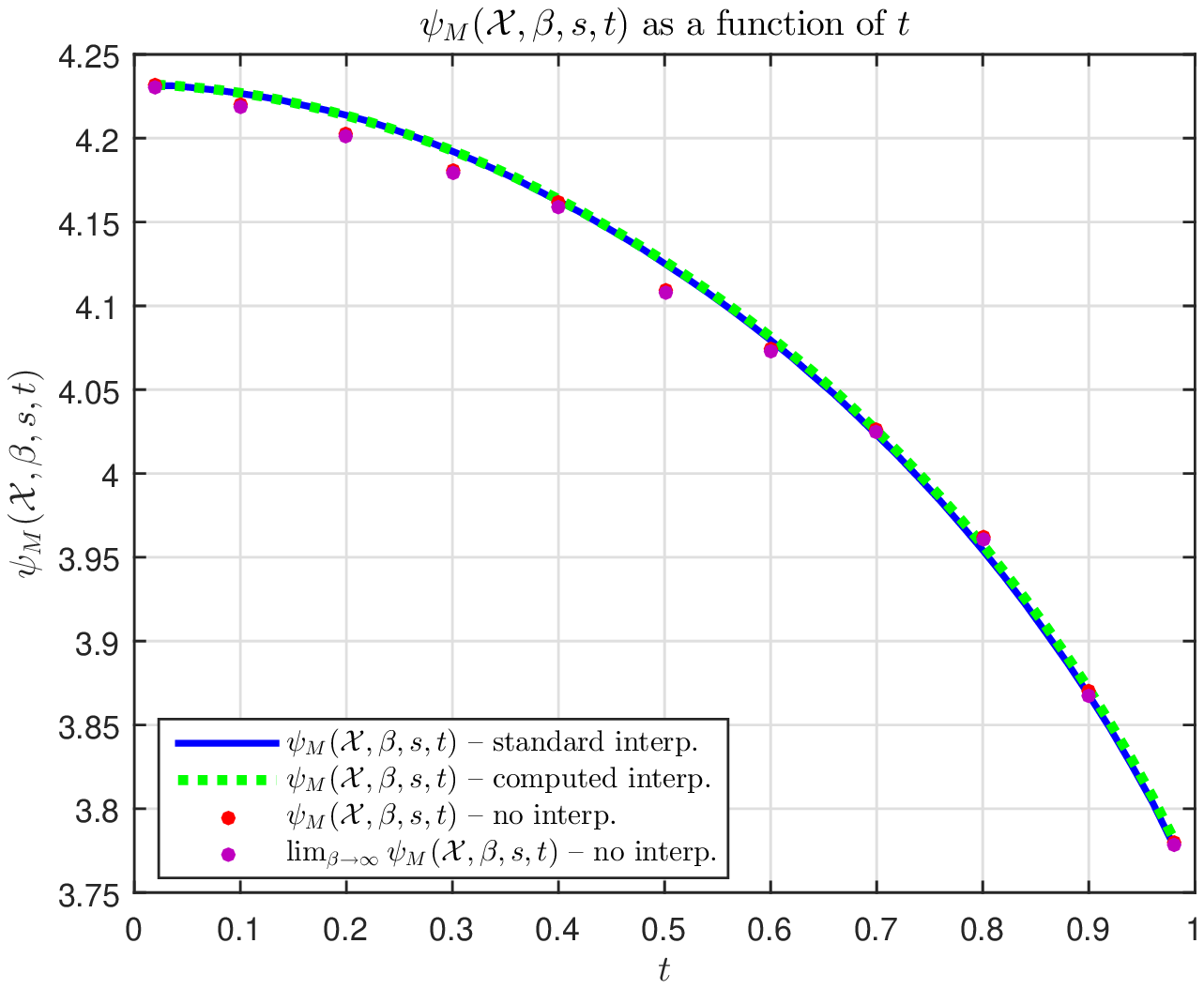,width=11.5cm,height=8cm}}
%%\end{minipage}
%%\begin{minipage}[b]{.5\linewidth}
%%\centering
%%\centerline{\epsfig{figure=finprerral08.eps,width=9cm,height=6.5cm}}
%%\end{minipage}
%\caption{$\psi(\calX,\beta,s,t)$ as a function of $t$; $m=5$, $n=5$, $l=10$, $\calX=\calX^{-}$, $\beta=10$, $s=1$}
%\label{fig:Mgensplus1xdiffnormpsi}
%\end{figure}

\begin{figure}[htb]
\begin{minipage}[b]{.5\linewidth}
\centering
\centerline{\epsfig{figure=splus1xdiffnormbeta10psiPAP.eps,width=9cm,height=7cm}}
%\end{minipage}
%\begin{minipage}[b]{.5\linewidth}
%\centering
%\centerline{\epsfig{figure=finprerral08.eps,width=9cm,height=6.5cm}}
\end{minipage}
\begin{minipage}[b]{.5\linewidth}
\centering
\centerline{\epsfig{figure=,width=9cm,height=7cm}}
%\end{minipage}
%\begin{minipage}[b]{.5\linewidth}
%\centering
%\centerline{\epsfig{figure=finprerral08.eps,width=9cm,height=6.5cm}}
\end{minipage}
\caption{Left -- $\psi_M(\calX,\beta,s,t)$ as a function of $t$; $m=5$, $n=5$, $l=10$, $\calX=\calX^{-}$, $\beta=10$, $s=1$; right -- comparison between $\beta=3$ and $\beta=10$}
\label{fig:Mgenbetainfsplus1xdiffnormpsi}
\end{figure}

\begin{table}[h]
\caption{Simulated results --- $m=5$, $n=5$, $l=10$, $\calX=\calX^{-}$, $\beta=10$, $s=1$}\vspace{.1in}
\hspace{-0in}\centering
\begin{tabular}{||c||c|c|c|c|c|c||}\hline\hline
$ t$  &  $\frac{d\psi}{dt}$; (\ref{eq:genanal11}) & $\frac{d\psi}{dt}$;  (\ref{eq:conalt5}) & $\psi$;  (\ref{eq:genanal11}) and (\ref{eq:co1eq1}) & $\psi$; (\ref{eq:conalt5}) and (\ref{eq:co1eq1}) & $\psi$;  (\ref{eq:genanal8})& $\lim_{\beta\rightarrow\infty}\psi$;  (\ref{eq:genanal8})\\  \hline\hline
$ 0.1000 $ & $ -0.0863 $ & $ -0.0895 $ & $\bl{\mathbf{ 4.2267 }}$ & $\bl{\mathbf{ 4.2270 }}$ & $\mathbf{ 4.2196 }$& $\prp{\mathbf{ 4.2180 }}$\\ \hline
$ 0.2000 $ & $ -0.1582 $ & $ -0.1670 $ & $\bl{\mathbf{ 4.2137 }}$ & $\bl{\mathbf{ 4.2135 }}$ & $\mathbf{ 4.2021 }$& $\prp{\mathbf{ 4.2005 }}$\\ \hline
$ 0.3000 $ & $ -0.2516 $ & $ -0.2393 $ & $\bl{\mathbf{ 4.1922 }}$ & $\bl{\mathbf{ 4.1926 }}$ & $\mathbf{ 4.1807 }$& $\prp{\mathbf{ 4.1791 }}$\\ \hline
$ 0.4000 $ & $ -0.3230 $ & $ -0.3247 $ & $\bl{\mathbf{ 4.1625 }}$ & $\bl{\mathbf{ 4.1636 }}$ & $\mathbf{ 4.1611 }$& $\prp{\mathbf{ 4.1594 }}$\\ \hline
$ 0.5000 $ & $ -0.4159 $ & $ -0.4012 $ & $\bl{\mathbf{ 4.1252 }}$ & $\bl{\mathbf{ 4.1266 }}$ & $\mathbf{ 4.1094 }$& $\prp{\mathbf{ 4.1078 }}$\\ \hline
$ 0.6000 $ & $ -0.4946 $ & $ -0.4907 $ & $\bl{\mathbf{ 4.0795 }}$ & $\bl{\mathbf{ 4.0814 }}$ & $\mathbf{ 4.0740 }$& $\prp{\mathbf{ 4.0723 }}$\\ \hline
$ 0.7000 $ & $ -0.6124 $ & $ -0.6059 $ & $\bl{\mathbf{ 4.0230 }}$ & $\bl{\mathbf{ 4.0255 }}$ & $\mathbf{ 4.0264 }$& $\prp{\mathbf{ 4.0247 }}$\\ \hline
$ 0.8000 $ & $ -0.7450 $ & $ -0.7344 $ & $\bl{\mathbf{ 3.9542 }}$ & $\bl{\mathbf{ 3.9574 }}$ & $\mathbf{ 3.9625 }$& $\prp{\mathbf{ 3.9608 }}$\\ \hline
$ 0.9000 $ & $ -0.9485 $ & $ -0.9422 $ & $\bl{\mathbf{ 3.8678 }}$ & $\bl{\mathbf{ 3.8715 }}$ & $\mathbf{ 3.8699 }$& $\prp{\mathbf{ 3.8680 }}$
\\ \hline\hline
\end{tabular}
\label{tab:Mgenbetainfsplus1xdiffnormpsi}
\end{table}

%\vspace{.3in}
%%%%%%%%%%%%%%%%%%%%%%%%%%%%%%%%%%%%%%%%%%%%%%%%%%%%%%%%%%%%%%%%%%%%%%%%
\textbf{\underline{\emph{2) $s=1$ -- a Gordon's comparison principle}}}
%%%%%%%%%%%%%%%%%%%%%%%%%%%%%%%%%%%%%%%%%%%%%%%%%%%%%%%%%%%%%%%%%%%%%%%%

\noindent When $s=-1$ we have

\begin{eqnarray}\label{eq:Mbetainfsminus1}
\lim_{\beta\rightarrow\infty} \xi_M(\calX,\beta,-1) & = & \mE_{G,u^{(4)}} \frac{\max_{\x^{(i)}\in \calX} \lp - \|
 G\x^{(i)}\|_2+\|\x^{(i)}\|_2u^{(4)}\rp}{\sqrt{n}}\nonumber \\
& = & -\mE_{G,u^{(4)}} \frac{\min_{\x^{(i)}\in \calX} \lp  \|
 G\x^{(i)}\|_2-\|\x^{(i)}\|_2u^{(4)}\rp}{\sqrt{n}}.
\end{eqnarray}
Relying again on $\xi_M(\calX,\beta,1)=\psi(\calX,\beta,1,1)$ and the above machinery we have
\begin{eqnarray}\label{eq:Mbetainfsminus2}
-\mE_{G,u^{(4)}} \frac{\min_{\x^{(i)}\in \calX} \lp  \|
 G\x^{(i)}\|_2-\|\x^{(i)}\|_2u^{(4)}\rp}{\sqrt{n}} & = & \lim_{\beta\rightarrow\infty} \xi_M(\calX,\beta,1)=
 \lim_{\beta\rightarrow\infty} \psi_M(\calX,\beta,1,1) \nonumber \\
& \leq &  \lim_{\beta\rightarrow\infty} \psi_M(\calX,\beta,1,0)\nonumber \\
& = &
\mE_{\u^{(2)},\h} \frac{\max_{\x^{(i)}\in \calX} \lp -\|\x^{(i)}\|_2\|\u^{(2)}\|_2 +\h^T\x^{(i)}\rp}{\sqrt{n}} \nonumber \\
& = &
- \mE_{\u^{(2)},\h} \frac{\min_{\x^{(i)}\in \calX} \lp \|\x^{(i)}\|_2\|\u^{(2)}\|_2 -\h^T\x^{(i)}\rp}{\sqrt{n}}.
\end{eqnarray}
(\ref{eq:betainfsminus2}) is of course again a form of the well-known Gordon comparison principle \cite{Gordon85}. Namely, according to the Gordon's principle one has the following
\begin{eqnarray}\label{eq:Mbetainfsminus3}
\mE_{G,u^{(4)}} \frac{\min_{\x^{(i)}\in \calX} \lp \|
 G\x^{(i)}\|_2+\|\x^{(i)}\|_2u^{(4)}\rp}{\sqrt{n}} & = & \mE_{G,u^{(4)}} \frac{\min_{\x^{(i)}\in \calX}\max_{\y\in S^{m-1}} \lp \y^T G\x^{(i)}+\|\x^{(i)}\|_2u^{(4)}\rp}{\sqrt{n}}\nonumber \\
 & \geq & \mE_{\u^{(2)},\h} \frac{\min_{\x^{(i)}\in \calX}\max_{\y\in S^{m-1}} \lp \|\x^{(i)}\|_2\y^T\u^{(2)} +\h^T\x^{(i)}\rp}{\sqrt{n}} \nonumber \\
 & = &
\mE_{\u^{(2)},\h} \frac{\min_{\x^{(i)}\in \calX} \lp \|\x^{(i)}\|_2\|\u^{(2)}\|_2 +\h^T\x^{(i)}\rp}{\sqrt{n}}.\nonumber \\
\end{eqnarray}
Clearly, connecting beginning and end in both, (\ref{eq:Mbetainfsminus2}) and (\ref{eq:Mbetainfsminus3}) we obtain the same inequalities (as earlier, $-\h$ and $\h$ have the same distribution, and so do $-u^{(4)}$ and $u^{(4)}$). SImilarly to what we had above,  (\ref{eq:Mbetainfsminus2}) and (\ref{eq:Mbetainfsminus3}) are again a special case of a much stronger concept introduced in Theorem \ref{thm:thm1a}.

%\begin{figure}[htb]
%%\begin{minipage}[b]{.5\linewidth}
%\centering
%\centerline{\epsfig{figure=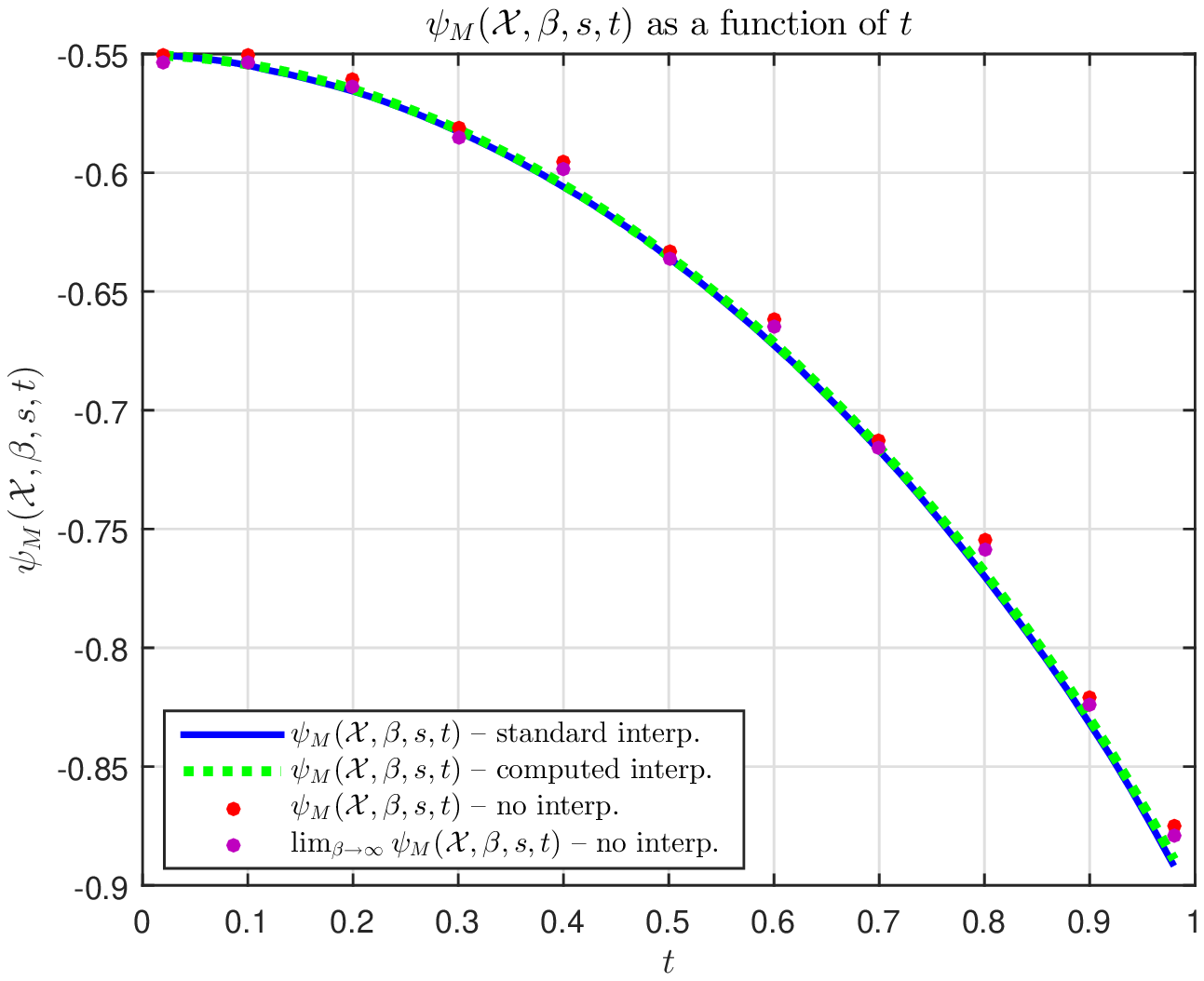,width=11.5cm,height=8cm}}
%%\end{minipage}
%%\begin{minipage}[b]{.5\linewidth}
%%\centering
%%\centerline{\epsfig{figure=finprerral08.eps,width=9cm,height=6.5cm}}
%%\end{minipage}
%\caption{$\psi(\calX,\beta,s,t)$ as a function of $t$; $m=5$, $n=5$, $l=10$, $\calX=\calX^{-}$, $\beta=10$, $s=-1$}
%\label{fig:Mgensmin1xdiffnormpsi}
%\end{figure}

\textbf{\underline{\emph{Numerical results}}}

In Figure \ref{fig:Mgenbetainfsmin1xdiffnormpsi} and Table \ref{tab:Mgenbetainfsmin1xdiffnormpsi} simulated results are shown. All parameters are again the same as earlier, and $\beta=10$ is selected to emulate $\beta\rightarrow\infty$.
Both, Figure \ref{fig:Mgenbetainfsmin1xdiffnormpsi} and Table \ref{tab:Mgenbetainfsmin1xdiffnormpsi}, again demonstrate a solid agreement between all the presented results with $\beta=10$ being a pretty good approximation of $\beta\rightarrow\infty$.

%\begin{figure}[htb]
%%\begin{minipage}[b]{.5\linewidth}
%\centering
%\centerline{\epsfig{figure=splus1xdiffnormbeta10psiPAP.eps,width=11.5cm,height=8cm}}
%%\end{minipage}
%%\begin{minipage}[b]{.5\linewidth}
%%\centering
%%\centerline{\epsfig{figure=finprerral08.eps,width=9cm,height=6.5cm}}
%%\end{minipage}
%\caption{$\psi(\calX,\beta,s,t)$ as a function of $t$; $m=5$, $n=5$, $l=10$, $\calX=\calX^{-}$, $\beta=10$, $s=1$}
%\label{fig:Mgensplus1xdiffnormpsi}
%\end{figure}

\begin{figure}[htb]
\begin{minipage}[b]{.5\linewidth}
\centering
\centerline{\epsfig{figure=smin1xdiffnormbeta10psiPAP.eps,width=9cm,height=7cm}}
%\end{minipage}
%\begin{minipage}[b]{.5\linewidth}
%\centering
%\centerline{\epsfig{figure=finprerral08.eps,width=9cm,height=6.5cm}}
\end{minipage}
\begin{minipage}[b]{.5\linewidth}
\centering
\centerline{\epsfig{figure=,width=9cm,height=7cm}}
%\end{minipage}
%\begin{minipage}[b]{.5\linewidth}
%\centering
%\centerline{\epsfig{figure=finprerral08.eps,width=9cm,height=6.5cm}}
\end{minipage}
\caption{Left -- $\psi_M(\calX,\beta,s,t)$ as a function of $t$; $m=5$, $n=5$, $l=10$, $\calX=\calX^{-}$, $\beta=10$, $s=-1$; right -- comparison between $\beta=3$ and $\beta=10$}
\label{fig:Mgenbetainfsmin1xdiffnormpsi}
\end{figure}

\begin{table}[h]
\caption{Simulated results --- $m=5$, $n=5$, $l=10$, $\calX=\calX^{-}$, $\beta=10$, $s=-1$}\vspace{.1in}
\hspace{-0in}\centering
\begin{tabular}{||c||c|c|c|c|c|c||}\hline\hline
$ t$  &  $\frac{d\psi}{dt}$; (\ref{eq:genanal11}) & $\frac{d\psi}{dt}$;  (\ref{eq:conalt5}) & $\psi$;  (\ref{eq:genanal11}) and (\ref{eq:co1eq1}) & $\psi$; (\ref{eq:conalt5}) and (\ref{eq:co1eq1}) & $\psi$;  (\ref{eq:genanal8})& $\lim_{\beta\rightarrow\infty}\psi$;  (\ref{eq:genanal8})\\  \hline\hline
$ 0.1000 $ & $ -0.0785 $ & $ -0.0670 $ & $\bl{\mathbf{ -0.5548 }}$ & $\bl{\mathbf{ -0.5544 }}$ & $\mathbf{ -0.5504 }$& $\prp{\mathbf{ -0.5537 }}$\\ \hline
$ 0.2000 $ & $ -0.1263 $ & $ -0.1309 $ & $\bl{\mathbf{ -0.5657 }}$ & $\bl{\mathbf{ -0.5649 }}$ & $\mathbf{ -0.5604 }$& $\prp{\mathbf{ -0.5638 }}$\\ \hline
$ 0.3000 $ & $ -0.1932 $ & $ -0.1929 $ & $\bl{\mathbf{ -0.5827 }}$ & $\bl{\mathbf{ -0.5819 }}$ & $\mathbf{ -0.5815 }$& $\prp{\mathbf{ -0.5849 }}$\\ \hline
$ 0.4000 $ & $ -0.2583 $ & $ -0.2599 $ & $\bl{\mathbf{ -0.6060 }}$ & $\bl{\mathbf{ -0.6052 }}$ & $\mathbf{ -0.5950 }$& $\prp{\mathbf{ -0.5984 }}$\\ \hline
$ 0.5000 $ & $ -0.3379 $ & $ -0.3304 $ & $\bl{\mathbf{ -0.6358 }}$ & $\bl{\mathbf{ -0.6351 }}$ & $\mathbf{ -0.6330 }$& $\prp{\mathbf{ -0.6364 }}$\\ \hline
$ 0.6000 $ & $ -0.3975 $ & $ -0.3955 $ & $\bl{\mathbf{ -0.6727 }}$ & $\bl{\mathbf{ -0.6716 }}$ & $\mathbf{ -0.6615 }$& $\prp{\mathbf{ -0.6650 }}$\\ \hline
$ 0.7000 $ & $ -0.4738 $ & $ -0.4685 $ & $\bl{\mathbf{ -0.7170 }}$ & $\bl{\mathbf{ -0.7158 }}$ & $\mathbf{ -0.7123 }$& $\prp{\mathbf{ -0.7159 }}$\\ \hline
$ 0.8000 $ & $ -0.5622 $ & $ -0.5668 $ & $\bl{\mathbf{ -0.7699 }}$ & $\bl{\mathbf{ -0.7682 }}$ & $\mathbf{ -0.7549 }$& $\prp{\mathbf{ -0.7587 }}$\\ \hline
$ 0.9000 $ & $ -0.6710 $ & $ -0.6696 $ & $\bl{\mathbf{ -0.8320 }}$ & $\bl{\mathbf{ -0.8300 }}$ & $\mathbf{ -0.8205 }$& $\prp{\mathbf{ -0.8244 }}$
\\ \hline\hline
\end{tabular}
\label{tab:Mgenbetainfsmin1xdiffnormpsi}
\end{table}

%%%%%%%%%%%%%%%%%%%%%%%%%%%%%%%%%%%%%%%%%%%%%%%%%%%%%%%%%%%%%%%%%
\section{Lifting}
\label{sec:lifting}
%%%%%%%%%%%%%%%%%%%%%%%%%%%%%%%%%%%%%%%%%%%%%%%%%%%%%%%%%%%%%%%%%

In this section we introduce a powerful lifting principle that in some scenarios substantially improves on the above introduced comparison concepts. We will be interested in the following function
\begin{eqnarray}\label{eq:liftanal1}
 f_*(G,u^{(4)},\calX,\beta,s,c_3)= \lp \sum_{i=1}^{l}e^{\beta\lp s\|
 G\x^{(i)}\|_2\rp+\|\x^{(i)}\|_2u^{(4)}} \rp^{c_3},
\end{eqnarray}
where $G,u^{(4)},\calX,\beta$, and $s$ are as earlier, and $c_3\geq 0$ is a real number (a Gaussian framework is also assumed, i.e. we will continue to assume that $G\in \mR^{m\times n}$ is an $(m\times n)$ dimensional matrix with i.i.d. standard normal components; also we assume a general set $\calX$). In such a context, the expected value of the above function is again the most relevant. We will denote this expected value by $\xi_*(\calX,\beta,s,c_3)$ and set
\begin{eqnarray}\label{eq:liftanal2}
\xi_*(\calX,\beta,s,c_3)\triangleq \mE_{G,u^{(4)}} f_*(G,u^{(4)},\calX,\beta,s,c_3)= \mE_{G,u^{(4)}} \lp \sum_{i=1}^{l}e^{\beta\lp s\|
 G\x^{(i)}\|_2+\|\x^{(i)}\|_2u^{(4)}\rp} \rp^{c_3}.
\end{eqnarray}
Following into the footsteps of what we presented in Section \ref{sec:gencon}, we will employ the following interpolating function $\psi_*(\cdot)$ to study $\xi_*(\calX,\beta,s,c_3)$
\begin{eqnarray}\label{eq:liftanal3}
\psi_*(\calX,\beta,s,c_3,t)= \mE_{G,\u^{(4)},\u^{(2)},\h} \lp \sum_{i=1}^{l}e^{\beta\lp s\|\sqrt{t}
 G\x^{(i)}+\sqrt{1-t}\|\x^{(i)}\|_2\u^{(2)}\|_2+\sqrt{t}\u^{(4)}+\sqrt{1-t}\h^T\x^{(i)}\rp} \rp^{c_3},
\end{eqnarray}
where $\u^{(2)}$ is as earlier. Clearly, $\xi_*(\calX,\beta,s,c_3)=\psi_*(\calX,\beta,s,c_3,1)$. Similarly to what we had in Section \ref{sec:gencon}, $\psi_*(\calX,\beta,s,c_3,0)$ is an object much easier to handle than $\psi(\calX,\beta,s,c_3,1)$. Following further what was done in Section \ref{sec:gencon}, below we will try to connect $\psi_*(\calX,\beta,s,c_3,1)$ to $\psi_*(\calX,\beta,s,c_3,0)$ which will then automatically connect $\xi_*(\calX,\beta,s,c_3)$ to $\psi_*(\calX,\beta,s,c_3,0)$. Recalling on (\ref{eq:genanal4}) we have analogously to (\ref{eq:genanal6})
\begin{eqnarray}\label{eq:lifttanal6}
\psi_*(\calX,\beta,s,c_3,t)
 =  \mE_{\u^{(i,1)},\u^{(2)},\u^{(i,3)},u^{(4)}}\lp \sum_{i=1}^{l}e^{\beta\lp s\sqrt{\sum_{j=1}^{m}\lp\sqrt{t}\u_j^{(i,1)}+\sqrt{1-t}\u_j^{(2)}\rp^2}+\sqrt{t}u^{(4)}+\sqrt{1-t}\u^{(i,3)}\rp}\rp^{c_3},
\end{eqnarray}
where $\u_j^{(i,1)}$ and $\u_j^{(i,3)}$ are as earlier (in a sense that they are the inner products of vectors of i.i.d. standard normals and unit norm vectors $\frac{\x^{(i)}}{\|\x^{(i)}\|_2}$) and $\beta_i=\beta\|\x^{(i)}\|_2$).
Recalling on $B^{(i)}$, $A^{(i)}_{M}$, and $Z_M$ from (\ref{eq:genanal7}) and introducing $A^{(i)}_{*}$ and $Z_{*}$ to be more aligned with the notation of this section
\begin{eqnarray}\label{eq:liftanal7}
B^{(i)} & \triangleq &  \sqrt{\sum_{j=1}^{m}\lp\sqrt{t}\u_j^{(i,1)}+\sqrt{1-t}\u_j^{(2)}\rp^2} \nonumber \\
A^{(i)}_{*} & \triangleq & A^{(i)}_{M} =  e^{\beta_i(sB^{(i)}+\sqrt{t}u^{(4)}+\sqrt{1-t}\u^{(i,3)})}\nonumber \\
Z_{*} & \triangleq & Z_M =\sum_{i=1}^{l}e^{\beta_i\lp s\sqrt{\sum_{j=1}^{m}\lp\sqrt{t}\u_j^{(i,1)}+\sqrt{1-t}\u_j^{(2)}\rp^2}+\sqrt{t}u^{(4)}+\sqrt{1-t}\u^{(i,3)}\rp}= \sum_{i=1}^{l} A^{(i)}_{*},
\end{eqnarray}
we can rewrite (\ref{eq:liftanal7}) in the following way
\begin{eqnarray}\label{eq:liftanal8}
\psi_*(\calX,\beta,s,c_3,t) & = &  \mE_{\u^{(i,1)},\u^{(2)},\u^{(i,3)},u^{(4)}} (Z_*)^{c_3}.
\end{eqnarray}
Similarly to what we was done in Section \ref{sec:gencon}, we will study behavior of $\psi_*(\calX,\beta,s,t)$ when viewed as a function of $t$. To that end we have
\begin{multline}\label{eq:liftanal9}
\frac{d\psi_*(\calX,\beta,s,c_3,t)}{dt}  =   \mE_{\u^{(i,1)},\u^{(2)},\u^{(i,3)},u^{(4)}} \frac{d\lp \sum_{i=1}^{l}e^{\beta_i\lp s\sqrt{\sum_{j=1}^{m}\lp\sqrt{t}\u_j^{(i,1)}+\sqrt{1-t}\u_j^{(2)}\rp^2}+\sqrt{t}u^{(4)}+\sqrt{1-t}\u^{(i,3)}\rp} \rp^{c_3}}{dt} \\
 =   \mE_{\u^{(i,1)},\u^{(2)},\u^{(i,3)},u^{(4)}} c_3Z^{c_3-1}_* \frac{d\lp \sum_{i=1}^{l}e^{\beta_i\lp s\sqrt{\sum_{j=1}^{m}\lp\sqrt{t}\u_j^{(i,1)}+\sqrt{1-t}\u_j^{(2)}\rp^2}+\sqrt{t}u^{(4)}+\sqrt{1-t}\u^{(i,3)}\rp} \rp }{dt} \\
 =   \mE_{\u^{(i,1)},\u^{(2)},\u^{(i,3)},u^{(4)}} c_3Z^{c_3-1}_*  \sum_{i=1}^{l}\beta_iA^{(i)}_*\lp s\frac{dB^{(i)}}{dt}+\frac{u^{(4)}}{2\sqrt{t}}-\frac{\u^{(i,3)}}{2\sqrt{1-t}}\rp.
\end{multline}
Utilizing $\frac{dB^{(i)}}{dt}$ from (\ref{eq:genanal10}) we have
\begin{multline}\label{eq:liftanal11}
\frac{d\psi_*(\calX,\beta,s,c_3,t)}{dt}
 =   \mE_{\u^{(i,1)},\u^{(2)},\u^{(i,3)},u^{(4)}} \frac{sc_3}{2}\sum_{j=1}^{m} \sum_{i=1}^{l} \frac{ \beta_iA^{(i)}_*\lp(\u_j^{(i,1)})^2-(\u_j^{(2)})^2+\u_j^{(i,1)}\u_j^{(2)}\lp\frac{\sqrt{1-t}}{\sqrt{t}}-\frac{\sqrt{t}}{\sqrt{1-t}}\rp \rp }{Z^{1-c_3}_*B^{(i)}} \\
  + \mE_{\u^{(i,1)},\u^{(2)},\u^{(i,3)},u^{(4)}} \frac{c_3}{2}  \sum_{i=1}^{l}
\frac{\beta_iA^{(i)}_*u^{(4)}}{Z^{1-c_3}_*\sqrt{t}} - \mE_{\u^{(i,1)},\u^{(2)},\u^{(i,3)},u^{(4)}} \frac{c_3}{2}  \sum_{i=1}^{l}
\frac{\beta_iA^{(i)}_*\u^{(i,3)}}{Z^{1-c_3}_*\sqrt{1-t}}.
\end{multline}
As in Section \ref{sec:gencon}, all the terms appearing in the above sums can be handled separately. In fact, many of the computations already done in Section \ref{sec:gencon} can be repeated with minimal obvious changes. Below we present what the final results of these changes are for each of the terms.

%(the term $\mE_{\u^{(i,1)},\u^{(2)},\u^{(i,3)},u^{(4)}}  \frac{ A^{(i)}_*\u^{(4)}}{Z^{c_3-1}_*}$ does not have its an analogue in the analysis in Section \ref{sec:gencon}; however, it can easily be handled through the computation already used to handle $\mE_{\u^{(i,1)},\u^{(2)},\u^{(i,3)}}  \frac{ A^{(i)}\u^{(i,3)}}{Z}$ which will of course be reused to handle $\mE_{\u^{(i,1)},\u^{(2)},\u^{(i,3)},u^{(4)}}  \frac{ A^{(i)}_*\u^{(i,3)}}{Z_*}$ as well).

%%%%%%%%%%%%%%%%%%%%%%%%%%%%%%%%%%%%%%%%%%%%%%%%%%%%%%%%%%%%%%%%%%%%%%%%
\subsection{Computing the derivatives}
\label{sec:lifcompder}
%%%%%%%%%%%%%%%%%%%%%%%%%%%%%%%%%%%%%%%%%%%%%%%%%%%%%%%%%%%%%%%%%%%%%%%%

Here we show how one can quickly determine all the relevant derivatives.

%%%%%%%%%%%%%%%%%%%%%%%%%%%%%%%%%%%%%%%%%%%%%%%%%%%%%%%%%%%%%%%%%%%%%%%%
\textbf{\underline{\emph{1) Computing $\mE_{\u^{(i,1)},\u^{(2)},\u^{(i,3)},u^{(4)}}  \frac{ A^{(i)}_*\u_j^{(i,1)}\u_j^{(2)}}{Z^{1-c_3}_*B^{(i)}}$}}}
%%%%%%%%%%%%%%%%%%%%%%%%%%%%%%%%%%%%%%%%%%%%%%%%%%%%%%%%%%%%%%%%%%%%%%%%

\noindent  As in Section \ref{sec:gencon} we present two different characterizations.

\textbf{\underline{\emph{1.1) Fixing $\u^{(i,1)}$}}}

\noindent Similarly to what we had in (\ref{eq:genanal13}) we now have
\begin{eqnarray}\label{eq:liftgenanal13}
\mE_{\u^{(i,1)},\u^{(2)},\u^{(i,3)},u^{(4)}}  \frac{ A^{(i)}_*\u_j^{(i,1)}\u_j^{(2)}}{Z^{1-c_3}_*B^{(i)}} & = & \mE\lp
\sum_{p=1,p\neq i}^{l} \frac{(\x^{(i)})^T\x^{(p)}}{\|\x^{(i)}\|_2\|\x^{(p)}\|_2}\frac{d}{d\u_j^{(p,1)}}\lp \frac{ A^{(i)}_*\u_j^{(2)}}{Z^{1-c_3}_*B^{(i)}}\rp\rp \nonumber \\
& & +\mE \lp(\frac{(\x^{(i)})^T\x^{(i)}}{\|\x^{(i)}\|_2\|\x^{(i)}\|_2}\frac{d}{d\u_j^{(i,1)}}\lp \frac{ A^{(i)}_*\u_j^{(2)}}{Z^{1-c_3}_*B^{(i)}}\rp\rp.
\end{eqnarray}
Similarly to what was noted in Section \ref{sec:moving}, one can again closely follow the derivations from Section \ref{sec:hand1} and observe that every single step can be repeated with rather minimal differences. Namely, besides already observed changes from Section \ref{sec:moving}, $E(\u_j^{(i,1)}\u_j^{(p,1)})=\frac{(\x^{(i)})^T\x^{(p)}}{\|\x^{(i)}\|_2\|\x^{(p)}\|_2}$ and $E(\u_j^{(i,3)}\u_j^{(p,3)})=\frac{(\x^{(i)})^T\x^{(p)}}{\|\x^{(i)}\|_2\|\x^{(p)}\|_2}$, the powers of $Z_*$ and the constants that multiply them will also change (where we used to have power one now we will have $(1-c_3)$ and where we used to have two now we will have $(2-c_3)$; the constant that multiplied power two was minus one while now the corresponding constant that multiplies power $(2-c_3)$ will be $-(1-c_3)$). Following  (\ref{eq:genanal19}) and (\ref{eq:Mgenanal19}) we then have
\begin{multline}\label{eq:liftgenanal19}
\mE_{\u^{(i,1)},\u^{(2)},\u^{(i,3)},u^{(4)}}  \frac{ A^{(i)}_*\u_j^{(i,1)}\u_j^{(2)}}{Z^{1-c_3}_*B^{(i)}}
 =
\mE\lp\frac{ A^{(i)}_*}{Z^{1-c_3}_*B^{(i)}}\lp \lp \beta_i s -\frac{1}{B^{(i)}}\rp\frac{\u_j^{(2)}\sqrt{1-t}+\sqrt{t}\u_j^{(i,1)}}{B^{(i)}}\u^{(2)}\sqrt{t}\rp\rp \\
-(1-c_3)\mE\sum_{p=1}^{l} \frac{(\x^{(i)})^T\x^{(p)}}{\|\x^{(i)}\|_2\|\x^{(p)}\|_2} \frac{\beta_p s  A^{(i)}_*A^{(p)}_*\u_j^{(2)}\sqrt{t}(\u_j^{(p,1)}\sqrt{t}+\sqrt{1-t}\u_j^{(2)})}{Z^{2-c_3}_*B^{(i)}B^{(p)}}.
\end{multline}

\textbf{\underline{\emph{1.2) Fixing $\u^{(2)}$}}}

\noindent Similarly to what we had in (\ref{eq:genCanal1}) we now have
\begin{equation}\label{eq:liftgenCanal1}
\mE_{\u^{(i,1)},\u^{(2)},\u^{(i,3)},u^{(4)}}  \frac{ A^{(i)}_*\u_j^{(2)}\u^{(i,1)}}{Z^{1-c_3}_*B^{(i)}} =
\mE\lp\mE (\u_j^{(2)}\u_j^{(2)})\frac{d}{d\u_j^{(2)}}\lp \frac{ A^{(i)}_*\u^{(i,1)}}{Z^{1-c_3}_*B^{(i)}}\rp\rp
=\mE\lp\u^{(i,1)}\frac{d}{d\u_j^{(2)}}\lp \frac{ A^{(i)}_*}{Z^{1-c_3}_*B^{(i)}}\rp\rp,
\end{equation}
and utilizing the above observation about changing the powers of $Z_*$
from (\ref{eq:genCanal5}) and (\ref{eq:Mgenanal5}) we have
\begin{eqnarray}\label{eq:liftgenCanal5}
\mE_{\u^{(i,1)},\u^{(2)},\u^{(i,3)},u^{(4)}}  \frac{ A^{(i)}_*\u_j^{(i,1)}\u_j^{(2)}}{Z^{1-c_3}_*B^{(i)}}
 & = &
\mE\lp\frac{ A^{(i)}_*}{Z^{1-c_3}_*B^{(i)}}\lp \lp \beta_i s -\frac{1}{B^{(i)}}\rp\frac{\u_j^{(2)}\sqrt{1-t}+\sqrt{t}\u_j^{(i,1)}}{B^{(i)}}\u_j^{(i,1)}\sqrt{1-t}\rp\rp \nonumber \\
& &  -(1-c_3)\mE\sum_{p=1}^{l}\frac{\beta_p s  A^{(i)}_*A^{(p)}_*\u_j^{(i,1)}\sqrt{1-t}}{Z^{2-c_3}_*B^{(i)}B^{(p)}} (\u_j^{(2)}\sqrt{1-t}+\sqrt{t}\u_j^{(p,1)}).\nonumber \\
\end{eqnarray}

%%%%%%%%%%%%%%%%%%%%%%%%%%%%%%%%%%%%%%%%%%%%%%%%%%%%%%%%%%%%%%%%%%%%%%%%
\textbf{\underline{\emph{2) Computing $\mE_{\u^{(i,1)},\u^{(2)},\u^{(i,3)},u^{(4)}}  \frac{ A^{(i)}_*(\u_j^{(2)})^2}{Z^{1-c_3}_*B^{(i)}}$}}}
%%%%%%%%%%%%%%%%%%%%%%%%%%%%%%%%%%%%%%%%%%%%%%%%%%%%%%%%%%%%%%%%%%%%%%%%

\noindent Following (\ref{eq:gen1anal1}) we have
\begin{eqnarray}\label{eq:liftgen1anal1}
\mE_{\u^{(i,1)},\u^{(2)},\u^{(i,3)},u^{(4)}}  \frac{ A^{(i)}_*(\u_j^{(2)})^2}{Z^{1-c_3}_*B^{(i)}} & = &
\mE\lp\mE (\u_j^{(2)}\u_j^{(2)})\frac{d}{d\u_j^{(2)}}\lp \frac{ A^{(i)}_*\u_j^{(2)}}{Z^{1-c_3}_*B^{(i)}}\rp\rp\nonumber \\
& = & \mE\lp\frac{ A^{(i)}_*}{Z^{1-c_3}_*B^{(i)}}+\u_j^{(2)}\frac{d}{d\u_j^{(2)}}\lp \frac{ A^{(i)}_*}{Z^{1-c_3}_*B^{(i)}}\rp\rp.
\end{eqnarray}
Utilizing the above changing powers of $Z_*$ reasoning we then easily have after following the derivation of (\ref{eq:gen1anal5}) and (\ref{eq:Mgen1anal5})
\begin{eqnarray}\label{eq:liftgen1anal5}
\mE_{\u^{(i,1)},\u^{(2)},\u^{(i,3)},u^{(4)}}  \frac{ A^{(i)}_*(\u_j^{(2)})^2}{Z^{1-c_3}_*B^{(i)}}
 & = &
\mE\lp\frac{ A^{(i)}_*}{Z^{1-c_3}_*B^{(i)}}\lp 1 +\lp \beta_i s -\frac{1}{B^{(i)}}\rp\frac{\u_j^{(2)}\sqrt{1-t}+\sqrt{t}\u_j^{(i,1)}}{B^{(i)}}\u_j^{(2)}\sqrt{1-t}\rp\rp \nonumber \\
& &  -(1-c_3)\mE\sum_{p=1}^{l}\frac{\beta_p s  A^{(i)}_*A^{(p)}_*\u_j^{(2)}\sqrt{1-t}}{Z^{2-c_3}_*B^{(i)}B^{(p)}} (\u_j^{(2)}\sqrt{1-t}+\sqrt{t}\u_j^{(p,1)}).\nonumber \\
\end{eqnarray}

%%%%%%%%%%%%%%%%%%%%%%%%%%%%%%%%%%%%%%%%%%%%%%%%%%%%%%%%%%%%%%%%%%%%%%%%
\textbf{\underline{\emph{3) Computing $\mE_{\u^{(i,1)},\u^{(2)},\u^{(i,3)},u^{(4)}}  \frac{ A^{(i)}_*(\u_j^{(i,1)})^2}{Z^{1-c_3}_*B^{(i)}}$}}}
%%%%%%%%%%%%%%%%%%%%%%%%%%%%%%%%%%%%%%%%%%%%%%%%%%%%%%%%%%%%%%%%%%%%%%%%

\noindent Following (\ref{eq:gen2anal12}) we have
\begin{eqnarray}\label{eq:liftgen2anal12}
\mE_{\u^{(i,1)},\u^{(2)},\u^{(i,3)},u^{(4)}}  \frac{ A^{(i)}_*\u_j^{(i,1)}\u_j^{(2)}}{Z^{1-c_3}_*B^{(i)}}  & = &
\mE\lp\sum_{p=1,p\neq i}^{l} \mE (\u_j^{(i,1)}\u_j^{(p,1)})\frac{d}{d\u_j^{(p,1)}}\lp \frac{ A^{(i)}_*\u_j^{(i,1)}}{Z^{1-c_3}_*B^{(i)}}\rp \rp \nonumber \\
& & +\mE\lp\mE (\u_j^{(i,1)}\u_j^{(i,1)})\frac{d}{d\u_j^{(i,1)}}\lp \frac{ A^{(i)}_*\u_j^{(i,1)}}{Z^{1-c_3}_*B^{(i)}}\rp\rp,
\end{eqnarray}
and through the changing powers of $Z_*$ from (\ref{eq:gen2anal19}) and (\ref{eq:Mgen2anal19})
\begin{multline}\label{eq:liftgen2anal19}
\mE_{\u^{(i,1)},\u^{(2)},\u^{(i,3)},u^{(4)}}  \frac{ A^{(i)}_*(\u_j^{(i,1)})^2}{Z^{1-c_3}_*B^{(i)}}
 =
\mE\lp\frac{ A^{(i)}_*}{Z^{1-c_3}_*B^{(i)}}\lp 1 +\lp \beta_i s -\frac{1}{B^{(i)}}\rp\frac{\u_j^{(2)}\sqrt{1-t}+\sqrt{t}\u_j^{(i,1)}}{B^{(i)}}\u_j^{(i,1)}\sqrt{t}\rp\rp  \\  -(1-c_3)\mE\sum_{p=1}^{l} \frac{(\x^{(i)})^T\x^{(p)}}{\|\x^{(i)}\|_2\|\x^{(p)}\|_2} \frac{ \beta_p s A^{(i)}_*A^{(p)}_*\u_j^{(i,1)}\sqrt{t}(\u_j^{(p,1)}\sqrt{t}+\sqrt{1-t}\u_j^{(2)})}{Z^{2-c_3}_*B^{(i)}B^{(p)}}.
\end{multline}

%%%%%%%%%%%%%%%%%%%%%%%%%%%%%%%%%%%%%%%%%%%%%%%%%%%%%%%%%%%%%%%%%%%%%%%%
\textbf{\underline{\emph{4) Computing $\mE_{\u^{(i,1)},\u^{(2)},\u^{(i,3)},u^{(4)}}  \frac{ A^{(i)}_*\u^{(i,3)}}{Z^{1-c_3}_*}$}}}
%%%%%%%%%%%%%%%%%%%%%%%%%%%%%%%%%%%%%%%%%%%%%%%%%%%%%%%%%%%%%%%%%%%%%%%%

\noindent From (\ref{eq:genDanal12}) we also have
\begin{equation}\label{eq:liftgenDanal12}
\mE_{\u^{(i,1)},\u^{(2)},\u^{(i,3)},u^{(4)}}  \frac{ A^{(i)}_*\u^{(i,3)}}{Z^{1-c_3}_*} =
\mE\lp\sum_{p=1,p\neq i}^{l} \mE (\u^{(i,3)}\u^{(p,3)})\frac{d}{d\u^{(p,3)}}\lp \frac{ A^{(i)}_*}{Z^{1-c_3}_*}\rp
+\mE (\u^{(i,3)}\u^{(i,3)})\frac{d}{d\u^{(i,3)}}\lp \frac{ A^{(i)}_*}{Z^{1-c_3}_*}\rp\rp,
\end{equation}
and, similarly as above, through the derivation of (\ref{eq:genDanal19}) and (\ref{eq:MgenDanal19})
\begin{equation}\label{eq:liftgenDanal19}
\mE_{\u^{(i,1)},\u^{(2)},\u^{(i,3)},u^{(4)}}  \frac{ A^{(i)}_*\u^{(i,3)}}{Z^{1-c_3}_*}  =
 \mE\frac{\beta_i A^{(i)}_*\sqrt{1-t}}{Z^{1-c_3}_*} -(1-c_3)\mE\sum_{p=1}^{l} \frac{(\x^{(i)})^T\x^{(p)}}{\|\x^{(i)}\|_2\|\x^{(p)}\|_2}\frac{\beta_p A^{(i)}_*  A^{(p)}_* \sqrt{1-t}}{Z^{2-c_3}_*}.
\end{equation}

\vspace{.1in}
%%%%%%%%%%%%%%%%%%%%%%%%%%%%%%%%%%%%%%%%%%%%%%%%%%%%%%%%%%%%%%%%%%%%%%%%
\textbf{\underline{\emph{5) Computing $\mE_{\u^{(i,1)},\u^{(2)},\u^{(i,3)},u^{(4)}}  \frac{ A^{(i)}_*\u^{(4)}}{Z^{1-c_3}_*}$}}}
%%%%%%%%%%%%%%%%%%%%%%%%%%%%%%%%%%%%%%%%%%%%%%%%%%%%%%%%%%%%%%%%%%%%%%%%

\noindent Utilizing integration by parts we obtain
\begin{equation}\label{eq:liftgenEanal12}
\mE_{\u^{(i,1)},\u^{(2)},\u^{(i,3)},u^{(4)}}  \frac{ A^{(i)}_*\u^{(4)}}{Z^{1-c_3}_*} =
\mE\lp\mE (\u^{(4)}\u^{(4)})\frac{d}{d\u^{(4)}}\lp \frac{ A^{(i)}_*}{Z^{1-c_3}_*}\rp\rp
=\mE\lp\frac{d}{d\u^{(4)}}\lp \frac{ A^{(i)}_*}{Z^{1-c_3}_*}\rp\rp.
\end{equation}
We also have
\begin{eqnarray}\label{eq:liftgenEanal18}
\frac{d}{d\u^{(4)}}\lp \frac{ A^{(i)}_*}{Z^{1-c_3}_*}\rp  & = &
\frac{1}{Z^{1-c_3}_*}\frac{d A^{(i)}_*}{d\u^{(4)}}-(1-c_3)\frac{A^{(i)}_*}{Z^{2-c_3}}\frac{dZ_*}{d\u^{(4)}} \nonumber \\
& = &   \frac{\beta_i A^{(i)}_*\sqrt{t}}{Z^{1-c_3}_*}
-(1-c_3)\frac{ A^{(i)}_*}{Z^{2-c_3}_*}\sum_{p=1}^{l}\frac{d A^{(i)}_*}{d\u^{(4)}}  = \frac{\beta_i A^{(i)}_*\sqrt{t}}{Z^{1-c_3}_*} -(1-c_3)\sum_{p=1}^{l}\frac{ A^{(i)}_*A^{(p)}_*\beta_p\sqrt{t}}{Z^{2-c_3}_*}.\nonumber \\
\end{eqnarray}
A combination of (\ref{eq:liftgenEanal12}) and (\ref{eq:liftgenEanal18}) gives
\begin{equation}\label{eq:liftgenEanal19}
\mE_{\u^{(i,1)},\u^{(2)},\u^{(i,3)},u^{(4)}}  \frac{ A^{(i)}_*\u^{(4)}}{Z^{1-c_3}_*}  =
\mE \frac{\beta_i A^{(i)}_*\sqrt{t}}{Z^{1-c_3}_*} -(1-c_3)\mE\sum_{p=1}^{l}\frac{ A^{(i)}_*A^{(p)}_*\beta_p\sqrt{t}}{Z^{2-c_3}_*}.
\end{equation}

%%%%%%%%%%%%%%%%%%%%%%%%%%%%%%%%%%%%%%%%%%%%%%%%%%%%%%%%%%%%%%%%%%%%%%%%
\subsection{Connecting all pieces together}
\label{sec:liftconalt}
%%%%%%%%%%%%%%%%%%%%%%%%%%%%%%%%%%%%%%%%%%%%%%%%%%%%%%%%%%%%%%%%%%%%%%%%

Using (\ref{eq:liftanal11}), (\ref{eq:liftgenanal19}), (\ref{eq:liftgenCanal5}), (\ref{eq:liftgen1anal5}), (\ref{eq:liftgen2anal19}), (\ref{eq:liftgenDanal19}), and (\ref{eq:liftgenEanal19}) and following the procedure from Section \ref{sec:conalt} we obtain similarly to (\ref{eq:conalt4}) and (\ref{eq:Mconalt4})
\begin{eqnarray}\label{eq:liftconalt4}
\frac{d\psi_*(\calX,\beta,s,c_3,t)}{dt}
 & = & \frac{sc_3(1-c_3)}{2} \sum_{i=1}^{l}\mE\sum_{p=1}^{l}\sum_{j=1}^{m} (\|\x^{(i)}\|_2\|\x^{(p)}\|_2-(\x^{(i)})^T\x^{(p)})\nonumber \\
  & & \times \frac{ \beta^2 s A^{(i)}_*A^{(p)}_*(\u_j^{(i,1)}\sqrt{t}+\u_j^{(2)}\sqrt{1-t})(\u_j^{(p,1)}\sqrt{t}+\sqrt{1-t}\u_j^{(2)})}{Z^{2-c_3}_*B^{(i)}B^{(p)}} \nonumber \\
& & +\frac{c_3}{2}   \mE\sum_{i=1}^{l}\lp \mE \frac{\beta_i^2 A^{(i)}_*}{Z^{1-c_3}_*} -(1-c_3)\mE\sum_{p=1}^{l}\frac{ A^{(i)}_*A^{(p)}_*\beta_i\beta_p}{Z^{2-c_3}_*}\rp \nonumber \\
& & - \frac{c_3}{2}   \mE\sum_{i=1}^{l}\lp\mE\frac{\beta _i^2A^{(i)}_*}{Z^{1-c_3}_*} -(1-c_3)\mE\sum_{p=1}^{l} \frac{(\x^{(i)})^T\x^{(p)}}{\|\x^{(i)}\|_2\|\x^{(p)}\|_2}\frac{\beta_i\beta_p A^{(i)}_*  A^{(p)}_* }{Z^{2-c_3}_*}\rp.
\end{eqnarray}
The sum to the right of the equality sign in the first row is obtained in the same way as the corresponding one in (\ref{eq:conalt4}) with the above mentioned small adjustment to the power of $Z_*$ and the corresponding multiplying constant and to the cross-correlations. The second row follows from (\ref{eq:liftgenEanal19}) and the third row follows from (\ref{eq:liftgenDanal19}). From (\ref{eq:liftconalt4}) we further have
\begin{eqnarray}\label{eq:liftconalt5}
\frac{d\psi_*(\calX,\beta,s,c_3,t)}{dt}
 & = & \frac{c_3(1-c_3)}{2} \sum_{i=1}^{l}\mE\sum_{p=1}^{l}\sum_{j=1}^{m} (\|\x^{(i)}\|_2\|\x^{(p)}\|_2-(\x^{(i)})^T\x^{(p)})\nonumber \\
  & & \times \frac{ \beta^2  A^{(i)}_*A^{(p)}_*(\u_j^{(i,1)}\sqrt{t}+\u_j^{(2)}\sqrt{1-t})(\u_j^{(p,1)}\sqrt{t}+\sqrt{1-t}\u_j^{(2)})}{Z^{2-c_3}_*B^{(i)}B^{(p)}} \nonumber \\
& & - \frac{c_3(1-c_3)}{2} \mE \lp\sum_{i=1}^{l}\sum_{p=1}^{l} (\|\x^{(i)}\|_2\|\x^{(p)}\|_2-(\x^{(i)})^T\x^{(p)})\frac{\beta^2 A^{(i)}_*  A^{(p)}_* }{Z^{2-c_3}_*}\rp.
\end{eqnarray}
(\ref{eq:liftconalt5}) then easily gives
\begin{eqnarray}\label{eq:liftconalt6}
\frac{d\psi_*(\calX,\beta,s,c_3,t)}{dt} & = &  -\frac{\beta^2 c_3(1-c_3)}{2} \sum_{i=1}^{l}\mE\sum_{p=1}^{l}\frac{ A^{(i)}_*A^{(p)}_*}{Z^{(2-c_3)}_*}
(\|\x^{(i)}\|_2\|\x^{(p)}\|_2-(\x^{(i)})^T\x^{(p)})\nonumber \\
 & &  \times \lp1-
\sum_{j=1}^{m}\frac{(\u_j^{(i,1)}\sqrt{t}+\u_j^{(2)}\sqrt{1-t})(\u_j^{(p,1)}\sqrt{t}+\sqrt{1-t}\u_j^{(2)})}{B^{(i)}B^{(p)}}\rp.
\end{eqnarray}
Finally we have
\begin{eqnarray}\label{eq:liftconalt7}
\frac{d\psi_*(\calX,\beta,s,c_3,t)}{dt} & = &  -\frac{\beta^2 c_3(1-c_3)}{2} \sum_{i=1}^{l}\mE\sum_{p=1}^{l} \frac{ A^{(i)}_*A^{(p)}_*}{Z^{(2-c_3)}_*} (\|\x^{(i)}\|_2\|\x^{(p)}\|_2-(\x^{(i)})^T\x^{(p)}) \nonumber \\
& & \times \lp1-
\frac{(\u^{(i,1)}\sqrt{t}+\u^{(2)}\sqrt{1-t})^T(\u^{(p,1)}\sqrt{t}+\sqrt{1-t}\u^{(2)})}{B^{(i)}B^{(p)}}\rp.\nonumber\\
\end{eqnarray}
We summarize the above results in the following theorem.
\begin{theorem}
\label{thm:thm2}
  Assume the setup of Theorem \ref{thm:thm1}. We then have
\begin{eqnarray}\label{eq:liftco1eq1}
\psi_*(\calX,\beta,s,c_3,t)= \psi_*(\calX,\beta,s,c_3,0)+\int_{0}^{t}\frac{d\psi_*(\calX,\beta,s,c_3,t)}{dt}dt,
\end{eqnarray}
where $\frac{d\psi_*(\calX,\beta,s,c_3,t)}{dt}$ is given by (\ref{eq:liftconalt7}).
\end{theorem}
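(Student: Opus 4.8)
The proof follows the template already executed for Theorem~\ref{thm:thm1} and its non-spherical refinement Theorem~\ref{thm:thm1a}, the only novelty being the extra factor $c_3$ produced by differentiating the outer power $(Z_*)^{c_3}$. The plan is: (i) justify differentiation under the expectation in (\ref{eq:liftanal8}); (ii) produce the raw derivative (\ref{eq:liftanal11}) by the chain rule and the known expression (\ref{eq:genanal10}) for $dB^{(i)}/dt$; (iii) evaluate each of the five expectation terms occurring in (\ref{eq:liftanal11}) by Gaussian integration by parts; (iv) substitute and cancel to reach (\ref{eq:liftconalt7}); and (v) deduce (\ref{eq:liftco1eq1}) from the fundamental theorem of calculus.

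First I would observe that, for fixed $\calX,\beta,s,c_3$, the integrand $(Z_*)^{c_3}$ is smooth in $t\in(0,1)$ and, along with its $t$-derivative, is dominated by an integrable envelope in the underlying Gaussians: the exponents are affine in $G,\u^{(2)},\h,\u^{(4)}$ so all exponential moments on compact $t$-subintervals are finite, and the $1/\sqrt t$, $1/\sqrt{1-t}$ prefactors are locally bounded away from the endpoints. This legitimizes interchanging $d/dt$ with $\mE$. Applying the chain rule then gives (\ref{eq:liftanal11}) exactly as (\ref{eq:genanal11}) was obtained, except that the outer derivative contributes $c_3 Z_*^{c_3-1}$ and each inner $\beta$ becomes $\beta_i=\beta\|\x^{(i)}\|_2$.

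The heart of the argument is the term-by-term computation of Section~\ref{sec:lifcompder}. For each of $\mE\frac{A^{(i)}_*\u_j^{(i,1)}\u_j^{(2)}}{Z_*^{1-c_3}B^{(i)}}$, $\mE\frac{A^{(i)}_*(\u_j^{(2)})^2}{Z_*^{1-c_3}B^{(i)}}$, $\mE\frac{A^{(i)}_*(\u_j^{(i,1)})^2}{Z_*^{1-c_3}B^{(i)}}$, $\mE\frac{A^{(i)}_*\u^{(i,3)}}{Z_*^{1-c_3}}$, and $\mE\frac{A^{(i)}_*\u^{(4)}}{Z_*^{1-c_3}}$, I would apply Gaussian integration by parts in the relevant standard-normal variable, using the covariances $\mE(\u_j^{(i,1)}\u_j^{(p,1)})=\mE(\u_j^{(i,3)}\u_j^{(p,3)})=\frac{(\x^{(i)})^T\x^{(p)}}{\|\x^{(i)}\|_2\|\x^{(p)}\|_2}$ and $\mE(\u_j^{(2)}\u_j^{(2)})=\mE(\u^{(4)}\u^{(4)})=1$. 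Relative to Sections~\ref{sec:hand1}--\ref{sec:hand4} and Section~\ref{sec:moving} the only change is clerical: differentiating $Z_*^{-(1-c_3)}$ rather than $Z^{-1}$ replaces the power-one denominators by power-$(1-c_3)$ denominators and, in the cross terms, the power-two denominators by power-$(2-c_3)$ denominators with the multiplicative constant $-1$ becoming $-(1-c_3)$. This produces (\ref{eq:liftgenanal19})--(\ref{eq:liftgenEanal19}).

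Finally I would substitute these five identities into (\ref{eq:liftanal11}) and carry out the cancellations and regroupings of Section~\ref{sec:conalt}/Section~\ref{sec:Mconalt}: the diagonal $A^{(i)}_*/(Z_*^{1-c_3}B^{(i)})$ pieces cancel pairwise, the $\u^{(4)}$-contributions from step~5 cancel the bare $\beta_i A^{(i)}_*/Z_*^{1-c_3}$ terms, and pairing the first double sum with the third and the second with the fourth completes the sums $(\u_j^{(i,1)}\sqrt t+\u_j^{(2)}\sqrt{1-t})(\u_j^{(p,1)}\sqrt t+\sqrt{1-t}\u_j^{(2)})$, which together with $\|\u^{(i,1)}\sqrt t+\u^{(2)}\sqrt{1-t}\|_2=B^{(i)}$ yields the factor $1-\frac{(\u^{(i,1)}\sqrt t+\u^{(2)}\sqrt{1-t})^T(\u^{(p,1)}\sqrt t+\sqrt{1-t}\u^{(2)})}{B^{(i)}B^{(p)}}$ and the common coefficient $\|\x^{(i)}\|_2\|\x^{(p)}\|_2-(\x^{(i)})^T\x^{(p)}$; this is exactly (\ref{eq:liftconalt7}). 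Since $\psi_*(\calX,\beta,s,c_3,\cdot)$ is continuous on $[0,1]$ and $C^1$ on $(0,1)$ with derivative (\ref{eq:liftconalt7}), (\ref{eq:liftco1eq1}) follows by integrating. The main obstacle is purely organizational: keeping signs and the $c_3$-dependent powers of $Z_*$ straight through the long cancellation, together with the minor analytic point that the apparent $1/\sqrt t$ and $1/\sqrt{1-t}$ singularities near the endpoints genuinely cancel, so the integration-by-parts steps and the differentiation under $\mE$ are valid uniformly enough for the final identity to hold on all of $[0,1]$.
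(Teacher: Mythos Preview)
Your proposal is correct and follows essentially the same approach as the paper: compute the raw derivative (\ref{eq:liftanal11}), apply Gaussian integration by parts term-by-term with the clerical changes you identify (normalized covariances, $\beta\to\beta_i$, $Z^{-1}\to Z_*^{-(1-c_3)}$, $Z^{-2}\to Z_*^{-(2-c_3)}$ with multiplier $-(1-c_3)$), and then regroup as in Section~\ref{sec:conalt}/\ref{sec:Mconalt} to reach (\ref{eq:liftconalt7}). Your additional care about differentiation under the expectation and the endpoint behavior of the $1/\sqrt t$, $1/\sqrt{1-t}$ prefactors is rigor the paper omits, not a different route.
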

\begin{proof}
  Follows trivially from the above discussion.
\end{proof}

\begin{corollary}\label{cor:liftcor1}
  Assume the setup of Theorem \ref{thm:thm2}.

  1) If $0< c_3< 1$ then $\frac{d\psi_*(\calX,\beta,s,c_3,t)}{dt}<0$ and $\psi_*(\calX,\beta,s,c_3,t)$ is decreasing in $t$ and the following comparison principle holds
\begin{eqnarray}\label{eq:liftco2aeq1}
\lim_{\beta\rightarrow\infty}\psi_*(\calX,\beta,s,c_3,0) \geq \lim_{\beta\rightarrow\infty} \psi_*(\calX,\beta,s,c_3,t)\geq \lim_{\beta\rightarrow\infty}\psi_*(\calX,\beta,s,c_3,1).
\end{eqnarray}

  2) If $c_3> 1$ or $c_3< 0$ then $\frac{d\psi_*(\calX,\beta,s,c_3,t)}{dt}>0$ and $\psi_*(\calX,\beta,s,c_3,t)$ is increasing in $t$ and the following comparison principle holds
\begin{eqnarray}\label{eq:liftco2aeq2}
\lim_{\beta\rightarrow\infty}\psi_*(\calX,\beta,s,c_3,0) \leq \lim_{\beta\rightarrow\infty} \psi_*(\calX,\beta,s,c_3,t)\leq \lim_{\beta\rightarrow\infty}\psi_*(\calX,\beta,s,c_3,1).
\end{eqnarray}
\end{corollary}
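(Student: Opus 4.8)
The plan is to extract everything from the closed-form expression (\ref{eq:liftconalt7}) for $\frac{d\psi_*(\calX,\beta,s,c_3,t)}{dt}$ that Theorem \ref{thm:thm2} already supplies: all of the Gaussian integration by parts and the cancellations have been carried out there, so the remaining task is purely to read off the sign of the right-hand side as a function of $c_3$ and then integrate. The point is that the prefactor in (\ref{eq:liftconalt7}) is $-\frac{\beta^2}{2}\,c_3(1-c_3)$ multiplying a double sum all of whose terms are nonnegative, so that the sign of the derivative coincides with the sign of $-c_3(1-c_3)$.

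First I would establish that, for every fixed $t\in(0,1)$, the double sum
\[
\sum_{i=1}^{l}\mE\sum_{p=1}^{l} \frac{ A^{(i)}_*A^{(p)}_*}{Z^{2-c_3}_*}\,(\|\x^{(i)}\|_2\|\x^{(p)}\|_2-(\x^{(i)})^T\x^{(p)})\lp1- \frac{(\u^{(i,1)}\sqrt{t}+\u^{(2)}\sqrt{1-t})^T(\u^{(p,1)}\sqrt{t}+\sqrt{1-t}\u^{(2)})}{B^{(i)}B^{(p)}}\rp
\]
is nonnegative. This rests on three elementary pointwise facts. The quantities $A^{(i)}_*$, $A^{(p)}_*$ and $Z_*$ are strictly positive (sums and products of exponentials), so $A^{(i)}_*A^{(p)}_*/Z_*^{2-c_3}>0$ for every real $c_3$. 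The factor $\|\x^{(i)}\|_2\|\x^{(p)}\|_2-(\x^{(i)})^T\x^{(p)}$ is nonnegative by the Cauchy--Schwarz inequality. Finally, since $B^{(i)}=\|\u^{(i,1)}\sqrt{t}+\u^{(2)}\sqrt{1-t}\|_2$ and $B^{(p)}=\|\u^{(p,1)}\sqrt{t}+\u^{(2)}\sqrt{1-t}\|_2$, a second application of Cauchy--Schwarz gives $(\u^{(i,1)}\sqrt{t}+\u^{(2)}\sqrt{1-t})^T(\u^{(p,1)}\sqrt{t}+\sqrt{1-t}\u^{(2)})\leq B^{(i)}B^{(p)}$, so the last parenthesis is nonnegative. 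Each summand is thus a product of nonnegative quantities, its expectation is nonnegative, and hence so is the whole double sum.

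With that in hand the case analysis is immediate. If $0<c_3<1$ then $c_3(1-c_3)>0$, hence $\frac{d\psi_*}{dt}\leq 0$ and $\psi_*(\calX,\beta,s,c_3,t)$ is nonincreasing in $t$; if $c_3>1$ or $c_3<0$ then $c_3(1-c_3)<0$, hence $\frac{d\psi_*}{dt}\geq 0$ and $\psi_*(\calX,\beta,s,c_3,t)$ is nondecreasing in $t$. Feeding this into the integral identity (\ref{eq:liftco1eq1}) of Theorem \ref{thm:thm2}, integrated separately over $[0,t]$ and over $[t,1]$, yields $\psi_*(\calX,\beta,s,c_3,0)\geq\psi_*(\calX,\beta,s,c_3,t)\geq\psi_*(\calX,\beta,s,c_3,1)$ in the first case and the reversed chain in the second, for every fixed finite $\beta$; letting $\beta\rightarrow\infty$ preserves these inequalities and gives (\ref{eq:liftco2aeq1}) and (\ref{eq:liftco2aeq2}) respectively.

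Since the algebraic core of the argument is inherited from Theorem \ref{thm:thm2}, I do not expect a genuine obstacle here; the two points deserving a sentence of care are (a) the degenerate configuration in which all elements of $\calX$ coincide, where every factor $\|\x^{(i)}\|_2\|\x^{(p)}\|_2-(\x^{(i)})^T\x^{(p)}$ vanishes and $\psi_*(\calX,\beta,s,c_3,t)$ is simply constant in $t$, so the strict monotonicity weakens to constancy while the comparison still holds; and (b) the interchange of the limit $\beta\rightarrow\infty$ with the inequalities, which is immediate because each inequality already holds at every finite $\beta$. It is also worth recording in passing that $Z_*^{c_3}$ and $Z_*^{2-c_3}$ are well defined for arbitrary real $c_3$ precisely because $Z_*>0$ almost surely.
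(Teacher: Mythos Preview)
Your proposal is correct and follows essentially the same approach as the paper. The paper's own proof is just ``Follows trivially by the above arguments,'' where the ``above arguments'' are precisely the derivation of (\ref{eq:liftconalt7}) together with the Cauchy--Schwarz observation (already made after (\ref{eq:conalt5})) that $\|\u^{(i,1)}\sqrt{t}+\u^{(2)}\sqrt{1-t}\|_2=B^{(i)}$ forces the bracketed factor to be nonnegative; you have simply spelled out these nonnegativity checks explicitly and added the harmless remarks on the degenerate case and the $\beta\to\infty$ limit.
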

\begin{proof}
  Follows trivially by the above arguments.
\end{proof}

%%%%%%%%%%%%%%%%%%%%%%%%%%%%%%%%%%%%%%%%%%%%%%%%%%%%%%%%%%%%%%%%%
\subsection{Simulations}
\label{sec:liftedsim}
%%%%%%%%%%%%%%%%%%%%%%%%%%%%%%%%%%%%%%%%%%%%%%%%%%%%%%%%%%%%%%%%%

Below we present a few numerical results that highlight the precision one can achieve with the above theory. To facilitate the following we again parallel the presentation of the results as much as possible to the ones already presented earlier. To that end, we again chose $m=5$, $n=5$, $l=10$, and selected set $\calX=\calX^{+}$. The derivatives $\frac{d\psi_*(\calX,\beta,s,c_3,t)}{dt}$ are again simulated according to (\ref{eq:liftanal11}) and (\ref{eq:liftconalt7}) and $\psi_*(\calX,\beta,s,c_3,t)$ was computed according to (\ref{eq:liftco1eq1}). As earlier, we also simulated $\psi_*(\calX,\beta,s,c_3,t)$ directly through (\ref{eq:liftanal8}) and averaged all quantities over a set of $5e4$ experiments. Also, all simulations presented in this subsection were done with $\beta=3$ and $c_3=.1$. As earlier, we simulated two different scenarios with all other parameters being the same: 1) $s=1$ and 2) $s=-1$.

\textbf{\underline{\emph{1) $s=1$ -- numerical results}}}

The results that we obtained for $s=1$ are presented in Figure \ref{fig:liftedsplus1xnorm1psi} and Table \ref{tab:liftedsplus1xnorm1psi}. As usual, Figure \ref{fig:liftedsplus1xnorm1psi} shows the entire range for $t$ (i.e. $t\in(0,1)$) whereas Table \ref{tab:liftedsplus1xnorm1psi} focuses on several concrete values of $t$. In Table \ref{tab:liftedsplus1xnorm1psi}, the values for $\psi_*(\calX,\beta,s,c_3,t)$ are given in two forms: 1) the value itself and 2) the adjusted value $\lp\frac{1}{\beta c_3}\log\lp \psi_*(\calX,\beta,s,c_3,t)\rp-\frac{\beta c_3}{2}\rp/\sqrt{n}$ (the adjusted value is a way how one can think about connecting $\psi_*(\calX,\beta,s,c_3,t)$ and $\psi(\calX,\beta,s,t)$; we will discuss this in a greater detail below and whenever we deem as needed, we will present these values as well). As both, Figure \ref{fig:liftedsplus1xnorm1psi} and Table \ref{tab:liftedsplus1xnorm1psi}, show, the agreement between all presented results is again overwhelming.

\begin{figure}[htb]
%\begin{minipage}[b]{.5\linewidth}
\centering
\centerline{\epsfig{figure=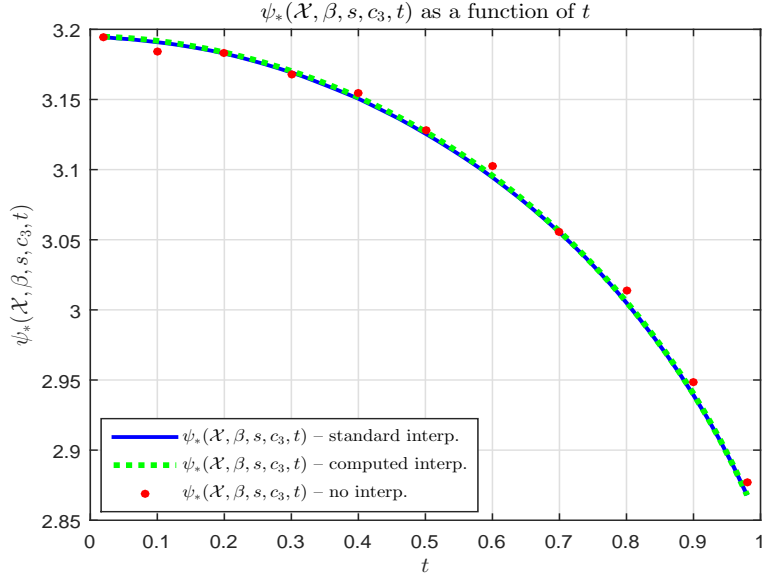,width=11.5cm,height=8cm}}
%\end{minipage}
%\begin{minipage}[b]{.5\linewidth}
%\centering
%\centerline{\epsfig{figure=finprerral08.eps,width=9cm,height=6.5cm}}
%\end{minipage}
\caption{$\psi_*(\calX,\beta,s,c_3,t)$ as a function of $t$; $m=5$, $n=5$, $l=10$, $\calX=\calX^{+}$, $\beta=3$, $s=1$, $c_3=.1$}
\label{fig:liftedsplus1xnorm1psi}
\end{figure}
\begin{table}[h]
\caption{Simulated results --- $m=5$, $n=5$, $l=10$, $\calX=\calX^{+}$, $\beta=3$, $s=1$, $c_3=.1$}\vspace{.1in}
\hspace{-0in}\centering
\begin{tabular}{||c||c|c|c|c|c||}\hline\hline
$ t$  &  $\frac{d\psi_*}{dt}$; (\ref{eq:liftanal11}) & $\frac{d\psi_*}{dt}$;  (\ref{eq:liftconalt7}) & $\psi_*$;  (\ref{eq:liftanal11}) and (\ref{eq:liftco1eq1}) & $\psi_*$; (\ref{eq:liftconalt7}) and (\ref{eq:liftco1eq1}) & $\psi_*$;  (\ref{eq:liftanal8})\\  \hline\hline
$ 0.1000 $ & $ -0.0549 $ & $ -0.0525 $ & $\bl{\mathbf{ 3.1908 / 1.6626 }}$ & $\bl{\mathbf{ 3.1918 / 1.6630 }}$ & $\mathbf{ 3.1846 / 1.6597 }$
\\ \hline$ 0.2000 $ & $ -0.1054 $ & $ -0.1022 $ & $\bl{\mathbf{ 3.1827 / 1.6588 }}$ & $\bl{\mathbf{ 3.1836 / 1.6592 }}$ & $\mathbf{ 3.1832 / 1.6590 }$
\\ \hline$ 0.3000 $ & $ -0.1523 $ & $ -0.1541 $ & $\bl{\mathbf{ 3.1693 / 1.6525 }}$ & $\bl{\mathbf{ 3.1703 / 1.6529 }}$ & $\mathbf{ 3.1674 / 1.6516 }$
\\ \hline$ 0.4000 $ & $ -0.1999 $ & $ -0.2093 $ & $\bl{\mathbf{ 3.1506 / 1.6436 }}$ & $\bl{\mathbf{ 3.1516 / 1.6441 }}$ & $\mathbf{ 3.1546 / 1.6455 }$
\\ \hline$ 0.5000 $ & $ -0.2761 $ & $ -0.2712 $ & $\bl{\mathbf{ 3.1256 / 1.6318 }}$ & $\bl{\mathbf{ 3.1270 / 1.6325 }}$ & $\mathbf{ 3.1279 / 1.6329 }$
\\ \hline$ 0.6000 $ & $ -0.3448 $ & $ -0.3408 $ & $\bl{\mathbf{ 3.0944 / 1.6168 }}$ & $\bl{\mathbf{ 3.0956 / 1.6174 }}$ & $\mathbf{ 3.1024 / 1.6207 }$
\\ \hline$ 0.7000 $ & $ -0.4328 $ & $ -0.4311 $ & $\bl{\mathbf{ 3.0551 / 1.5978 }}$ & $\bl{\mathbf{ 3.0560 / 1.5982 }}$ & $\mathbf{ 3.0555 / 1.5980 }$
\\ \hline$ 0.8000 $ & $ -0.5582 $ & $ -0.5535 $ & $\bl{\mathbf{ 3.0051 / 1.5731 }}$ & $\bl{\mathbf{ 3.0057 / 1.5735 }}$ & $\mathbf{ 3.0138 / 1.5775 }$
\\ \hline$ 0.9000 $ & $ -0.7420 $ & $ -0.7393 $ & $\bl{\mathbf{ 2.9392 / 1.5401 }}$ & $\bl{\mathbf{ 2.9401 / 1.5405 }}$ & $\mathbf{ 2.9482 / 1.5447 }$
\\ \hline\hline
\end{tabular}
\label{tab:liftedsplus1xnorm1psi}
\end{table}

\textbf{\underline{\emph{2) $s=-1$ -- numerical results}}}

The results that we obtained for $s=-1$ are presented in Figure \ref{fig:liftedsmin1xnorm1psi} and Table \ref{tab:liftedsmin1xnorm1psi}. Figure \ref{fig:liftedsmin1xnorm1psi} again shows the entire range for $t$, whereas Table \ref{tab:liftedsmin1xnorm1psi} focuses on several concrete values of $t$. As was the case for $s=1$, here we again have that both, Figure \ref{fig:liftedsmin1xnorm1psi} and Table \ref{tab:liftedsmin1xnorm1psi}, show that the agreement between all presented results is rather overwhelming.

\begin{figure}[htb]
%\begin{minipage}[b]{.5\linewidth}
\centering
\centerline{\epsfig{figure=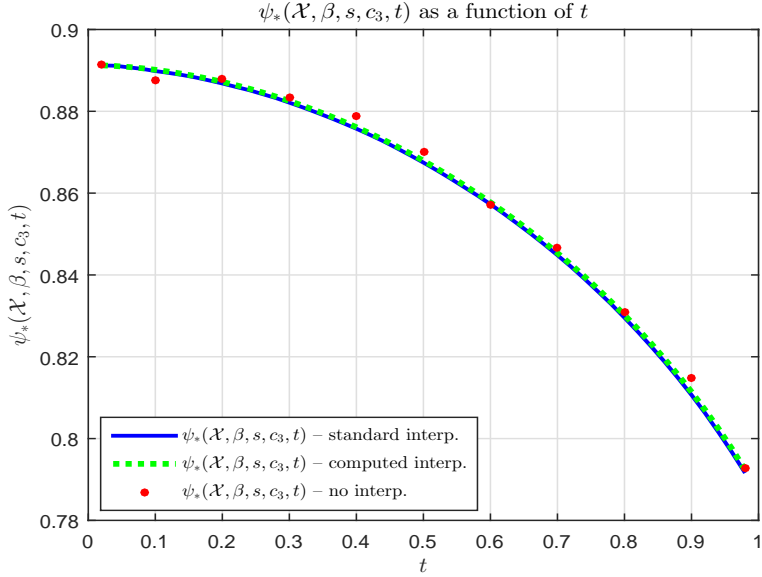,width=11.5cm,height=8cm}}
%\end{minipage}
%\begin{minipage}[b]{.5\linewidth}
%\centering
%\centerline{\epsfig{figure=finprerral08.eps,width=9cm,height=6.5cm}}
%\end{minipage}
\caption{$\psi_*(\calX,\beta,s,c_3,t)$ as a function of $t$; $m=5$, $n=5$, $l=10$, $\calX=\calX^{+}$, $\beta=3$, $s=-1$, $c_3=.1$}
\label{fig:liftedsmin1xnorm1psi}
\end{figure}
\begin{table}[h]
\caption{Simulated results --- $m=5$, $n=5$, $l=10$, $\calX=\calX^{+}$, $\beta=3$, $s=-1$, $c_3=.1$}\vspace{.1in}
\hspace{-0in}\centering
\begin{tabular}{||c||c|c|c|c|c||}\hline\hline
$ t$  &  $\frac{d\psi_*}{dt}$; (\ref{eq:liftanal11}) & $\frac{d\psi_*}{dt}$;  (\ref{eq:liftconalt7}) & $\psi_*$;  (\ref{eq:liftanal11}) and (\ref{eq:liftco1eq1}) & $\psi_*$; (\ref{eq:liftconalt7}) and (\ref{eq:liftco1eq1}) & $\psi_*$;  (\ref{eq:liftanal8})\\  \hline\hline
$ 0.1000 $ & $ -0.0273 $ & $ -0.0190 $ & $\bl{\mathbf{ 0.8898 / -0.2411 }}$ & $\bl{\mathbf{ 0.8902 / -0.2405 }}$ & $\mathbf{ 0.8875 / -0.2450 }$
\\ \hline$ 0.2000 $ & $ -0.0378 $ & $ -0.0367 $ & $\bl{\mathbf{ 0.8868 / -0.2462 }}$ & $\bl{\mathbf{ 0.8872 / -0.2455 }}$ & $\mathbf{ 0.8878 / -0.2445 }$
\\ \hline$ 0.3000 $ & $ -0.0563 $ & $ -0.0541 $ & $\bl{\mathbf{ 0.8821 / -0.2541 }}$ & $\bl{\mathbf{ 0.8825 / -0.2534 }}$ & $\mathbf{ 0.8834 / -0.2519 }$
\\ \hline$ 0.4000 $ & $ -0.0693 $ & $ -0.0715 $ & $\bl{\mathbf{ 0.8757 / -0.2649 }}$ & $\bl{\mathbf{ 0.8761 / -0.2642 }}$ & $\mathbf{ 0.8788 / -0.2596 }$
\\ \hline$ 0.5000 $ & $ -0.0900 $ & $ -0.0899 $ & $\bl{\mathbf{ 0.8675 / -0.2790 }}$ & $\bl{\mathbf{ 0.8679 / -0.2783 }}$ & $\mathbf{ 0.8700 / -0.2747 }$
\\ \hline$ 0.6000 $ & $ -0.1095 $ & $ -0.1105 $ & $\bl{\mathbf{ 0.8573 / -0.2966 }}$ & $\bl{\mathbf{ 0.8577 / -0.2959 }}$ & $\mathbf{ 0.8572 / -0.2968 }$
\\ \hline$ 0.7000 $ & $ -0.1347 $ & $ -0.1337 $ & $\bl{\mathbf{ 0.8448 / -0.3185 }}$ & $\bl{\mathbf{ 0.8453 / -0.3175 }}$ & $\mathbf{ 0.8467 / -0.3152 }$
\\ \hline$ 0.8000 $ & $ -0.1680 $ & $ -0.1637 $ & $\bl{\mathbf{ 0.8295 / -0.3457 }}$ & $\bl{\mathbf{ 0.8302 / -0.3445 }}$ & $\mathbf{ 0.8308 / -0.3434 }$
\\ \hline$ 0.9000 $ & $ -0.2064 $ & $ -0.2066 $ & $\bl{\mathbf{ 0.8107 / -0.3800 }}$ & $\bl{\mathbf{ 0.8114 / -0.3786 }}$ & $\mathbf{ 0.8146 / -0.3727 }$
\\ \hline \hline
\end{tabular}
\label{tab:liftedsmin1xnorm1psi}
\end{table}

%%%%%%%%%%%%%%%%%%%%%%%%%%%%%%%%%%%%%%%%%%%%%%%%%%%%%%%%%%%%%%%%%
\subsection{$\beta\rightarrow \infty$}
\label{sec:betainf}
%%%%%%%%%%%%%%%%%%%%%%%%%%%%%%%%%%%%%%%%%%%%%%%%%%%%%%%%%%%%%%%%%

There are many interesting consequences of the above lifting principle. Here we select a particular one that connects to some of the comparison principles that we successively utilized in e.g. \cite{StojnicLiftStrSec13,StojnicMoreSophHopBnds10,StojnicRicBnds13}. Let us assume that $\beta$ is large, say $\beta\rightarrow\infty$ and that we have the following scaling $c_3\leftarrow \frac{c^{(s)}_3}{\beta}$, where $c^{(s)}_3$ is a finite positive real number. Then we have $c_3(1-c_3)\geq 0$ and since $\|\u^{(i,1)}\sqrt{t}+\u^{(2)}\sqrt{1-t}\|_2=B^{(i)}$ we have $\frac{\psi_*(\calX,\beta,s,c_3,t)}{dt}\leq 0$ and function $\psi_*(\calX,\beta,s,c_3,t)$ is decreasing in $t$. In other words, we have the following corollary of Theorem \ref{thm:thm2}.
\begin{corollary}\label{cor:liftcor2}
  Assume the setup of Theorem \ref{thm:thm2}. Let $c_3\leftarrow \frac{c^{(s)}_3}{\beta}$, where $c^{(s)}_3$ is a finite positive real number. Then $\psi_*(\calX,\beta,s,c_3,t)$ is decreasing in $t$ and the following comparison principle holds
\begin{eqnarray}\label{eq:liftco2eq2}
\lim_{\beta\rightarrow\infty}\psi_*(\calX,\beta,s,\frac{c^{(s)}_3}{\beta},0) \geq \lim_{\beta\rightarrow\infty} \psi_*(\calX,\beta,s,\frac{c^{(s)}_3}{\beta},t)\geq \lim_{\beta\rightarrow\infty}\psi_*(\calX,\beta,s,\frac{c^{(s)}_3}{\beta},1).
\end{eqnarray}
\end{corollary}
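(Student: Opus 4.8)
The plan is to derive Corollary \ref{cor:liftcor2} directly from Theorem \ref{thm:thm2} by substituting the scaling $c_3 \leftarrow \frac{c^{(s)}_3}{\beta}$ into the closed-form derivative expression \eqref{eq:liftconalt7}. First I would observe that under this substitution the crucial sign-governing prefactor becomes $\beta^2 c_3(1-c_3) = \beta^2 \cdot \frac{c^{(s)}_3}{\beta}\lp 1 - \frac{c^{(s)}_3}{\beta}\rp = c^{(s)}_3\lp \beta - c^{(s)}_3\rp$, which is positive for all $\beta > c^{(s)}_3$; since $c^{(s)}_3$ is a fixed finite positive real, this holds for all sufficiently large $\beta$, in particular in the limit $\beta\to\infty$. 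Thus the overall sign of $\frac{d\psi_*(\calX,\beta,s,c_3,t)}{dt}$ is controlled entirely by the sign of the remaining factor.

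Next I would argue that the remaining factor is nonnegative. For each pair $(i,p)$ the term $\|\x^{(i)}\|_2\|\x^{(p)}\|_2 - (\x^{(i)})^T\x^{(p)} \geq 0$ by the Cauchy--Schwarz inequality, and $\frac{A^{(i)}_*A^{(p)}_*}{Z^{(2-c_3)}_*} \geq 0$ since $A^{(i)}_*, Z_* > 0$. The one genuinely nontrivial piece is showing
\[
1 - \frac{(\u^{(i,1)}\sqrt{t}+\u^{(2)}\sqrt{1-t})^T(\u^{(p,1)}\sqrt{t}+\sqrt{1-t}\u^{(2)})}{B^{(i)}B^{(p)}} \geq 0,
\]
but this is exactly the observation already recorded just before the statement of Theorem \ref{thm:thm1}: by definition $\|\u^{(i,1)}\sqrt{t}+\u^{(2)}\sqrt{1-t}\|_2 = B^{(i)}$ (and likewise for $p$), so the fraction is a normalized inner product of two unit vectors and is therefore at most $1$ in absolute value by Cauchy--Schwarz. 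Hence every summand in \eqref{eq:liftconalt7} is $\leq 0$ once multiplied by the negative sign out front, and combined with the positivity of $c^{(s)}_3(\beta - c^{(s)}_3)$ for large $\beta$ we get $\frac{d\psi_*(\calX,\beta,s,c_3,t)}{dt}\leq 0$, i.e.\ $\psi_*(\calX,\beta,s,\frac{c^{(s)}_3}{\beta},t)$ is decreasing in $t$.

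Finally I would conclude the chain of inequalities \eqref{eq:liftco2eq2} by integrating: monotone decrease in $t$ on $[0,1]$ gives $\psi_*(\cdot,0) \geq \psi_*(\cdot,t) \geq \psi_*(\cdot,1)$ for each fixed $\beta$ large enough, and then taking $\lim_{\beta\to\infty}$ preserves the (non-strict) inequalities termwise. This mirrors exactly the argument used to pass from Theorem \ref{thm:thm1} to its Corollary. The only subtlety worth a sentence is that the monotonicity is asserted for finite $\beta$ only in the regime $\beta > c^{(s)}_3$, which is harmless since we only care about the $\beta\to\infty$ limit; no uniformity in $\beta$ is needed because the inequalities are taken inside the limit. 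I do not expect any real obstacle here — the work was all done in establishing \eqref{eq:liftconalt7} — the only thing to be careful about is tracking that the $\beta^2$ from the derivative formula and the $1/\beta$ from the scaling combine to leave a factor that is eventually positive rather than vanishing or flipping sign.
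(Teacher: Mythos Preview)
Your proposal is correct and follows essentially the same route as the paper: the paper's proof reads ``Follows trivially by the above arguments,'' where those arguments are exactly the observation that under the scaling $c_3=c^{(s)}_3/\beta$ one has $c_3(1-c_3)\geq 0$ (for $\beta$ large), combined with the already-established fact that $\|\u^{(i,1)}\sqrt{t}+\u^{(2)}\sqrt{1-t}\|_2=B^{(i)}$ forces the normalized inner product to be at most $1$, so each summand in \eqref{eq:liftconalt7} is nonpositive. Your write-up is slightly more explicit about the Cauchy--Schwarz step and the threshold $\beta>c^{(s)}_3$, but the substance is identical.
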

\begin{proof}
  Follows trivially by the above arguments.
\end{proof}
Now, it is often of particular interest to study the following limiting behavior of $\xi_*(\calX,\beta,s,\frac{c^{(s)}_3}{\beta})$, i.e.
\begin{eqnarray}\label{eq:liftbetainf1}
 \log\lim_{\beta\rightarrow\infty} \xi_*(\calX,\beta,s,\frac{c^{(s)}_3}{\beta})
& = & \log\lim_{\beta\rightarrow\infty} \mE_{G,u^{(4)}}\lp \sum_{i=1}^{l}e^{\beta\lp s\|
 G\x^{(i)}\|_2+\|\x^{(i)}\|_2u^{(4)}\rp} \rp^{\frac{c^{(s)}_3}{\beta}}\nonumber \\
 & = &  \log \mE_{G,u^{(4)}}\lp e^{c^{(s)}_3\lp \max_{\x^{(i)}\in\calX}\lp s\|
 G\x^{(i)}\|_2\rp+\|\x^{(i)}\|_2u^{(4)}\rp} \rp.\nonumber \\
\end{eqnarray}

%%%%%%%%%%%%%%%%%%%%%%%%%%%%%%%%%%%%%%%%%%%%%%%%%%%%%%%%%%%%%%%%%
\subsubsection{$s=1$ -- a lifted Slepian's comparison principle}
\label{sec:liftbetainfsplus1}
%%%%%%%%%%%%%%%%%%%%%%%%%%%%%%%%%%%%%%%%%%%%%%%%%%%%%%%%%%%%%%%%%

In particular when $s=1$ we have
\begin{eqnarray}\label{eq:liftbetainfsplus1}
 \log\lim_{\beta\rightarrow\infty} \xi_*(\calX,\beta,1,\frac{c^{(s)}_3}{\beta})
 & = &  \log \mE_{G,u^{(4)}}\lp e^{c^{(s)}_3\lp \max_{\x^{(i)}\in\calX}\lp \|
 G\x^{(i)}\|_2\rp+\|\x^{(i)}\|_2u^{(4)}\rp} \rp\nonumber \\
\end{eqnarray}
Recalling that $\xi_*(\calX,\beta,1,\frac{c^{(s)}_3}{\beta})=\psi_*(\calX,\beta,1,\frac{c^{(s)}_3}{\beta},1)$ and utilizing what we presented above one obtains
\begin{eqnarray}\label{eq:liftbetainfsplus2}
\log \mE_{G,u^{(4)}}\lp e^{c^{(s)}_3\lp \max_{\x^{(i)}\in\calX}\lp \|
 G\x^{(i)}\|_2\rp+\|\x^{(i)}\|_2u^{(4)}\rp} \rp & = &
 \log\lim_{\beta\rightarrow\infty} \psi_*(\calX,\beta,1,\frac{c^{(s)}_3}{\beta},1)\nonumber \\
& \leq &    \log\lim_{\beta\rightarrow\infty} \psi_*(\calX,\beta,1,\frac{c^{(s)}_3}{\beta},t) \nonumber \\
& \leq &    \log\lim_{\beta\rightarrow\infty} \psi_*(\calX,\beta,1,\frac{c^{(s)}_3}{\beta},0) \nonumber \\
& = &
\log \mE_{\u^{(2)},\h}\lp e^{c^{(s)}_3\lp \max_{\x^{(i)}\in\calX}\lp \|\x^{(i)}\|_2\|\u^{(2)}\|_2+\h^T\x^{(i)}\rp\rp} \rp.\nonumber \\
\end{eqnarray}
Connecting beginning and end in (\ref{eq:liftbetainfsplus2}) one obtains exactly the same as the comparison principle that we utilized in \cite{StojnicMoreSophHopBnds10}. Namely, in \cite{StojnicMoreSophHopBnds10}, we relied on a Gordon's upgrade of the Slepian's principle to obtain
\begin{eqnarray}\label{eq:liftbetainfsplus3}
\log \mE_{G,u^{(4)}} e^{c^{(s)}_3\max_{\x^{(i)}\in \calX} \lp \|
 G\x^{(i)}\|_2+\|\x^{(i)}\|_2u^{(4)}\rp} & = &
 \log \mE_{G,u^{(4)}} e^{c^{(s)}_3\max_{\x^{(i)}\in \calX,\y\in S^{m-1}} \lp \y^T G\x^{(i)}+\|\x^{(i)}\|_2u^{(4)}\rp}\nonumber \\
 & \leq & \log \mE_{\u^{(2)},\h} e^{c^{(s)}_3\max_{\x^{(i)}\in \calX,\y\in S^{m-1}} \lp \|\x^{(i)}\|_2\y^T \u^{(2)}+\h^T\x^{(i)}\rp}\nonumber \\
& = & \log  \mE_{\u^{(2)},\h} e^{c^{(s)}_3\max_{\x^{(i)}\in \calX} \lp \|\x^{(i)}\|_2\|\u^{(2)}\|_2+\h^T\x^{(i)}\rp}.
\end{eqnarray}
In a way one can think of (\ref{eq:liftbetainfsplus1}) and (\ref{eq:liftbetainfsplus2}) as being a lifted Slepian comparison principle. This lifting procedure often offers a solid improvement over the standard Slepian's comparison (see, e.g. \cite{StojnicMoreSophHopBnds10}). In fact, it is often the only known tool that can outmatch strategies based on the original Slepian's principle. As mentioned above, in \cite{StojnicMoreSophHopBnds10}, we created powerful comparison results relying on (\ref{eq:liftbetainfsplus2}) which we obtained relying on a Gordon's upgrade \cite{Gordon85} of the Slepian's max principle. As the above shows, this form is only a special case of a much stronger concept introduced in Theorem \ref{thm:thm2}.

\textbf{\underline{\emph{Numerical results}}}

In Figure \ref{fig:liftedbetainfsplus1xnorm1psi} and Table \ref{tab:liftedbetainfsplus1xnorm1psi} a few simulated results to complement the above theoretical results related to the lifting procedure. All parameters are again the same as earlier, with the exception that now $\beta=10$, which again in a way emulates $\beta\rightarrow\infty$, and $c_3=.03$. Both, Figure \ref{fig:liftedbetainfsplus1xnorm1psi} and Table \ref{tab:liftedbetainfsplus1xnorm1psi}, again demonstrate a solid agreement between all the presented results with $\beta=10$ being a solid approximation of $\beta\rightarrow\infty$ (to in a way ensure validity of comparison, the values for $\lim_{\beta\rightarrow\infty}\psi_*$ were obtained with $c_3^{(s)}=c_3\beta$; here $\beta=10$ and $c_3=.03$). In the right part of the figure, we also added how the obtained results compare to the same scenario with no lifting (a natural comparison goes through the adjusted $\psi_*(\cdot)$ discussed earlier). One can observe a bit of the flattening effect which is a consequence of the lifting procedure and in turn tightens the corresponding comparisons from Section \ref{sec:gencon}. This effect will be way more pronounced in the example that follows below.

%\begin{figure}[htb]
%%\begin{minipage}[b]{.5\linewidth}
%\centering
%\centerline{\epsfig{figure=splus1xnorm1beta10psiPAP.eps,width=11.5cm,height=8cm}}
%%\end{minipage}
%%\begin{minipage}[b]{.5\linewidth}
%%\centering
%%\centerline{\epsfig{figure=finprerral08.eps,width=9cm,height=6.5cm}}
%%\end{minipage}
%\caption{$\psi_*(\calX,\beta,s,c_3,t)$ as a function of $t$; $m=5$, $n=5$, $l=10$, $\calX=\calX^{+}$, $\beta=10$, $s=1$}
%\label{fig:liftedsplus1xnorm1psi}
%\end{figure}

\begin{figure}[htb]
\begin{minipage}[b]{.5\linewidth}
\centering
\centerline{\epsfig{figure=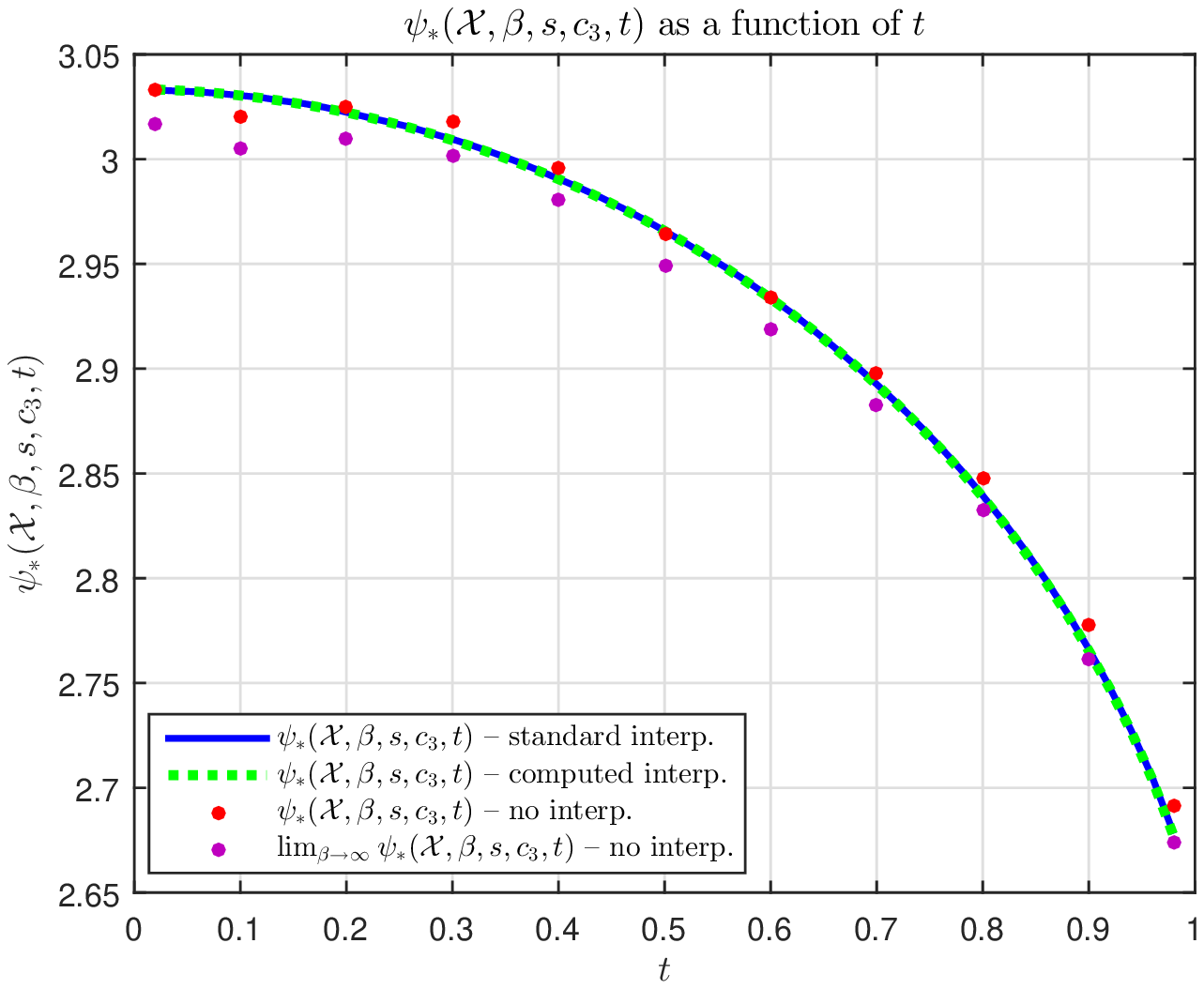,width=9cm,height=7cm}}
%\end{minipage}
%\begin{minipage}[b]{.5\linewidth}
%\centering
%\centerline{\epsfig{figure=finprerral08.eps,width=9cm,height=6.5cm}}
\end{minipage}
\begin{minipage}[b]{.5\linewidth}
\centering
\centerline{\epsfig{figure=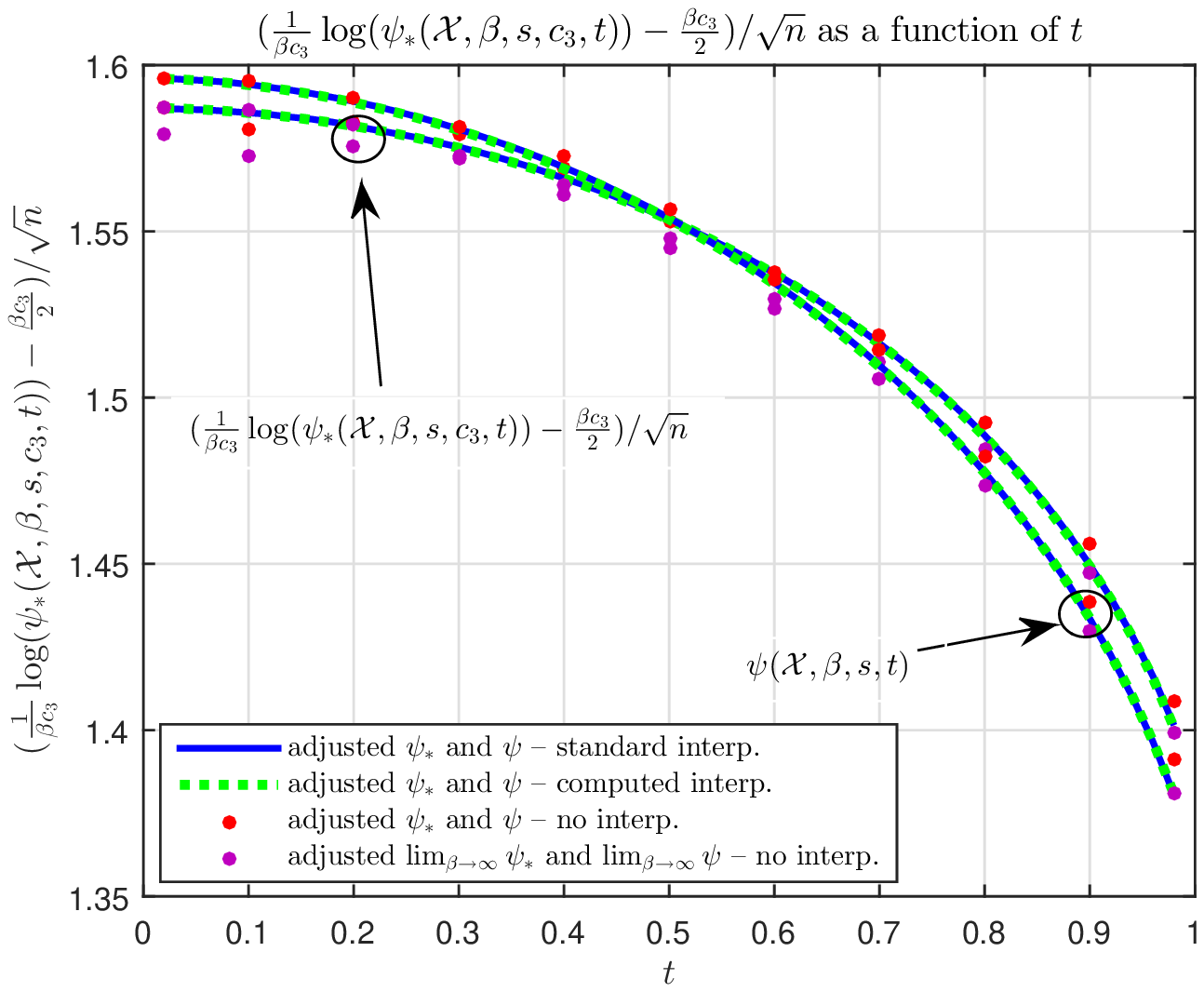,width=9cm,height=7cm}}
%\end{minipage}
%\begin{minipage}[b]{.5\linewidth}
%\centering
%\centerline{\epsfig{figure=finprerral08.eps,width=9cm,height=6.5cm}}
\end{minipage}
\caption{Left -- $\psi_*(\calX,\beta,s,c_3,t)$ as a function of $t$; $m=5$, $n=5$, $l=10$, $\calX=\calX^{+}$, $\beta=10$, $s=1$, $c_3=.03$; right -- comparison between adjusted $\psi_*(\calX,\beta,s,c_3,t)$ and $\psi(\calX,\beta,s,t)$ for $\beta=10$ (lifting versus no-lifting)}
\label{fig:liftedbetainfsplus1xnorm1psi}
\end{figure}

\begin{table}[h]
\caption{Simulated results --- $m=5$, $n=5$, $l=10$, $\calX=\calX^{+}$, $\beta=10$, $s=1$, $c_3=.03$}\vspace{.1in}
\hspace{-0in}\centering
{\small
\begin{tabular}{||c||c|c|c|c|c|c||}
\hline\hline
$ t$  &  $\frac{d\psi_*}{dt}$; (\ref{eq:liftanal11}) & $\frac{d\psi_*}{dt}$;  (\ref{eq:liftconalt7}) & $\psi_*$;  (\ref{eq:liftanal11}) and (\ref{eq:liftco1eq1}) & $\psi$; (\ref{eq:liftconalt7}) and (\ref{eq:liftco1eq1}) & $\psi_*$;  (\ref{eq:liftanal8})& $\lim_{\beta\rightarrow\infty}\psi_*$;  (\ref{eq:liftanal8})\\  \hline\hline
$ 0.1 $ & $ -0.0479 $ & $ -0.0518 $ & $\bl{\mathbf{ 3.0304 / 1.5857 }}$ & $\bl{\mathbf{ 3.0304 / 1.5857 }}$ & $\mathbf{ 3.0204 / 1.5808 }$& $\prp{\mathbf{ 3.0047 / 1.5730 }}$\\ \hline
$ 0.2 $ & $ -0.1145 $ & $ -0.0993 $ & $\bl{\mathbf{ 3.0224 / 1.5817 }}$ & $\bl{\mathbf{ 3.0223 / 1.5817 }}$ & $\mathbf{ 3.0251 / 1.5831 }$& $\prp{\mathbf{ 3.0093 / 1.5753 }}$\\ \hline
$ 0.3 $ & $ -0.1411 $ & $ -0.1547 $ & $\bl{\mathbf{ 3.0095 / 1.5754 }}$ & $\bl{\mathbf{ 3.0091 / 1.5751 }}$ & $\mathbf{ 3.0177 / 1.5794 }$& $\prp{\mathbf{ 3.0019 / 1.5716 }}$\\ \hline
$ 0.4 $ & $ -0.2146 $ & $ -0.2079 $ & $\bl{\mathbf{ 2.9906 / 1.5660 }}$ & $\bl{\mathbf{ 2.9905 / 1.5659 }}$ & $\mathbf{ 2.9962 / 1.5687 }$& $\prp{\mathbf{ 2.9808 / 1.5611 }}$\\ \hline
$ 0.5 $ & $ -0.2839 $ & $ -0.2748 $ & $\bl{\mathbf{ 2.9658 / 1.5535 }}$ & $\bl{\mathbf{ 2.9658 / 1.5535 }}$ & $\mathbf{ 2.9644 / 1.5528 }$& $\prp{\mathbf{ 2.9491 / 1.5451 }}$\\ \hline
$ 0.6 $ & $ -0.3462 $ & $ -0.3522 $ & $\bl{\mathbf{ 2.9339 / 1.5374 }}$ & $\bl{\mathbf{ 2.9335 / 1.5372 }}$ & $\mathbf{ 2.9345 / 1.5377 }$& $\prp{\mathbf{ 2.9192 / 1.5299 }}$\\ \hline
$ 0.7 $ & $ -0.4512 $ & $ -0.4511 $ & $\bl{\mathbf{ 2.8926 / 1.5163 }}$ & $\bl{\mathbf{ 2.8924 / 1.5162 }}$ & $\mathbf{ 2.8976 / 1.5188 }$& $\prp{\mathbf{ 2.8824 / 1.5110 }}$\\ \hline
$ 0.8 $ & $ -0.5984 $ & $ -0.5980 $ & $\bl{\mathbf{ 2.8395 / 1.4886 }}$ & $\bl{\mathbf{ 2.8387 / 1.4882 }}$ & $\mathbf{ 2.8475 / 1.4928 }$& $\prp{\mathbf{ 2.8320 / 1.4847 }}$\\ \hline
$ 0.9 $ & $ -0.8504 $ & $ -0.8428 $ & $\bl{\mathbf{ 2.7661 / 1.4496 }}$ & $\bl{\mathbf{ 2.7654 / 1.4492 }}$ & $\mathbf{ 2.7775 / 1.4557 }$& $\prp{\mathbf{ 2.7615 / 1.4472 }}$
\\ \hline\hline
\end{tabular}
}
\label{tab:liftedbetainfsplus1xnorm1psi}
\end{table}

%%%%%%%%%%%%%%%%%%%%%%%%%%%%%%%%%%%%%%%%%%%%%%%%%%%%%%%%%%%%%%%%%
\subsubsection{$s=-1$ -- a lifted Gordon's comparison principle}
\label{sec:betainfsminus1}
%%%%%%%%%%%%%%%%%%%%%%%%%%%%%%%%%%%%%%%%%%%%%%%%%%%%%%%%%%%%%%%%%

When $s=-1$ we have
\begin{eqnarray}\label{eq:liftbetainfsmin1}
 \log\lim_{\beta\rightarrow\infty} \xi_*(\calX,\beta,-1,\frac{c^{(s)}_3}{\beta})
 & = &  \log \mE_{G,u^{(4)}}\lp e^{c^{(s)}_3\lp \max_{\x^{(i)}\in\calX}\lp -\|
 G\x^{(i)}\|_2\rp+\|\x^{(i)}\|_2u^{(4)}\rp} \rp\nonumber \\
\end{eqnarray}
Utilizing what we presented above while again keeping in mind that $\xi_*(\calX,\beta,-1,\frac{c^{(s)}_3}{\beta})=\psi_*(\calX,\beta,-1,\frac{c^{(s)}_3}{\beta},1)$ we obtain
\begin{eqnarray}\label{eq:liftbetainfsmin2}
\log \mE_{G,u^{(4)}}\lp e^{c^{(s)}_3\lp \max_{\x^{(i)}\in\calX}\lp -\|
 G\x^{(i)}\|_2\rp+\|\x^{(i)}\|_2u^{(4)}\rp} \rp & = &
 \log\lim_{\beta\rightarrow\infty} \psi_*(\calX,\beta,-1,\frac{c^{(s)}_3}{\beta},1)\nonumber \\
& \leq  &    \log\lim_{\beta\rightarrow\infty} \psi_*(\calX,\beta,-1,\frac{c^{(s)}_3}{\beta},t) \nonumber \\
& \leq  &    \log\lim_{\beta\rightarrow\infty} \psi_*(\calX,\beta,-1,\frac{c^{(s)}_3}{\beta},0) \nonumber \\
& = &
\log \mE_{\u^{(2)},\h}\lp e^{c^{(s)}_3\lp \max_{\x^{(i)}\in\calX}\lp -\|\x^{(i)}\|_2\|\u^{(2)}\|_2+\h^T\x^{(i)}\rp\rp} \rp.\nonumber \\
\end{eqnarray}
Connecting beginning and end in (\ref{eq:liftbetainfsmin2}) we again observe exactly the same inequality as in the comparison principle we utilized in \cite{StojnicMoreSophHopBnds10} (as well as in e.g. \cite{StojnicLiftStrSec13,StojnicRicBnds13}). Namely, in \cite{StojnicMoreSophHopBnds10}, we relied on a Gordon's minmax principle to obtain
\begin{eqnarray}\label{eq:liftbetainfsmin2}
\log \mE_{G,u^{(4)}} e^{c^{(s)}_3\max_{\x^{(i)}\in \calX} \lp -\|
 G\x^{(i)}\|_2+\|\x^{(i)}\|_2u^{(4)}\rp} & = &
 \log \mE_{G,u^{(4)}} e^{c^{(s)}_3\max_{\x^{(i)}\in \calX}\min_{\y\in S^{m-1}} \lp \y^T G\x^{(i)}+\|\x^{(i)}\|_2u^{(4)}\rp}\nonumber \\
 & \leq & \log \mE_{\u^{(2)},\h} e^{c^{(s)}_3\max_{\x^{(i)}\in \calX}\min_{\y\in S^{m-1}} \lp \|\x^{(i)}\|_2\y^T \u^{(2)}+\h^T\x^{(i)}\rp}\nonumber \\
& = & \log  \mE_{\u^{(2)},\h} e^{c^{(s)}_3\max_{\x^{(i)}\in \calX} \lp -\|\x^{(i)}\|_2\|\u^{(2)}\|_2+\h^T\x^{(i)}\rp}.
\end{eqnarray}
Similarly to what we had above, one can think of (\ref{eq:liftbetainfsmin1}) and (\ref{eq:liftbetainfsmin2}) as being a lifted Gordon's minmax comparison principle. The improvement that this lifting idea brings in studying hard optimization problems is often also rather massive (see, e.g. \cite{StojnicMoreSophHopBnds10,StojnicLiftStrSec13,StojnicRicBnds13}). In \cite{StojnicMoreSophHopBnds10}, we created powerful comparison results relying on (\ref{eq:liftbetainfsmin2}) which we obtained relying on a Gordon's minmax principle \cite{Gordon85}. Clearly, this form is only a special case of a much stronger concept introduced in Theorem \ref{thm:thm2}.

When $\x^{(i)}=1,1\leq i\leq l$, we have a particularly elegant consequence of the above even for any $\beta$ and any sign $s$
\begin{eqnarray}\label{eq:liftbetainfsmin3}
\frac{(c_3^{(s)})^2}{2}+c_3^{(s)}\mE_{G} \max_{\x^{(i)}\in \calX} \lp s\|
 G\x^{(i)}\|_2\rp & = & \frac{(c_3^{(s)})^2}{2}+ \mE_{G} \log e^{c^{(s)}_3\max_{\x^{(i)}\in \calX} \lp s\|
 G\x^{(i)}\|_2\rp} \nonumber \\
&\leq & \log \mE_{G,u^{(4)}} e^{c^{(s)}_3\max_{\x^{(i)}\in \calX} \lp s\|
 G\x^{(i)}\|_2+u^{(4)}\rp} \nonumber \\
&\leq & \log \mE_{G,u^{(4)}} \lp \sum_{i=1}^{l}e^{\beta  \lp s\|
 G\x^{(i)}\|_2+u^{(4)}\rp}\rp^{c_3}  \nonumber \\
&= & \log \mE_{G,u^{(4)}} \psi_*(\calX,\beta,s,c_3,1)  \nonumber \\
&\leq & \log \mE_{G,u^{(4)}} \psi_*(\calX,\beta,s,c_3,t)  \nonumber \\
&\leq & \log \mE_{G,u^{(4)}} \psi_*(\calX,\beta,s,c_3,0)  \nonumber \\
& = & \log \mE_{u^{(2)},\h} \lp \sum_{i=1}^{l}e^{\beta  \lp s\|
 \u^{(2)}\|_2+\h^T\x^{(i)}\rp}\rp^{c_3}.
\end{eqnarray}
In other words, we have
\begin{eqnarray}\label{eq:liftbetainfsmin3a}
\mE_{G} \max_{\x^{(i)}\in \calX} \lp s\|
 G\x^{(i)}\|_2\rp & \leq & \frac{1}{c_3^{(s)}} \log \mE_{G,u^{(4)},u^{(2)},\h} \lp \sum_{i=1}^{l}e^{\beta  \lp s\|
 G\x^{(i)}+\sqrt{1-t}\u^{(2)}\|_2+\sqrt{t}u^{(4)}+\h^T\x^{(i)}\rp}\rp^{c_3}-\frac{c_3^{(s)}}{2} \nonumber \\
& = & \frac{1}{\beta c_3} \log\mE_{G,u^{(4)},u^{(2)},\h} \lp \sum_{i=1}^{l}e^{\beta  \lp s\|
 G\x^{(i)}+\sqrt{1-t}\u^{(2)}\|_2+\sqrt{t}u^{(4)}+\h^T\x^{(i)}\rp}\rp^{c_3}-\frac{\beta c_3}{2}.\nonumber \\
\end{eqnarray}
For $t=1$, (\ref{eq:liftbetainfsmin3a}) becomes
\begin{eqnarray}\label{eq:liftbetainfsmin4}
\mE_{G} \max_{\x^{(i)}\in \calX} \lp s\|
 G\x^{(i)}\|_2\rp & \leq & \frac{1}{c_3^{(s)}} \log \mE_{u^{(2)},\h} \lp \sum_{i=1}^{l}e^{\beta  \lp s\|
 \u^{(2)}\|_2+\h^T\x^{(i)}\rp}\rp^{c_3}-\frac{c_3^{(s)}}{2} \nonumber \\
& = & \frac{1}{\beta c_3} \log \mE_{u^{(2)},\h} \lp \sum_{i=1}^{l}e^{\beta  \lp s\|
 \u^{(2)}\|_2+\h^T\x^{(i)}\rp}\rp^{c_3}-\frac{\beta c_3}{2}.
\end{eqnarray}
Of course, finally, for $\beta\rightarrow\infty$ (and $c_3=\frac{c_3^{(2)}}{\beta}$) we have
\begin{eqnarray}\label{eq:liftbetainfsmin5}
\mE_{G} \max_{\x^{(i)}\in \calX} \lp s\|
 G\x^{(i)}\|_2\rp  \leq  \frac{1}{c_3^{(s)}} \log \mE_{u^{(2)},\h} \lp e^{c_3^{(s)}\max_{\x^{(i)}\in\calX} \lp s\|
 \u^{(2)}\|_2+\h^T\x^{(i)}\rp}\rp-\frac{c_3^{(s)}}{2},
\end{eqnarray}
basically one of the key components of the mechanisms we introduced and utilized in \cite{StojnicMoreSophHopBnds10,StojnicLiftStrSec13,StojnicRicBnds13}.

\textbf{\underline{\emph{Numerical results}}}

In Figure \ref{fig:liftedbetainfsmin1xnorm1psi} and Table \ref{tab:liftedbetainfsmin1xnorm1psi} simulated results are shown. All parameters are again the same as earlier, with the exception that now $\beta=10$ which again in a way emulates $\beta\rightarrow\infty$ and $c_3=.1$.
Both, Figure \ref{fig:liftedbetainfsmin1xnorm1psi} and Table \ref{tab:liftedbetainfsmin1xnorm1psi}, again demonstrate a solid agreement between all the presented results with $\beta=10$ being a pretty good approximation of $\beta\rightarrow\infty$. In particular, the right part of the figure shows a more pronounced flattening effect as a consequence of the lifting procedure. This of course tightens the corresponding comparisons from Section \ref{sec:gencon}.

%\begin{figure}[htb]
%%\begin{minipage}[b]{.5\linewidth}
%\centering
%\centerline{\epsfig{figure=splus1xnorm1beta10psiPAP.eps,width=11.5cm,height=8cm}}
%%\end{minipage}
%%\begin{minipage}[b]{.5\linewidth}
%%\centering
%%\centerline{\epsfig{figure=finprerral08.eps,width=9cm,height=6.5cm}}
%%\end{minipage}
%\caption{$\psi_*(\calX,\beta,s,c_3,t)$ as a function of $t$; $m=5$, $n=5$, $l=10$, $\calX=\calX^{+}$, $\beta=10$, $s=1$}
%\label{fig:liftedsplus1xnorm1psi}
%\end{figure}

\begin{figure}[htb]
\begin{minipage}[b]{.5\linewidth}
\centering
\centerline{\epsfig{figure=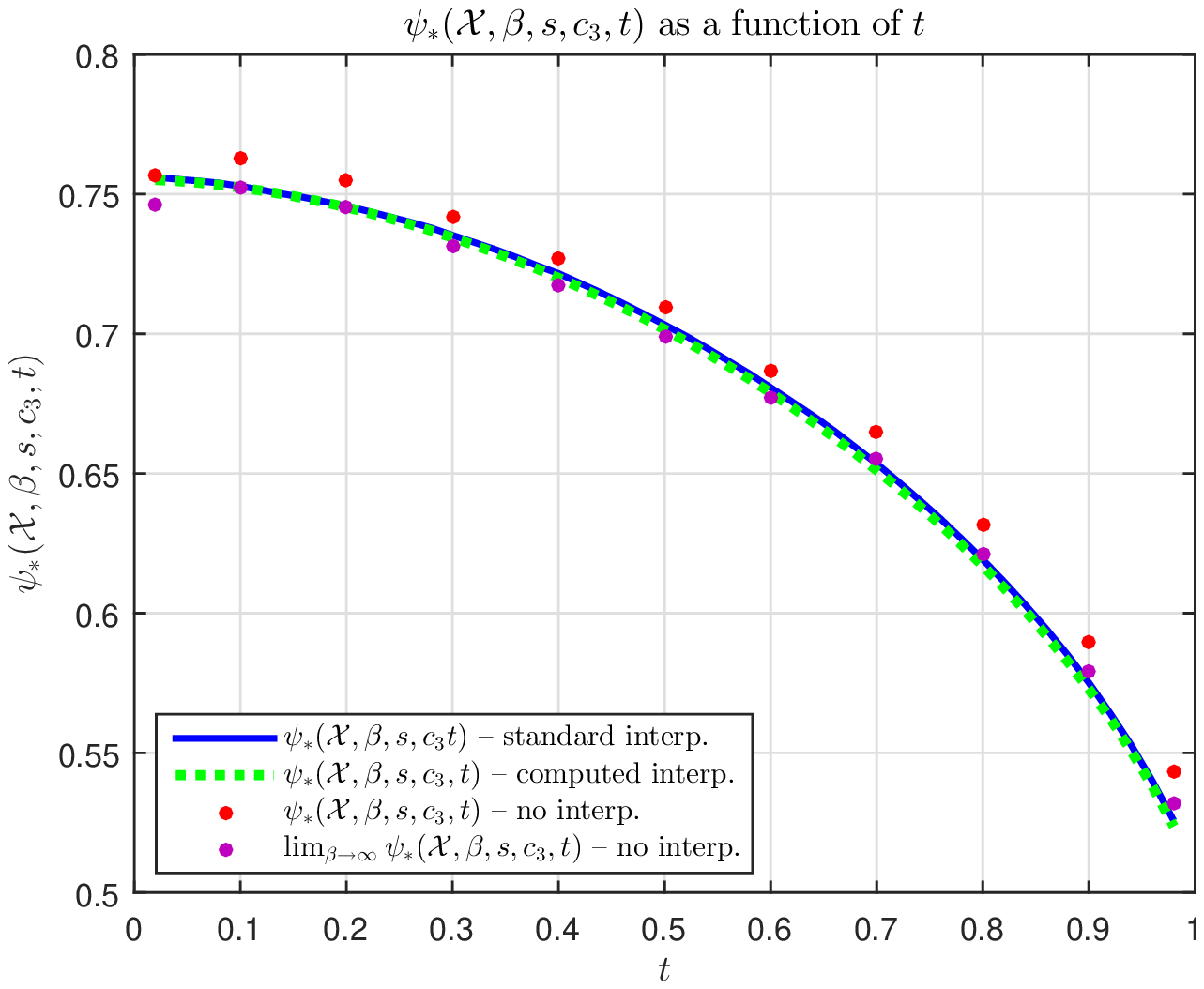,width=9cm,height=7cm}}
%\end{minipage}
%\begin{minipage}[b]{.5\linewidth}
%\centering
%\centerline{\epsfig{figure=finprerral08.eps,width=9cm,height=6.5cm}}
\end{minipage}
\begin{minipage}[b]{.5\linewidth}
\centering
\centerline{\epsfig{figure=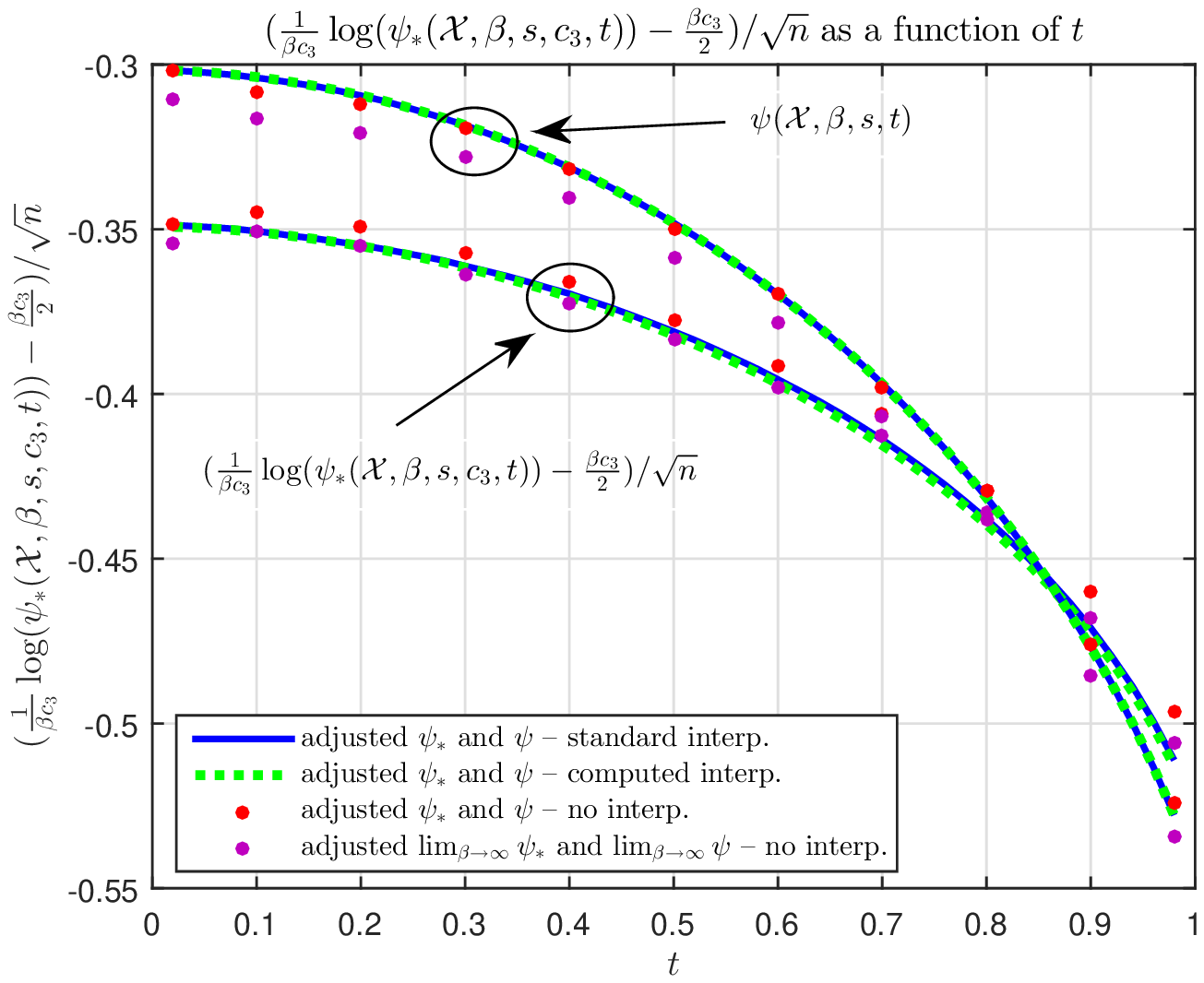,width=9cm,height=7cm}}
%\end{minipage}
%\begin{minipage}[b]{.5\linewidth}
%\centering
%\centerline{\epsfig{figure=finprerral08.eps,width=9cm,height=6.5cm}}
\end{minipage}
\caption{Left -- $\psi_*(\calX,\beta,s,c_3,t)$ as a function of $t$; $m=5$, $n=5$, $l=10$, $\calX=\calX^{+}$, $\beta=10$, $s=-1$, $c_3=.1$; right -- comparison between adjusted $\psi_*(\calX,\beta,s,c_3,t)$ and $\psi(\calX,\beta,s,t)$ for $\beta=10$ (lifting versus no-lifting)}
\label{fig:liftedbetainfsmin1xnorm1psi}
\end{figure}

{\footnotesize
\begin{table}[h]
\caption{Simulated results --- $m=5$, $n=5$, $l=10$, $\calX=\calX^{+}$, $\beta=10$, $s=-1$, $c_3=.1$}\vspace{.1in}
\hspace{-0in}\centering
{\small
\begin{tabular}{||c||c|c|c|c|c|c||}\hline\hline
$ t$  &  $\frac{d\psi_*}{dt}$; (\ref{eq:liftanal11}) & $\frac{d\psi_*}{dt}$;  (\ref{eq:liftconalt7}) & $\psi_*$;  (\ref{eq:liftanal11}) and (\ref{eq:liftco1eq1}) & $\psi$; (\ref{eq:liftconalt7}) and (\ref{eq:liftco1eq1}) & $\psi_*$;  (\ref{eq:liftanal8})& $\lim_{\beta\rightarrow\infty}\psi_*$;  (\ref{eq:liftanal8})\\  \hline\hline
$ 0.1 $ & $ -0.0575 $ & $ -0.0480 $ & $\bl{\mathbf{ 0.7528 / -0.3506 }}$ & $\bl{\mathbf{ 0.7523 / -0.3509 }}$ & $\mathbf{ 0.7627 / -0.3448 }$& $\prp{\mathbf{ 0.7524 / -0.3508 }}$\\ \hline
$ 0.2 $ & $ -0.0746 $ & $ -0.0866 $ & $\bl{\mathbf{ 0.7457 / -0.3548 }}$ & $\bl{\mathbf{ 0.7453 / -0.3551 }}$ & $\mathbf{ 0.7551 / -0.3492 }$& $\prp{\mathbf{ 0.7449 / -0.3553 }}$\\ \hline
$ 0.3 $ & $ -0.1312 $ & $ -0.1244 $ & $\bl{\mathbf{ 0.7353 / -0.3611 }}$ & $\bl{\mathbf{ 0.7344 / -0.3616 }}$ & $\mathbf{ 0.7416 / -0.3573 }$& $\prp{\mathbf{ 0.7315 / -0.3634 }}$\\ \hline
$ 0.4 $ & $ -0.1393 $ & $ -0.1596 $ & $\bl{\mathbf{ 0.7216 / -0.3695 }}$ & $\bl{\mathbf{ 0.7200 / -0.3705 }}$ & $\mathbf{ 0.7270 / -0.3662 }$& $\prp{\mathbf{ 0.7171 / -0.3723 }}$\\ \hline
$ 0.5 $ & $ -0.1963 $ & $ -0.2010 $ & $\bl{\mathbf{ 0.7032 / -0.3810 }}$ & $\bl{\mathbf{ 0.7016 / -0.3821 }}$ & $\mathbf{ 0.7092 / -0.3773 }$& $\prp{\mathbf{ 0.6991 / -0.3837 }}$\\ \hline
$ 0.6 $ & $ -0.2478 $ & $ -0.2464 $ & $\bl{\mathbf{ 0.6808 / -0.3956 }}$ & $\bl{\mathbf{ 0.6789 / -0.3968 }}$ & $\mathbf{ 0.6867 / -0.3917 }$& $\prp{\mathbf{ 0.6767 / -0.3982 }}$\\ \hline
$ 0.7 $ & $ -0.2937 $ & $ -0.3015 $ & $\bl{\mathbf{ 0.6535 / -0.4139 }}$ & $\bl{\mathbf{ 0.6511 / -0.4155 }}$ & $\mathbf{ 0.6649 / -0.4061 }$& $\prp{\mathbf{ 0.6549 / -0.4129 }}$\\ \hline
$ 0.8 $ & $ -0.3713 $ & $ -0.3731 $ & $\bl{\mathbf{ 0.6192 / -0.4380 }}$ & $\bl{\mathbf{ 0.6170 / -0.4396 }}$ & $\mathbf{ 0.6316 / -0.4291 }$& $\prp{\mathbf{ 0.6215 / -0.4363 }}$\\ \hline
$ 0.9 $ & $ -0.4972 $ & $ -0.4954 $ & $\bl{\mathbf{ 0.5751 / -0.4710 }}$ & $\bl{\mathbf{ 0.5728 / -0.4728 }}$ & $\mathbf{ 0.5894 / -0.4600 }$& $\prp{\mathbf{ 0.5789 / -0.4680 }}$
\\ \hline\hline
\end{tabular}
}
\label{tab:liftedbetainfsmin1xnorm1psi}
\end{table}
}

%%%%%%%%%%%%%%%%%%%%%%%%%%%%%%%%%%%%%%%%%%%%%%%%%%%%%%%%%%%%%%%%%%%%%%%%%%%%%%%%
\section{Conclusion}
\label{sec:conc}
%%%%%%%%%%%%%%%%%%%%%%%%%%%%%%%%%%%%%%%%%%%%%%%%%%%%%%%%%%%%%%%%%%%%%%%%%%%%%%%%

In this paper we introduce a series of very powerful statistical comparison results. Two types of comparisons are presented: 1) the general one which relies on the basic analytical properties and 2) the lifted one that relies on a lifting procedure as a way to strengthen the general one. A substantial set of numerical experiments is also presented and a very strong agreement with the theoretical predictions is observed. A particular feature of the presented results is their generality. Although they don't treat extremes of random processes per se, they are strong enough so that they cover these scenarios as well (as we also demonstrated, one can quickly show that they contain as special cases many well known results from the theory of the extremes of random process, including the Slepian's maz and Gordon's minmax principles; in fact, much stronger statement is true, namely, the results that we presented here show that various minmax scenarios are basically just the max ones).

Another important feature of the presented results is that they contain as special cases some of the comparison concepts that we utilized as starting points in solving many hard problems in various areas of mathematics in recent years (in particular, for many of these problems it often happened that no other tool was available even to come close to the characterizations that we provided through the comparison principles).  Moreover, what we present here is derived pretty much while relying only on the axioms and Leibniz-Newton derivative calculus. As quite a few other results that we have created in recent years essentially use as the starting blocks the concepts presented here, that makes the theories that we developed for handling many hard mathematical problems pretty much fully self-contained and, modulo Leibniz-Newton, entirely based on our own work.

Creating a theory while utilizing not much more than just the axioms, establishes a very powerful self-sustainable tool. Its underlying concepts can then serve as major cornerstones for various other extensions, which include both, the core comparison ones and the application oriented ones. We will present many of them separately to ensure that in this introductory paper the clarity and simplicity of the exposition are preserved.

%\newpage1
%\setcounter{page}{1}
\begin{singlespace}
\bibliographystyle{plain}
\bibliography{gscompRefs}
\end{singlespace}

\end{document}